\documentclass{article}
\usepackage{graphicx} 

\usepackage{amsmath, amsthm,    amsfonts, amssymb}
\usepackage{bm}
\usepackage{xcolor}

\usepackage[a4paper, total={19cm, 27cm}]{geometry}

\usepackage{subcaption}
\usepackage{aligned-overset}
\usepackage{hyperref}

\newcommand{\energy}{\mathcal{E}}
\newcommand{\pf}{\varphi}

\newcommand{\strain}[1][u]{{\bm \varepsilon}(\bm #1)}

\newcommand{\Div}{\nabla\cdot}

\newcommand{\calH}{\mathcal{H}}
\DeclareMathOperator*{\argmin}{arg\,min}
\DeclareMathOperator{\tr}{\mathrm{tr}}
\newcommand{\vecSpL}{\bm{L^2(\Omega)}}

\newcommand{\vecSpHTr}{\bm{H_0^1(\Omega)}}

\newcommand{\tensSpL}{\bm{\mathcal{L}^2(\Omega)}}

\usepackage{tikz}
\usepackage{pgfplots,pgfplotstable}
\pgfplotsset{compat=newest}

\newtheorem{lemma}{Lemma}

\newtheorem{theorem}{Theorem}

\theoremstyle{definition}

\theoremstyle{remark}
\newtheorem{remark}{Remark}

\definecolor{blau}{RGB}{0 144 188}
\definecolor{newblue1}{RGB}{0 144 188}
\definecolor{newblue2}{RGB}{197 216 227}
\definecolor{newgreen1}{RGB}{0 144 118}
\definecolor{newgreen2}{RGB}{197 222 215}
\definecolor{neworange1}{RGB}{255 137 0}
\definecolor{neworange2}{RGB}{255 205 105}
\definecolor{newred1}{RGB}{254 54 41}
\definecolor{newred2}{RGB}{141 26  18}
\definecolor{newpurple1}{RGB}{196 19 252}
\definecolor{newpurple2}{RGB}{93 14 117}

\usepackage{authblk}

\providecommand{\keywords}[1]{\quad {\small \textbf{\textit{Keywords:}} #1}}

\usepackage{chngcntr}

\title{On the convergence and efficiency of splitting schemes for the Cahn-Hilliard-Biot model}

\author[1]{Cedric Riethmüller\thanks{Corresponding author: cedric.riethmueller@ians.uni-stuttgart.de}}

\author[2]{Erlend Storvik}

\affil[1]{{\small Institute of Applied Analysis and Numerical Simulation, University of Stuttgart, Germany}}

\affil[2]{{\small Department of Computer science, Electrical engineering and Mathematical sciences, Western Norway University of Applied sciences, Norway}}

\date{}

\begin{document}

\maketitle

\begin{abstract}
In this paper, we present a novel solution strategy for the Cahn-Hilliard-Biot model, a three-way coupled system that features the interplay of solid phase separation, fluid dynamics, and elastic deformations in porous media. It is a phase-field model that combines the Cahn-Hilliard regularized interface equation and Biot's equations of poroelasticity. Solving the system poses significant challenges due to its coupled, nonlinear, and non-convex nature. The main goal of this work is to provide a consistent and efficient solution strategy. With this in mind, we introduce a semi-implicit time discretization such that the resulting discrete system is equivalent to a convex minimization problem. Then, using abstract theory for convex problems, we prove the convergence of an alternating minimization method to the time-discrete system. The solution strategy is relatively flexible in terms of spatial discretization, although we require standard inverse inequalities for the guaranteed convergence of the alternating minimization method. Finally, we perform some numerical experiments that show the promise of the proposed solution strategy, both in terms of efficiency and robustness.
\end{abstract}

\keywords{Cahn-Hilliard, poroelasticity, time discretization, splitting schemes, alternating minimization}

\section{Introduction}

The Cahn-Hilliard-Biot model combines the quasi-static Biot equations for flow in deformable porous media with the Cahn-Hilliard equation for modeling phase-field evolution to get a system that captures fluid flow in deformable porous media with evolving solid phases. There are several relevant applications ranging from the life-sciences with tumor growth, to material science with wood growth and biogrout. The equations were first proposed in \cite{Storvik2022} and since then several contributions have been made to enhance the understanding of the model. The well-posedness of the system has been addressed in \cite{Fritz2024, Garcke2025, Riethmueller2025, Abels2025}. Furthermore, the sharp-interface limit of the system was studied using asymptotic expansions in \cite{Storvik2025}. 

To the authors' knowledge, there exist two works that consider numerical solution strategies to the Cahn-Hilliard-Biot model. In \cite{Brunk2025}, a structure-preserving discretization is presented for a regularized form of the model that incorporates viscoelastic effects. The authors provide theoretical proofs that the discretization is gradient stable and provides global balance in both phase field and volumetric fluid content. Furthermore, a proof that there exists a unique solution to the discrete system is provided in addition to a priori bounds, numerical experiments indeed also show the expected convergence properties of the discretization method.
In \cite{Storvik2024}, the focus is on the solution strategies of the discrete system, which is discussed, mostly by comparative numerical experiments. Splitting strategies to iteratively decouple the system of equations during the solving process were shown to be beneficial in comparison to standard Newton methods, in particular due to the efficiency of the assembly and linear solvers on the subsystems, contrary to the full system. In this paper, we aim to provide a time discretization that is suitable to solve with tailored iterative linearization and decoupling strategies. 

One particularly useful and important trait of the Cahn-Hilliard-Biot equations is that they have a generalized gradient flow structure \cite{Storvik2022}. This means that the state of the system evolves subject to the gradient of the free energy of the system. In \cite{Garcke2025, Riethmueller2025, Abels2025} this was partially used to establish well-posedness for the system, and in \cite{Storvik2023} the same structure was used to find a tailored time discretization to the Cahn-Larch\'e equations, based on the classical convex-concave split of the Cahn-Hilliard equations \cite{Eyre1998}. In the present paper, the goal is to perform a similar discretization to the one in \cite{Storvik2023}, but for the Cahn-Hilliard-Biot equations, leading to a system of discrete equations that is equivalent to a convex minimization problem. 

Utilizing the equivalence between the discrete equations and a convex minimization problem, we aim to apply the abstract theory from \cite{Both2022} to devise an alternating minimization solution strategy to the problem. This strategy will be theoretically proven to converge, and we show that it leads to sequentially, iteratively solving a discrete Cahn-Hilliard-like system and a Biot-like system of equations. From there, several options for numerically approximating the solutions to the discrete subproblems remain. Where the Cahn-Hilliard subsystem seems best fitted to be solved with a standard Newton method at this point, we can either solve the Biot subsystem monolithically, as it at each iteration of the alternating minimization step will be equivalent to a discrete heterogeneous Biot system of equations, or we could use another sequential decoupling strategy, like the fixed-stress splitting scheme \cite{Settari1998, Mikelic2013, Storvik2019}, or the undrained split \cite{Kim2011}. In this paper, we aim to test both options numerically and compare their performance. 

The outline of the paper is as follows. We start by introducing the mathematical formulation of the Cahn-Hilliard-Biot model in Section~\ref{sec:model} along with assumptions on the model parameters. In Section~\ref{sec:solution strategy} we focus on the numerical solution where the considerations are split into the case of state-independent material parameters (cf. Section~\ref{sec:state-independent}) and the state-dependent one (cf. Section~\ref{sec:state-dependent}). Based on a novel semi-implicit time discretization (cf. Section~\ref{sec:alter_min} and Section~\ref{sec:alter_min_sd} for the respective case), we establish convergence proofs for the alternating minimization method. In Section~\ref{sec:experiments} we explore solution algorithms applied to the proposed discrete system, comparing monolithic and splitting strategies, and assess their computational efficiency and robustness in numerical experiments. We end with concluding remarks in Section~{\ref{sec:conclusions}}.

\section{Model formulation} \label{sec:model}

Let $\Omega \subset \mathbb{R}^d$, $d \in \{2,3\}$, be a domain with Lipschitz continuous boundary and $T > 0$ be the final time. We then consider the problem: Find the quintuple $(\pf, \mu, \bm u, \theta, p)$, with phase-field variable $\pf$, chemical potential $\mu$, displacement $\bm u$, volumetric fluid content $\theta$ and pore pressure $p$, such that
\begin{subequations}
\label{eq:model}
\begin{align}
\partial_t \pf - \Div (m(\pf) \nabla \mu) &= R, \label{eq:ch1}\\
\mu + \gamma\left(\ell\Delta\pf-\frac{1}{\ell}\Psi'(\pf)\right) - \delta_\pf\mathcal{E}_\mathrm{e}(\pf, \strain) - \delta_\pf \mathcal{E}_\mathrm{f}(\pf, \strain, \theta) &= 0, \label{eq:ch2}\\
 -\Div\big(\mathbb{C}(\pf)\left(\bm\varepsilon\!\left(\bm u\right)-\mathcal{T}(\pf)\right)-\alpha(\pf) M(\pf)\left(\theta - \alpha(\pf) \Div \bm u\right) \bm I\big) &= {\bm f}, \label{eq:elasticity}\\
\partial_t \theta - \Div \left(\kappa(\pf) \nabla p\right) &= S_\mathrm{f}, \label{eq:flow}\\
p - M(\pf)\left(\theta - \alpha(\pf) \Div \bm u\right) &= 0 \label{eq:pressure}
\end{align}
\end{subequations}
in $\Omega \times [0,T]$, where
\begin{align*}
\delta_\pf\mathcal{E}_\mathrm{e}(\pf, \strain) = - \mathcal{T}'\!(\pf)\!:\!\mathbb{C}(\pf)\big(\bm\varepsilon\!\left(\bm u\right)-\mathcal{T}(\pf)\big)+\frac{1}{2} \big(\strain - \mathcal{T}(\pf)\big)\!:\!\mathbb{C}'\!(\pf)\big(\strain - \mathcal{T}(\pf)\big)
\end{align*}
and
\begin{align*}
\delta_\pf\mathcal{E}_{\mathrm{f}}(\pf, \strain, \theta) = \frac{M'(\pf)}{2} \left(\theta - \alpha(\pf) \Div \bm u\right)^2 - \alpha'(\pf) M(\pf)\left(\theta - \alpha(\pf) \Div \bm u\right) \Div \bm u,
\end{align*}
accompanied by appropriate initial and boundary conditions. Here, the state-dependent (material) parameters are the chemical mobility $m(\pf)$, the permeability $\kappa(\pf)$, the Biot modulus $M(\pf)$ (with integrated fluid compressibility), the Biot-Willis coupling coefficient $\alpha(\pf)$, the stiffness tensor $\mathbb{C}(\pf)$ and the eigenstrain $\mathcal{T}(\pf)$. Moreover, $\gamma > 0$ denotes the surface tension, and $\ell > 0$ is a small regularization parameter that is associated with the width of the diffuse interface. $\Psi(\pf)$ is a double-well potential that penalizes deviations from the pure phases ($\pf = -1$ and $\pf = 1$) 
and $\strain := \frac{1}{2} \left(\nabla \bm u + \nabla \bm u^T\right)$ is the linearized strain tensor (with $\Div \bm u = \tr{\strain}$). Concerning the right-hand side, $R$ is a reaction term, $\bm f$ accounts for body forces, and $S_\mathrm{f}$ is a source term.

We note that the Cahn-Hilliard-Biot model~\eqref{eq:model} obeys a generalized gradient flow structure, cf.~\cite{Storvik2022, Both2019}, with free energy
\begin{align*}
 \energy(\pf, \strain, \theta) = \energy_\mathrm{i}(\pf) + \energy_\mathrm{e}(\pf, \strain) + \energy_\mathrm{f}(\pf, \strain, \theta)
\end{align*}
that is composed by the interfacial energy
\begin{align*}
 \energy_\mathrm{i}(\pf) = \gamma \int_\Omega \frac{1}{\ell} \Psi(\pf) + \frac{\ell}{2} |\nabla \pf|^2 \;dx,
\end{align*}
the elastic energy
\begin{align*}
 \energy_\mathrm{e}(\pf, \strain) = \int_\Omega \frac{1}{2} \big(\bm\varepsilon(\bm u) - \mathcal{T}(\pf)\big) \!:\!\mathbb{C}(\pf) \big(\bm\varepsilon(\bm u) - \mathcal{T}(\pf)\big) \;dx
\end{align*}
and the hydraulic energy
\begin{align*}
 \energy_\mathrm{f}(\pf, \strain, \theta) = \int_\Omega \frac{M(\pf)}{2} (\theta - \alpha(\pf) \Div \bm u)^2 \;dx.
\end{align*}
Moreover, the equations consist of a Cahn-Hilliard-like subsystem \eqref{eq:ch1}-\eqref{eq:ch2}, and a Biot-like subsystem combining linearized elasticity (with momentum balance) \eqref{eq:elasticity}, and single-phase Darcy flow (with mass balance) \eqref{eq:flow}-\eqref{eq:pressure}.

\subsection{Notation}
In this paper, we apply standard notation from functional analysis. Let $L^2(\Omega)$ be the space of square-integrable functions, and $H^1(\Omega)$ the Sobolev space of square-integrable functions with square-integrable derivatives. Furthermore, $\left(H_0^1(\Omega)\right)^d$ denotes the space of vector-valued $H^1(\Omega)$ functions with zero trace at the boundary. For ease of notation, we define the spaces for vector functions of dimension $d$ as $\vecSpL := \left(L^2(\Omega)\right)^d$ and $\vecSpHTr := \left(H_0^1(\Omega)\right)^d$, and additionally the space of tensor-valued $L^2(\Omega)$ functions $\tensSpL := \left(L^2(\Omega)\right)^{d \times d}$. The spaces are equipped with norms $\| \cdot \|_X$, and inner products $( \cdot,\cdot )_X$. If the subscripts are omitted we refer to the respective $L^2(\Omega)$-versions. Moreover, we use the theory of Bochner spaces (e.g.~\cite{Boyer2013}, §5; \cite{TRoubi2013}, §1.5, §7.1). For a given Banach space $X$, we define
\begin{align*}
 L^2([0,T];X) := \left\{f\!:[0,T] \rightarrow X \Big| \ f \textrm{ Bochner measurable, } \int_0^T \|f(t)\|_X^2\; dt < \infty\right\}
\end{align*}
and
\begin{align*}
 H^1([0,T];X) := \left\{f \in L^2([0,T];X) \Big| \ \partial_t f \in L^2([0,T];X)\right\} \textrm{ (cf.~\cite{Evans2010}, §5.9.2).}
\end{align*}
In addition, we will frequently make use of classical inequalities such as the Cauchy-Schwarz and Young's inequalities (see e.g.~\cite{Hardy1988}). For the Poincar\'e-Wirtinger inequality we refer to Proposition III.2.39 of \cite{Boyer2013}.

\subsection{Assumptions on model parameters}
Throughout this work we require the following assumptions to hold:

\begin{itemize}
\item[(A1)]
The double-well potential $\Psi(\pf)$ is non-negative, i.e., $\Psi(\pf) \geq 0$ for all $\pf \in \mathbb{R}$.
Moreover, we assume that the double-well potential allows for a convex-concave split in the form 
\begin{align*}
 \Psi(\pf) = \Psi_c(\pf) - \Psi_e(\pf)
\end{align*}
with convex functions $\Psi_c(\pf)$ and $\Psi_e(\pf)$. $\Psi_c(\pf)$ and $\Psi_e(\pf)$ are differentiable, and $\Psi_c'(\pf)$ is Lipschitz continuous with Lipschitz constant $L_{\Psi_c'}$.

\item[(A2)]
Mobility $m(\pf) = m$ and permeability $\kappa(\pf) = \kappa$ are state-independent and given by positive constants.
The Biot modulus $M$ (with integrated fluid compressibility) and Biot-Willis coupling coefficient $\alpha$ as functions of $\pf$ are Lipschitz continuous, differentiable, and uniformly bounded such that there exist suitable constants $c_M,\ C_M,\ c_\alpha,\ C_\alpha$ satisfying for all $\pf \in \mathbb{R}$
\begin{alignat*}{8}
 0 &<& \hspace*{0.1cm} c_M &\leq& \hspace*{0.1cm} M(\pf) &\leq& \hspace*{0.1cm} C_M &<& \hspace*{0.1cm} \infty, \\ 
 0 &<& c_\alpha &\leq& \alpha(\pf) &\leq& C_\alpha &\leq& 1.
\end{alignat*}

\item[(A3)]
The elastic stiffness tensor $\mathbb{C}(\pf)$ is Lipschitz continuous and differentiable. We further assume that there exist positive constants $c_{\mathbb{C}}$ and $C_{\mathbb{C}}$ such that
\begin{align*}
 c_{\mathbb{C}}|\bm \varepsilon|^2 \leq \bm \varepsilon\!:\!\mathbb{C}(\pf) \bm \varepsilon \leq C_{\mathbb{C}}|\bm \varepsilon|^2
\end{align*}
holds for all $\pf \in \mathbb{R}$ and $\bm \varepsilon \in \mathbb{R}^{d\times d}_\mathrm{sym}$ and that the symmetry conditions on $\mathbb{C}$ known from linear elasticity, cf., e.g.,~\cite{Coussy2004}, are fulfilled.

\item[(A4)]
The eigenstrain $\mathcal{T}(\pf)$ is symmetric, Lipschitz continuous, differentiable, and bounded by the $L^2(\Omega)$ norm of $\pf$ such that for all $\pf \in L^2(\Omega)$
\begin{align*}
 c_{\mathcal{T}} \|\pf\|_{L^2(\Omega)} \leq \|\mathcal{T}(\pf)\|_{\tensSpL} \leq C_{\mathcal{T}} \|\pf\|_{L^2(\Omega)}
\end{align*}
for positive constants $c_\mathcal{T}$ and $C_\mathcal{T}$.

\item[(A5)]
The source terms are (for simplicity) assumed to be autonomous and satisfy $R, S_f \in L^2(\Omega)$ and $\bm f \in \vecSpL$ with uniform bounds.
\end{itemize}

\begin{remark}[Assumption (A1)] \label{rem:double-well}
For the classical choice of the double-well potential
\begin{align*}
 \tilde\Psi(\pf) = \left(1 - \pf^2\right)^2
\end{align*}
with the convex-concave split
\begin{align*}
 \tilde\Psi(\pf) = \left[\pf^4 + 1\right] - \left[2\pf^2\right] = \tilde\Psi_c(\pf) - \tilde\Psi_e(\pf)
\end{align*}
assumption (A1) is not fulfilled as $\tilde\Psi_c'(\pf)$ is not Lipschitz continuous. In order to do so, we introduce a modified double-well potential that is piecewisely defined. For constant $\beta > 1$, we have
\begin{align*}
 \Psi(\pf) =
 \begin{cases}
  \tilde\Psi(\pf) &\text{ if } \pf \in \left(-\beta, \beta\right) \\
  2\left(\beta^2 - 1\right) \pf^2 - \left(\beta^4 - 1\right) &\text{ else}
 \end{cases}
\end{align*}
and introduce the split $\Psi(\pf) = \Psi_c(\pf) - \Psi_e(\pf)$ with
\begin{align*}
 \Psi_c(\pf) =
 \begin{cases}
  \tilde\Psi_c(\pf) &\text{ if } \pf \in \left(-\beta, \beta\right) \\
  2 \beta^2 \pf^2 - \left(\beta^4 - 1\right) &\text{ else}
 \end{cases}
\end{align*}
and 
\begin{align*}
\Psi_e(\pf) = \tilde\Psi_e(\pf) = 2\pf^2.
\end{align*}
Then, $\Psi_c'(\pf)$ is Lipschitz continuous with Lipschitz constant $L_{\Psi_c'} = 4 \beta^2$. Note that (for a suitable choice of $\beta$) this modification does not alter the solution as $\pf$ rarely takes values outside of $[-1,1]$.
\end{remark}

\begin{remark}[Assumption (A4)]
Note that assumption (A4) is satisfied for the linear ansatz of Vegard's law \cite{Garcke2021}, that is, $\mathcal{T}(\pf) = \bm{\hat \varepsilon} \pf$ with constant symmetric tensor $\bm{\hat \varepsilon}$, and thus also for the choice $\mathcal{T}(\pf) = \xi \pf \bm I$ \cite{Areias2016} that accounts for swelling effects with (positive) constant $\xi$.
\end{remark}

\section{Numerical solution strategy} \label{sec:solution strategy}

This section is devoted to deriving an efficient and robust numerical solution strategy for the Cahn-Hilliard-Biot model based on a tailor-made semi-implicit time discretization. The following considerations are split into the case of state-independent material parameters in Section~\ref{sec:state-independent} and the state-dependent one in Section~\ref{sec:state-dependent}.

\subsection{Variational form}

The continuous variational formulation of system \eqref{eq:model} considered here reads: \\
Find $\pf \in H^1\big([0,T];H^1(\Omega)\big)$, $\mu \in L^2\big([0,T];H^1(\Omega)\big)$, $\bm u \in L^2\big([0,T];\vecSpHTr\big)$, $\theta \in H^1\big([0,T];L^2(\Omega)\big)$ and $p \in L^2\big([0,T];H^1(\Omega)\big)$ such that
\begin{subequations}
\label{eq:model_var}
\begin{align}
\left(\partial_t \pf,\eta^\pf\right) + \left(m \nabla \mu, \nabla \eta^\pf\right) - \left(R,\eta^\pf\right) &= 0, \label{eq:ch1_var}\\
\left(\mu,\eta^\mu\right) - \gamma\left(\ell\left(\nabla\pf,\nabla\eta^\mu\right)+\frac{1}{\ell}\left(\Psi'(\pf),\eta^\mu\right)\right) - \left(\delta_\pf\mathcal{E}_\mathrm{e}\!\left(\pf, \strain\right),\eta^\mu\right) - \left(\delta_\pf \mathcal{E}_\mathrm{f}\!\left(\pf, \strain, \theta\right),\eta^\mu\right) &= 0, \label{eq:ch2_var}\\
\left(\mathbb{C}(\pf)\left(\bm\varepsilon\!\left(\bm u\right)-\mathcal{T}(\pf)\right),\bm\varepsilon\!\left(\bm \eta^{\bm u}\right)\right) - \left(\alpha(\pf) M(\pf)\left(\theta - \alpha(\pf) \Div \bm u\right), \Div \bm \eta^{\bm u}\right)-\left({\bm f},\bm \eta^{\bm u}\right) &= 0, \label{eq:elasticity_var}\\
\left(\partial_t \theta, \eta^\theta\right) + \left(\kappa \nabla p, \nabla \eta^\theta\right) - \left(S_\mathrm{f}, \eta^\theta\right) &= 0, \label{eq:flow_var}\\
\left(p, \eta^p\right) - \left(M(\pf)\left(\theta - \alpha(\pf) \Div \bm u\right), \eta^p\right) &= 0 \label{eq:pressure_var}
\end{align}
\end{subequations}
for all $\eta^\pf$, $\eta^\mu \in H^1(\Omega)$, $\bm \eta^{\bm u} \in \vecSpHTr$, $\eta^\theta \in H^1(\Omega)$ and $\eta^p \in L^2(\Omega)$, and a.e.~$t \in [0,T]$.

\subsection{Discrete function spaces and norms}

For the spatial discretization (indicated by subscript $h$) with mesh diameter $h$, let $\mathcal{V}_h^{\mathrm{ch}} \subseteq H^1(\Omega)$, $\mathcal{V}_h^{\bm u} \subseteq \vecSpHTr$ and $\mathcal{V}_h^{\mathrm{f}} \subseteq H^1(\Omega)$ be conforming finite element spaces, serving as the solution spaces for the Cahn-Hilliard, elasticity and flow subproblems respectively.

The subsequent analysis necessitates further specification of the spaces for the Cahn-Hilliard and the flow subproblem. Due to their structural similarity, we introduce the auxiliary variables $y \in \{m, \kappa\}$, $\mathrm{z} \in \{\mathrm{ch}, \mathrm{f}\}$ with feasible pairs $(y,\mathrm{z}) \in \left\{(m,\mathrm{ch}), (\kappa,\mathrm{f})\right\}$.

We then define $\mathcal{V}_{h,0}^{\mathrm{z}} = \left\{\phi_h \in \mathcal{V}_h^{\mathrm{z}} \big| \int_\Omega \phi_h \;dx = 0\right\}$ along with the parameter-dependent norm $\lVert \phi_h \rVert_{h,y} = \lVert y^{\frac{1}{2}} \nabla \phi_h \rVert_{\vecSpL}$. Moreover, let $\mathcal{V}_{h,y}^{\mathrm{z},'}$ denote the dual space of $\left(\mathcal{V}_{h,0}^{\mathrm{z}}, \lVert \cdot \rVert_{h,y}\right)$ with dual norm $\lVert \cdot \rVert_{\mathcal{V}_{h,y}^{\mathrm{z},'}}$.
For any $s_h \in \mathcal{V}_{h,y}^{\mathrm{z},'}$ we consider the unique solution $-\Delta_{h,y}^{-1} s_h := r_h \in \mathcal{V}_{h,0}^{\mathrm{z}}$ of the auxiliary problem
\begin{align*}
 \left(y \nabla r_h, \nabla l_h\right) = \left\langle s_h, l_h \right\rangle
\end{align*}
for all $l_h \in \mathcal{V}_{h,0}^{\mathrm{z}}$. Hence, we obtain
\begin{align*}
 \lVert s_h \rVert_{\mathcal{V}_{h,y}^{\mathrm{z},'}} := \sup_{\substack{l_h \in \mathcal{V}_{h,0}^{\mathrm{z}} \\ \lVert l_h \rVert_{h,y} \neq 0}} \frac{\left\langle s_h, l_h \right\rangle}{\lVert l_h \rVert_{h,y}} = \sup_{\substack{l_h \in \mathcal{V}_{h,0}^{\mathrm{z}} \\ \lVert l_h \rVert_{h,y} \neq 0}} \frac{\left(y \nabla r_h, \nabla l_h\right)}{\lVert y^{\frac{1}{2}} \nabla l_h \rVert_{\vecSpL}} = \lVert y^{\frac{1}{2}} \nabla r_h \rVert_{\vecSpL}.
\end{align*}

Furthermore, for $t_h, q_h \in \mathcal{V}_{h,0}^{\mathrm{z}}$ the inner-product on $\mathcal{V}_{h,y}^{\mathrm{z},'}$ is given by
\begin{align}
 \left(t_h, q_h\right)_{\mathcal{V}_{h,y}^{\mathrm{z},'}} = \left(v_h, q_h\right),
\label{eq:inner_dual_ch_f}
\end{align}
where $v_h \in \mathcal{V}_h^{\mathrm{z}}$ solves
\begin{align}
 \left(y \nabla v_h, \nabla l_h\right) = \left(t_h, l_h\right)
 \label{eq:aux_inner_dual_ch_f}
\end{align}
for all $l_h \in \mathcal{V}_h^{\mathrm{z}}$. Consequently, we have that
\begin{align*}
 \left(q_h, q_h\right)_{\mathcal{V}_{h,y}^{\mathrm{z},'}} = \left(v_h, q_h\right) = \left(y \nabla v_h, \nabla v_h\right) = \lVert y^{\frac{1}{2}} \nabla v_h \rVert_{\vecSpL}^2 = \lVert q_h \rVert_{\mathcal{V}_{h,y}^{\mathrm{z},'}}^2.
\end{align*}

\begin{remark} \label{rem:inner_dual_ch_f}
 Note that \eqref{eq:aux_inner_dual_ch_f} holds true for all $l_h \in \mathcal{V}_h^{\mathrm{z}}$ as $t_h \in \mathcal{V}_{h,0}^{\mathrm{z}}$. Moreover, uniqueness of $v_h$ solving \eqref{eq:aux_inner_dual_ch_f} can be imposed by prescribing its mean. Further note that the latter is still in consistency with \eqref{eq:inner_dual_ch_f} as the value of the inner-product $\left(v_h, q_h\right)$ does not change subject to altering the mean of $v_h$ as $q_h \in \mathcal{V}_{h,0}^{\mathrm{z}}$.
\end{remark}

\begin{lemma}
 For all $q_h \in \mathcal{V}_{h,0}^{\mathrm{z}}$ (with $\|q_h\|_{h,y} \neq 0$) there exists a positive constant $C_\mathrm{inv}$ such that
 \begin{align}
  \|q_h\|_{L^2(\Omega)} \leq \frac{C_\mathrm{inv}}{h} \lVert q_h \rVert_{\mathcal{V}_{h,y}^{\mathrm{z},'}}.
\label{eq:inv_eq_dual}
 \end{align}
\end{lemma}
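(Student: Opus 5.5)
We prove \eqref{eq:inv_eq_dual} by testing the auxiliary problem \eqref{eq:aux_inner_dual_ch_f} with $q_h$ itself and then applying a standard inverse inequality to $\nabla q_h$. Fix $q_h \in \mathcal{V}_{h,0}^{\mathrm{z}}$ and let $v_h \in \mathcal{V}_h^{\mathrm{z}}$ solve \eqref{eq:aux_inner_dual_ch_f} with $t_h = q_h$. By the computation following \eqref{eq:inner_dual_ch_f}, this gives $\|q_h\|_{\mathcal{V}_{h,y}^{\mathrm{z},'}} = \|y^{\frac12}\nabla v_h\|_{\vecSpL}$. Since $q_h \in \mathcal{V}_h^{\mathrm{z}}$ is an admissible test function (cf. Remark~\ref{rem:inner_dual_ch_f}), choosing $l_h = q_h$ gives
\begin{align*}
 \|q_h\|_{L^2(\Omega)}^2 = \left(y\nabla v_h, \nabla q_h\right) \leq \|y^{\frac12}\nabla v_h\|_{\vecSpL}\, \|y^{\frac12}\nabla q_h\|_{\vecSpL} = \|q_h\|_{\mathcal{V}_{h,y}^{\mathrm{z},'}}\, \|q_h\|_{h,y},
\end{align*}
where the inequality is Cauchy--Schwarz.

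It remains to bound $\|q_h\|_{h,y}$ by $h^{-1}\|q_h\|_{L^2(\Omega)}$. This is the only non-elementary ingredient, and it is precisely the inverse inequality the introduction says we must assume. Concretely, we take the spatial discretization to be built on a shape-regular, quasi-uniform family of meshes. The standard finite element inverse estimate then provides $C>0$, independent of $h$, with $\|\nabla \phi_h\|_{\vecSpL} \leq C h^{-1} \|\phi_h\|_{L^2(\Omega)}$ for all $\phi_h \in \mathcal{V}_h^{\mathrm{z}}$. Because $y \in \{m,\kappa\}$ is a positive constant by (A2), we get $\|q_h\|_{h,y} = y^{\frac12}\|\nabla q_h\|_{\vecSpL} \leq y^{\frac12} C h^{-1}\|q_h\|_{L^2(\Omega)}$.

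Inserting this bound into the estimate above yields $\|q_h\|_{L^2(\Omega)}^2 \leq y^{\frac12}C h^{-1}\|q_h\|_{L^2(\Omega)}\|q_h\|_{\mathcal{V}_{h,y}^{\mathrm{z},'}}$. The assumption $\|q_h\|_{h,y}\neq 0$ forces $q_h \neq 0$, so we may divide by $\|q_h\|_{L^2(\Omega)}>0$. This gives \eqref{eq:inv_eq_dual} with $C_\mathrm{inv} = y^{\frac12}C$.

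The main obstacle is not the algebra but justifying the inverse inequality: it fails on general (non-quasi-uniform) meshes, so the hypothesis on the mesh family has to be made explicit. If one only had local mesh sizes, the same argument would give the estimate with $h$ replaced by the minimal mesh size $h_{\min}$.
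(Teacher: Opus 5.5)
Your proof is correct and follows the paper's argument. You get the key inequality $\|q_h\|_{L^2(\Omega)}^2 \le \|q_h\|_{\mathcal{V}_{h,y}^{\mathrm{z},'}}\,\|q_h\|_{h,y}$ from the representer $v_h$ and Cauchy--Schwarz, whereas the paper inserts $l_h = q_h$ into the supremum defining the dual norm; both then close with $\|\nabla q_h\|_{\vecSpL} \le C h^{-1}\|q_h\|_{L^2(\Omega)}$ to obtain $C_\mathrm{inv} = C y^{1/2}$, and your explicit quasi-uniformity hypothesis only states what the paper leaves implicit in citing the standard inverse estimate.
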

\begin{proof}
The inverse inequality
\begin{align*}
 \|q_h\|_{H^1(\Omega)} \leq C h^{-1} \|q_h\|_{L^2(\Omega)}
\end{align*}
for some constant $C > 0$ (see e.g.~\cite{Ern2004}, Corollary 1.141; \cite{Brenner2008}, Theorem 4.5.11) implies
\begin{align}
 \|\nabla q_h\|_{\vecSpL} \leq C h^{-1} \|q_h\|_{L^2(\Omega)}.
\label{eq:inv_eq}
\end{align}
For $q_h \in \mathcal{V}_{h,0}^{\mathrm{z}}$ with $\|q_h\|_{h,y} \neq 0$ by the definition of the dual norm we have
\begin{align*}
 \lVert q_h \rVert_{\mathcal{V}_{h,y}^{\mathrm{z},'}} = \sup_{\substack{l_h \in \mathcal{V}_{h,0}^{\mathrm{z}} \\ \lVert l_h \rVert_{h,y} \neq 0}} \frac{\left\langle q_h, l_h \right\rangle}{\lVert l_h \rVert_{h,y}} \geq \frac{\left\langle q_h, q_h \right\rangle}{\lVert q_h \rVert_{h,y}} = \frac{\|q_h\|_{L^2(\Omega)}^2}{\lVert y^{\frac{1}{2}} \nabla q_h \rVert_{\vecSpL}},
\end{align*}
which is equivalent to
\begin{align*}
 y^{\frac{1}{2}} \|\nabla q_h\|_{\vecSpL} \lVert q_h \rVert_{\mathcal{V}_{h,y}^{\mathrm{z},'}} \geq \|q_h\|_{L^2(\Omega)}^2
\end{align*}
and by \eqref{eq:inv_eq} together with choosing $C_\mathrm{inv} = C y^{\frac{1}{2}}$ we obtain \eqref{eq:inv_eq_dual}.
\end{proof}

\subsection{State-independent material parameters} \label{sec:state-independent}

In this section, we limit the scope to the case where all of the material parameters are constant for a cleaner analysis of the problem. In particular, the fluid compressibility coefficient $M(\pf) = M$ and Biot-Willis coupling coefficient $\alpha(\pf) = \alpha$ are positive constants. We further consider the case of homogeneous elasticity, i.e., the elastic stiffness tensor $\mathbb{C}(\pf) = \mathbb{C}$ is constant, symmetric and positive definite. Hence, the variational derivatives (with respect to $\pf$) reduce to
\begin{align*}
 \delta_\pf\mathcal{E}_\mathrm{e}(\pf, \strain) = - \mathcal{T}'\!(\pf)\!:\!\mathbb{C}\big(\bm\varepsilon\!\left(\bm u\right)-\mathcal{T}(\pf)\big) 
 \quad \text{and} \quad
 \delta_\pf\mathcal{E}_{\mathrm{f}}(\pf, \strain, \theta) = 0.
\end{align*}

\subsubsection{Discretization}

Let $\tau$ be the uniform time-step size, and $n$ the time-step index.
The fully discrete problem corresponding to the variational formulation~\eqref{eq:model_var} then reads: \\
Given $\pf_h^{n-1} \in \mathcal{V}_h^{\mathrm{ch}}$ and $\theta_h^{n-1} \in \mathcal{V}_h^{\mathrm{f}}$, find $\pf_h^{n}, \mu_h^{n} \in \mathcal{V}_h^{\mathrm{ch}}$, $\bm u_h^{n} \in \mathcal{V}_h^{\bm u}$, and $\theta_h^{n}, p_h^n \in \mathcal{V}_h^{\mathrm{f}}$ such that
\begin{subequations}
\label{eq:model_disc}
\begin{align}
\left(\pf_h^{n} - \pf_h^{n-1},\eta_h^\pf\right) + \tau \left(m \nabla \mu_h^{n}, \nabla \eta_h^\pf\right) - \tau \left(R,\eta_h^\pf\right) &= 0, \label{eq:ch1split}\\
\left(\mu_h^{n},\eta_h^\mu\right) - \gamma\left(\ell\left(\nabla\pf_h^{n},\nabla\eta^\mu_h\right)+\frac{1}{\ell}\left(\Psi_c'\!\left(\pf_h^{n}\right)-\Psi_e'\!\left(\pf_h^{n-1}\right),\eta_h^\mu\right)\right)\nonumber\\ + \left(\mathcal{T}'\!(\pf_h^{n})\!:\!\mathbb{C}\big(\bm\varepsilon\!\left(\bm u_h^{n}\right)-\mathcal{T}(\pf_h^{n})\big),\eta_h^\mu\right) &= 0, \label{eq:ch2split}\\
\left(\mathbb{C}\!\left(\bm\varepsilon\!\left(\bm u_h^{n}\right)-\mathcal{T}\!\left(\pf_h^{n}\right)\right),\bm\varepsilon\!\left(\bm \eta_h^{\bm u}\right)\right) - \left(\alpha M \left(\theta_h^n - \alpha \Div \bm u_h^n\right), \Div \bm \eta_h^{\bm u}\right)-\left({\bm f},\bm \eta_h^{\bm u}\right) &= 0, \label{eq:elasticitysplit}\\
\left(\theta_h^{n} - \theta_h^{n-1}, \eta_h^\theta\right) + \tau \left(\kappa \nabla p_h^n, \nabla \eta_h^\theta\right) - \tau \left(S_\mathrm{f}, \eta_h^\theta\right) &= 0, \label{eq:flowsplit}\\
\left(p_h^n, \eta_h^p\right) - \left(M \left(\theta_h^n - \alpha \Div \bm u_h^n\right), \eta_h^p\right) &= 0 \label{eq:pressuresplit}
\end{align}
\end{subequations}
for all $\eta_h^\pf, \eta_h^\mu \in \mathcal{V}_h^\mathrm{ch},$ $\bm \eta_h^{\bm u} \in \mathcal{V}_h^{\bm u}$, and $\eta_h^\theta, \eta_h^{p} \in \mathcal{V}_h^\mathrm{f}$.

\begin{lemma}
The solution to the discrete problem~\eqref{eq:model_disc} is equivalent to the solution of the following minimization problem: \\
Given $\pf_h^{n-1} \in \mathcal{V}_h^\mathrm{ch}$, $\bm u_h^{n-1} \in \mathcal{V}_h^{\bm u}$ and $\theta_h^{n-1} \in \mathcal{V}_h^{\mathrm{f}}$, solve
\begin{align}
 \left(\pf_h^n, \bm u_h^n, \theta_h^n\right) = \argmin_{\pf_h \in \mathcal{\bar V}_h^{\mathrm{ch},n}, \bm u_h \in \mathcal{V}_h^{\bm u}, \theta_h \in \mathcal{\bar V}_h^{\mathrm{f},n}} \calH_\tau^n\!\left(\pf_h, \bm u_h, \theta_h\right)
\label{eq:minim}
\end{align}
with spaces
\begin{align*}
 \mathcal{\bar V}_h^{\mathrm{ch},n} := \left\{\pf_h \in \mathcal{V}_h^\mathrm{ch} \bigg| \int_\Omega \frac{\pf_h - \pf_h^{n-1}}{\tau} \;dx = \int_\Omega R \;dx\right\}
\quad \text{and} \quad 
 \mathcal{\bar V}_h^{\mathrm{f},n} := \left\{\theta_h \in \mathcal{V}_h^{\theta} \bigg| \int_\Omega \frac{\theta_h - \theta_h^{n-1}}{\tau} \;dx = \int_\Omega S_\mathrm{f} \;dx\right\}
\end{align*}
and the potential
\begin{align*}
 \calH_\tau^n\!\left(\pf_h, \bm u_h, \theta_h\right) := &\frac{\lVert \pf_h - \pf_h^{n-1} - \tau R \rVert_{\mathcal{V}_{h,m}^{\mathrm{ch},'}}^2}{2\tau} + \energy_\mathrm{c}(\pf_h, \strain[u_h], \theta_h) + \frac{\lVert \theta_h - \theta_h^{n-1} - \tau S_\mathrm{f} \rVert_{\mathcal{V}_{h,\kappa}^{\mathrm{f},'}}^2}{2\tau} \\
 &- \frac{\gamma}{\ell} \left(\Psi_e'\!\left(\pf_h^{n-1}\right), \pf_h\right) - \left({\bm f}, \bm u_h\right)
\end{align*}
with convex energy
\begin{align*}
 \energy_\mathrm{c}(\pf_h, \strain[u_h], \theta_h) := \energy_\mathrm{quad}(\pf_h, \strain[u_h], \theta_h) + \frac{\gamma}{\ell} \int_\Omega \Psi_c(\pf_h) \;dx,
\end{align*}
where
\begin{align*}
 \energy_\mathrm{quad}(\pf_h, \strain[u_h], \theta_h) := \int_\Omega \frac{\gamma \ell}{2} |\nabla \pf_h|^2 + \frac{1}{2} \big(\bm\varepsilon(\bm u_h) - \mathcal{T}(\pf_h)\big) \!:\!\mathbb{C} \big(\bm\varepsilon(\bm u_h) - \mathcal{T}(\pf_h)\big) + \frac{M}{2} (\theta_h - \alpha \Div \bm u_h)^2 \;dx.
\end{align*}
\label{lem:equivalence}
\end{lemma}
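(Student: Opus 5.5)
The plan is to show that the first-order optimality conditions of the minimization problem \eqref{eq:minim}, posed on the affine spaces $\mathcal{\bar V}_h^{\mathrm{ch},n}$ and $\mathcal{\bar V}_h^{\mathrm{f},n}$, coincide with \eqref{eq:model_disc} once the auxiliary variables $\mu_h^n$ and $p_h^n$ are identified. The argument then rests on two further facts: $\calH_\tau^n$ is convex on these affine sets, and the conservation constraints are exactly the $\eta_h^\pf\equiv 1$ and $\eta_h^\theta\equiv 1$ instances of \eqref{eq:ch1split} and \eqref{eq:flowsplit}.

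\textbf{Step 1: constraints.} Testing \eqref{eq:ch1split} with $\eta_h^\pf = 1$ gives $\int_\Omega (\pf_h^n-\pf_h^{n-1})\,dx = \tau\int_\Omega R\,dx$, so $\pf_h^n\in\mathcal{\bar V}_h^{\mathrm{ch},n}$. In the same way, $\theta_h^n\in\mathcal{\bar V}_h^{\mathrm{f},n}$. Conversely, every admissible $\pf_h$ satisfies $\pf_h-\pf_h^{n-1}-\tau R\in\mathcal{V}_{h,0}^{\mathrm{ch}}$ after $L^2$-projecting $R$ onto $\mathcal{V}_h^{\mathrm{ch}}$; the same holds for $S_\mathrm{f}$. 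This projection is the implicit convention needed for the dual norms to make sense, and I will use it throughout. The admissible variations are then $\eta\in\mathcal{V}_{h,0}^{\mathrm{ch}}$, $\bm\eta\in\mathcal{V}_h^{\bm u}$ and $\zeta\in\mathcal{V}_{h,0}^{\mathrm{f}}$.

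\textbf{Step 2: Gâteaux derivatives.} I compute each piece of the derivative of $\calH_\tau^n$.
\begin{itemize}
\item Dual-norm term. Set $t_h=\pf_h-\pf_h^{n-1}-\tau R$. By \eqref{eq:inner_dual_ch_f}, the derivative of $\frac{1}{2\tau}\|t_h\|^2_{\mathcal{V}_{h,m}^{\mathrm{ch},'}}$ in direction $\eta$ is $\frac1\tau(v_h,\eta)$, where $v_h$ solves $(m\nabla v_h,\nabla l_h)=(t_h,l_h)$ for all $l_h\in\mathcal{V}_h^{\mathrm{ch}}$ (Remark~\ref{rem:inner_dual_ch_f}).
\item Energy terms. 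Differentiating $\energy_\mathrm{c}$ and the linear terms gives $\gamma\ell(\nabla\pf_h,\nabla\eta)+\frac\gamma\ell(\Psi_c'(\pf_h)-\Psi_e'(\pf_h^{n-1}),\eta)-(\mathcal{T}'(\pf_h):\mathbb{C}(\strain[u_h]-\mathcal{T}(\pf_h)),\eta)$.
\item Displacement. Differentiating in $\bm u_h$ yields exactly the left-hand side of \eqref{eq:elasticitysplit}.
\item Fluid content. Differentiating in $\theta_h$ yields $\frac1\tau(w_h,\zeta)+(M(\theta_h-\alpha\Div\bm u_h),\zeta)$, with $w_h$ defined analogously through $\kappa$.
\end{itemize}

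\textbf{Step 3: identifying the multipliers.} I define $\mu_h^n:=-\frac1\tau v_h+c$, where the constant $c$ plays the role of a Lagrange multiplier for the mass constraint. It is chosen so that $\mu_h^n$ satisfies \eqref{eq:ch2split} also for $\eta_h^\mu=1$; the vanishing of the derivative on $\mathcal{V}_{h,0}^{\mathrm{ch}}$ then extends to all of $\mathcal{V}_h^{\mathrm{ch}}$.

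With this choice, the definition of $v_h$ is exactly \eqref{eq:ch1split}: $(\pf_h^n-\pf_h^{n-1},\eta)+\tau(m\nabla\mu_h^n,\nabla\eta)-\tau(R,\eta)=0$, where the constant $c$ drops out because only its gradient enters. Analogously, I set $p_h^n:=-\frac1\tau w_h+c'$. Then \eqref{eq:flowsplit} follows from the definition of $w_h$, and choosing $c'$ so that \eqref{eq:pressuresplit} holds for $\eta_h^p=1$ makes the stationarity condition in $\theta$ equivalent to \eqref{eq:pressuresplit}.

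For the converse direction, I start from a solution of \eqref{eq:model_disc}. Then $-\tau\mu_h^n$ differs from $v_h$ only by a constant, since both solve the same Neumann problem. Reading Step 2 backwards shows that the derivative of $\calH_\tau^n$ vanishes in all admissible directions.

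\textbf{Step 4: convexity.} To conclude that stationary points are exactly minimizers, I need convexity of $\calH_\tau^n$ on the affine set.
\begin{itemize}
\item The dual-norm terms are squared norms of affine maps, and $\Psi_c$ is convex by (A1).
\item The $-\Psi_e'(\pf_h^{n-1})$ and $\bm f$ terms are linear.
\item The remaining term, $\frac12(\strain[u_h]-\mathcal{T}(\pf_h)):\mathbb{C}(\cdots)$, is the main obstacle: it is convex jointly in $(\pf_h,\bm u_h)$ only if $\mathcal{T}$ is affine. This holds for Vegard's law, $\mathcal{T}'$ constant, and I will restrict to that case (or assume it implicitly, consistent with the remark after (A4)).
\end{itemize}
If $\mathcal{T}$ is not affine, the equivalence can only be stated between \eqref{eq:model_disc} and the critical points of $\calH_\tau^n$. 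Granting convexity, the first-order conditions are both necessary and sufficient, which completes the equivalence. Uniqueness is not part of the claim.
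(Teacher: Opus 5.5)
Your proposal is correct and follows essentially the same route as the paper: derive the first-order conditions of $\calH_\tau^n$ on the affine spaces (variations in $\mathcal{V}_{h,0}^{\mathrm{ch}}$, $\mathcal{V}_h^{\bm u}$, $\mathcal{V}_{h,0}^{\mathrm{f}}$), then identify $\mu_h^n$ and $p_h^n$ through the discrete dual inner product \eqref{eq:inner_dual_ch_f}, fixing their means so the equations extend to the full spaces. The paper never argues separately that stationary points are minimizers (it only calls $\energy_\mathrm{c}$ convex), so your explicit convexity step is a genuine addition, and your caveat that joint convexity of the elastic term requires $\mathcal{T}$ affine is justified---the paper tacitly relies on the same linearity later, when it uses $\mathcal{T}(0)=\bm 0$ and writes differences of $\energy_\mathrm{quad}$ as $2\energy_\mathrm{quad}$ of the differences.
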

\begin{proof}
Firstly, we calculate the variational derivatives of the potential $\calH_\tau^n$ with respect to its arguments, yielding
\begin{align*}
 &\delta_\pf \calH_\tau^n\!\left(\pf_h^n, \bm u_h^n, \theta_h^n\right) = \frac{\pf_h^n - \pf_h^{n-1} - \tau R}{\tau} + \delta_\pf \energy_\mathrm{c}\!\left(\pf_h^n, \strain[u_h^n], \theta_h^n\right) - \frac{\gamma}{\ell} \Psi_e'\!\left(\pf_h^{n-1}\right), \\
 &\delta_{\bm u} \calH_\tau^n\!\left(\pf_h^n, \bm u_h^n, \theta_h^n\right) = \delta_{\strain} \energy_\mathrm{c}\!\left(\pf_h^n, \strain[u_h^n], \theta_h^n\right) - \bm f, \\
 &\delta_\theta \calH_\tau^n\!\left(\pf_h^n, \bm u_h^n, \theta_h^n\right) = \frac{\theta_h^n - \theta_h^{n-1} - \tau S_\mathrm{f}}{\tau} + \delta_\theta \energy_\mathrm{c}\!\left(\pf_h^n, \strain[u_h^n], \theta_h^n\right),
\end{align*}
where the respective variational derivatives of the convex energy $\energy_\mathrm{c}$ are given by
\begin{align*}
 \delta_\pf \energy_\mathrm{c}\!\left(\pf_h^n, \strain[u_h^n], \theta_h^n\right) &= \frac{\gamma}{\ell} \Psi_c'(\pf_h^n) - \gamma \ell \Delta \pf_h^n - \mathcal{T}'\!(\pf_h^n)\!:\!\mathbb{C}\big(\bm\varepsilon\!\left(\bm u_h^n\right)-\mathcal{T}(\pf_h^n)\big), \\
 \delta_{\strain} \energy_\mathrm{c}\!\left(\pf_h^n, \strain[u_h^n], \theta_h^n\right) &= \mathbb{C}\big(\bm\varepsilon\!\left(\bm u_h^n\right)-\mathcal{T}(\pf_h^n)\big) - \alpha M \left(\theta_h^n - \alpha \Div \bm u_h^n\right) \bm{I}, \\
 \delta_\theta \energy_\mathrm{c}\!\left(\pf_h^n, \strain[u_h^n], \theta_h^n\right) &= M \left(\theta_h^n - \alpha \Div \bm u_h^n\right).
\end{align*}
The optimality conditions to the minimization problem~\eqref{eq:minim} then read: \\
Find
$\pf_h^n \in \mathcal{\bar V}_h^{\mathrm{ch},n}$, $\bm u_h^n \in \mathcal{V}_h^{\bm u}$, $\theta_h^n \in \mathcal{\bar V}_h^{\mathrm{f},n}$
such that
\begin{align}
 &0 = \left\langle \delta_\pf \calH_\tau^n\!\left(\pf_h^n, \bm u_h^n, \theta_h^n\right), \pf_h\right\rangle = \left(\frac{\pf_h^n - \pf_h^{n-1}}{\tau} - R, \pf_h\right)_{\mathcal{V}_{h,m}^{\mathrm{ch},'}} + \left(\delta_\pf \energy_\mathrm{c}\!\left(\pf_h^n, \strain[u_h^n], \theta_h^n\right) - \frac{\gamma}{\ell} \Psi_e'\!\left(\pf_h^{n-1}\right), \pf_h\right), \label{eq:optim_ch}\\
 &0 = \left\langle \delta_{\bm u} \calH_\tau^n\!\left(\pf_h^n, \bm u_h^n, \theta_h^n\right), \bm u_h\right\rangle = \left(\delta_{\strain} \energy_\mathrm{c}\!\left(\pf_h^n, \strain[u_h^n], \theta_h^n\right), \bm\varepsilon\!\left(\bm u_h\right)\right) - \left(\bm f, \bm u_h\right), \label{eq:optim_el}\\
 &0 = \left\langle \delta_\theta \calH_\tau^n\!\left(\pf_h^n, \bm u_h^n, \theta_h^n\right), \theta_h\right\rangle = \left(\frac{\theta_h^n - \theta_h^{n-1}}{\tau} - S_\mathrm{f}, \theta_h\right)_{\mathcal{V}_{h,\kappa}^{\mathrm{f},'}} + \left(\delta_\theta \energy_\mathrm{c}\!\left(\pf_h^n, \strain[u_h^n], \theta_h^n\right), \theta_h\right) \label{eq:optim_f}
\end{align}
for all $\pf_h \in \mathcal{V}_{h,0}^{\mathrm{ch}}$, $\bm u_h \in \mathcal{V}_h^{\bm u}$, $\theta_h \in \mathcal{V}_{h,0}^{\mathrm{f}}$.

As $\pf_h^n \in \mathcal{\bar V}_h^{\mathrm{ch},n}$, we have $\frac{\pf_h^n - \pf_h^{n-1}}{\tau} - R \in \mathcal{V}_{h,0}^{\mathrm{ch}}$ and can thus employ the definition of $(\cdot,\cdot)_{\mathcal{V}_{h,m}^{\mathrm{ch},'}}$, cf.~\eqref{eq:inner_dual_ch_f}, yielding
\begin{align*}
 \left(\frac{\pf_h^n - \pf_h^{n-1}}{\tau} - R, \pf_h\right)_{\mathcal{V}_{h,m}^{\mathrm{ch},'}} = \left(- \mu_h^n, \pf_h\right),
\end{align*}
where $\mu_h^n \in \mathcal{V}_h^{\mathrm{ch}}$ solves
\begin{align}
 \left(m \nabla\left(- \mu_h^n\right), \nabla l_h\right) = \left(\frac{\pf_h^n - \pf_h^{n-1}}{\tau} - R, l_h\right)
 \label{eq:optim_ch1}
\end{align}
for all $l_h \in \mathcal{V}_h^{\mathrm{ch}}$ with mean fixed as
\begin{align}
 \frac{1}{|\Omega|} \int_\Omega \mu_h^n \;dx = \frac{1}{|\Omega|} \int_\Omega \delta_\pf \energy_\mathrm{c}\!\left(\pf_h^n, \strain[u_h^n], \theta_h^n\right) - \frac{\gamma}{\ell} \Psi_e'\!\left(\pf_h^{n-1}\right) \;dx
 \label{eq:optim_mean_mu}
\end{align}
in accordance with Remark~\ref{rem:inner_dual_ch_f}.
Hence, \eqref{eq:optim_ch} is equivalent to
\begin{align}
 0 = - \left(\mu_h^n, \pf_h\right) + \left(\delta_\pf \energy_\mathrm{c}\!\left(\pf_h^n, \strain[u_h^n], \theta_h^n\right) - \frac{\gamma}{\ell} \Psi_e'\!\left(\pf_h^{n-1}\right), \pf_h\right)
 \label{eq:optim_ch2}
\end{align}
for all $\pf_h \in \mathcal{V}_{h,0}^{\mathrm{ch}}$. Note that \eqref{eq:optim_ch2} is satisfied for all $\pf_h \in \mathcal{V}_h^{\mathrm{ch}}$ due to \eqref{eq:optim_mean_mu}. 

Similarly, as $\theta_h^n \in \mathcal{\bar V}_h^{\mathrm{f},n}$, we have $\frac{\theta_h^n - \theta_h^{n-1}}{\tau} - S_\mathrm{f} \in \mathcal{V}_{h,0}^{\mathrm{f}}$ and thus obtain
\begin{align*}
 \left(\frac{\theta_h^n - \theta_h^{n-1}}{\tau} - S_\mathrm{f}, \theta_h\right)_{\mathcal{V}_{h,\kappa}^{\mathrm{f},'}} = (- p_h^n, \theta_h)    
\end{align*}
from \eqref{eq:inner_dual_ch_f}, where $p_h^n \in \mathcal{V}_h^\mathrm{f}$ solves
\begin{align}
 \left(\kappa \nabla\left(- p_h^n\right), \nabla l_h\right) = \left(\frac{\theta_h^n - \theta_h^{n-1}}{\tau} - S_\mathrm{f}, l_h\right)
 \label{eq:optim_flow}
\end{align}
for all $l_h \in \mathcal{V}_h^{\mathrm{f}}$ with mean fixed as
\begin{align}
 \frac{1}{|\Omega|} \int_\Omega p_h^n \;dx = \frac{1}{|\Omega|} \int_\Omega \delta_\theta \energy_\mathrm{c}\!\left(\pf_h^n, \strain[u_h^n], \theta_h^n\right) \;dx
 \label{eq:optim_mean_p}
\end{align}
in accordance with Remark~\ref{rem:inner_dual_ch_f}.
Hence, \eqref{eq:optim_f} is equivalent to
\begin{align}
 0 = - \left(p_h^n, \theta_h\right) + \left(\delta_\theta \energy_\mathrm{c}\!\left(\pf_h^n, \strain[u_h^n], \theta_h^n\right), \theta_h\right)
 \label{eq:optim_pressure}
\end{align}
for all $\theta_h \in \mathcal{V}_{h,0}^{\mathrm{f}}$. Note that \eqref{eq:optim_pressure} is satisfied for all $\theta_h \in \mathcal{V}_h^{\mathrm{f}}$ due to \eqref{eq:optim_mean_p}.

Finally, we can infer that the solutions to \eqref{eq:optim_ch1}, \eqref{eq:optim_ch2}, \eqref{eq:optim_el}, \eqref{eq:optim_flow} and \eqref{eq:optim_pressure} are equivalent to the solutions to the discrete problem \eqref{eq:ch1split}--\eqref{eq:pressuresplit}.
\end{proof}

\begin{remark}
 The spaces $\mathcal{\bar V}_h^{\mathrm{ch},n}$ and $\mathcal{\bar V}_h^{\mathrm{f},n}$ are affine. Let $\mathrm{z} \in \{\mathrm{ch}, \mathrm{f}\}$. Then, for arbitrary $\vartheta_h^1, \vartheta_h^2 \in \mathcal{\bar V}_h^{\mathrm{z},n}$ the difference satisfies $\vartheta_h^1 - \vartheta_h^2 \in \mathcal{V}_{h,0}^{\mathrm{z}}$.
\end{remark}

\begin{lemma}
Under the absence of external contributions, i.e., $R = 0$, $\bm f = 0$ and $S_\mathrm{f} = 0$, the discretization scheme \eqref{eq:model_disc} satisfies the energy dissipation property
\begin{align*}
 \energy\!\left(\pf_h^n, \strain[u_h^n], \theta_h^n\right) \leq \energy\!\left(\pf_h^{n-1}, \strain[u_h^{n-1}], \theta_h^{n-1}\right)
\end{align*}
with free energy
\begin{align*}
 \energy(\pf, \strain, \theta) = \int_\Omega \gamma \left(\frac{1}{\ell} \Psi(\pf) + \frac{\ell}{2} |\nabla \pf|^2\right) + \frac{1}{2} \big(\bm\varepsilon(\bm u) - \mathcal{T}(\pf)\big) \!:\!\mathbb{C} \big(\bm\varepsilon(\bm u) - \mathcal{T}(\pf)\big) + \frac{M}{2} (\theta - \alpha \Div \bm u)^2 \;dx.
\end{align*}
\label{lem:disc_ener_diss}
\end{lemma}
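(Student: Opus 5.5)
Our plan is to use the minimization characterization of Lemma~\ref{lem:equivalence}. With $R=0$, $\bm f=0$, $S_\mathrm{f}=0$, the affine spaces $\mathcal{\bar V}_h^{\mathrm{ch},n}$ and $\mathcal{\bar V}_h^{\mathrm{f},n}$ consist of functions with the same means as $\pf_h^{n-1}$ and $\theta_h^{n-1}$. In particular, the previous iterate $(\pf_h^{n-1}, \bm u_h^{n-1}, \theta_h^{n-1})$ is an admissible competitor in \eqref{eq:minim}. Minimality of $(\pf_h^n,\bm u_h^n,\theta_h^n)$ therefore gives
\begin{align*}
 \calH_\tau^n\!\left(\pf_h^n, \bm u_h^n, \theta_h^n\right) \leq \calH_\tau^n\!\left(\pf_h^{n-1}, \bm u_h^{n-1}, \theta_h^{n-1}\right).
\end{align*}
At the competitor, both dual-norm terms vanish. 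Since they are nonnegative at the minimizer, we may drop them on the left. The term $(\bm f,\bm u_h)$ is absent. What remains is
\begin{align*}
 \energy_\mathrm{c}\!\left(\pf_h^n, \strain[u_h^n], \theta_h^n\right) - \frac{\gamma}{\ell}\left(\Psi_e'(\pf_h^{n-1}), \pf_h^n\right) \leq \energy_\mathrm{c}\!\left(\pf_h^{n-1}, \strain[u_h^{n-1}], \theta_h^{n-1}\right) - \frac{\gamma}{\ell}\left(\Psi_e'(\pf_h^{n-1}), \pf_h^{n-1}\right).
\end{align*}

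Next we pass from $\energy_\mathrm{c}$ to $\energy$. By (A1) we have $\Psi=\Psi_c-\Psi_e$, so $\energy = \energy_\mathrm{c} - \frac{\gamma}{\ell}\int_\Omega \Psi_e(\cdot)\,dx$. Convexity and differentiability of $\Psi_e$ give pointwise
\begin{align*}
 \Psi_e(\pf_h^n) \geq \Psi_e(\pf_h^{n-1}) + \Psi_e'(\pf_h^{n-1})(\pf_h^n - \pf_h^{n-1}).
\end{align*}
Equivalently, $-\Psi_e(\pf_h^n) \le -\Psi_e(\pf_h^{n-1}) - \Psi_e'(\pf_h^{n-1})(\pf_h^n-\pf_h^{n-1})$. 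We integrate this, multiply by $\gamma/\ell$, and add it to the previous inequality. The linear terms $\frac{\gamma}{\ell}(\Psi_e'(\pf_h^{n-1}),\pf_h^n - \pf_h^{n-1})$ then cancel, and we obtain $\energy(\pf_h^n,\strain[u_h^n],\theta_h^n) \le \energy(\pf_h^{n-1},\strain[u_h^{n-1}],\theta_h^{n-1})$.

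The main point requiring care is that Lemma~\ref{lem:equivalence} is applied correctly, i.e.\ that the discrete solution is in fact a minimizer and not merely a critical point. This follows from convexity of $\calH_\tau^n$. The quadratic part is convex as a sum of squares of affine maps, assuming $\mathcal{T}$ is affine (as for Vegard's law). The $\Psi_c$ term is convex, the dual-norm terms are convex quadratics, and the remaining term is linear. If $\mathcal{T}$ is nonlinear, we would instead argue directly by testing \eqref{eq:model_disc} with $\eta_h^\pf=\mu_h^n$, $\eta_h^\mu=\pf_h^n-\pf_h^{n-1}$, $\bm\eta_h^{\bm u}=\bm u_h^n-\bm u_h^{n-1}$, $\eta_h^\theta=p_h^n$ and $\eta_h^p=\theta_h^n-\theta_h^{n-1}$. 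Convexity inequalities for $\Psi_c$, for the quadratic forms and for $-\Psi_e$ then yield the same bound, with the dissipation terms $\tau\|m^{1/2}\nabla\mu_h^n\|^2+\tau\|\kappa^{1/2}\nabla p_h^n\|^2\ge0$ discarded.
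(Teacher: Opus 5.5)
Your main argument is the paper's proof. You compare $\calH_\tau^n$ at the minimizer with the admissible old state $(\pf_h^{n-1},\bm u_h^{n-1},\theta_h^{n-1})$, then use the tangent inequality for the convex $\Psi_e$ together with $\energy=\energy_\mathrm{c}-\frac{\gamma}{\ell}\int_\Omega\Psi_e\,dx$. The paper keeps the dual-norm terms until the last line and drops them there, which makes no difference. Your caveat is also correct. Lemma~\ref{lem:equivalence} only identifies discrete solutions with critical points, so minimality needs convexity of $\calH_\tau^n$, i.e.\ an affine (under (A4) in fact linear) $\mathcal{T}$. The paper relies on this tacitly, e.g.\ through $\mathcal{T}(\pf_h^1)-\mathcal{T}(\pf_h^2)=\mathcal{T}(\pf_h^1-\pf_h^2)$ and $\mathcal{T}(0)=\bm 0$ in the proof of Theorem~\ref{thm:conv_result}.

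Your fallback for nonlinear $\mathcal{T}$, however, does not go through. Write $\bm e^k:=\strain[u_h^k]-\mathcal{T}(\pf_h^k)$. Your test functions combine the $\mathcal{T}'$-term of \eqref{eq:ch2split} with \eqref{eq:elasticitysplit} into
\[
 \big(\mathbb{C}\bm e^n,\bm e^n-\bm e^{n-1}\big)+\big(\mathbb{C}\bm e^n,\mathcal{T}(\pf_h^n)-\mathcal{T}(\pf_h^{n-1})-\mathcal{T}'(\pf_h^n)(\pf_h^n-\pf_h^{n-1})\big),
\]
and the second term vanishes only when $\mathcal{T}$ is affine. Otherwise it has no sign, and none of the convexity inequalities you invoke controls it. Absorbing it would need extra information, such as bounds on $\mathcal{T}''$ and on $\bm e^n$ in $L^\infty$ together with a step-size restriction. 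This is the same obstruction as in the minimization route. What you need is the supporting-hyperplane inequality at $(\pf_h^n,\strain[u_h^n])$ for the density $\frac12(\bm\varepsilon-\mathcal{T}(\pf)):\mathbb{C}(\bm\varepsilon-\mathcal{T}(\pf))$, i.e.\ its joint convexity in $(\pf,\bm\varepsilon)$. That fails for nonlinear $\mathcal{T}$: for the scalar analogue $\frac c2(\varepsilon-T(\pf))^2$ the Hessian determinant is $-c^2(\varepsilon-T(\pf))T''(\pf)$. So the statement should be restricted to linear $\mathcal{T}$, which is what the paper's proof actually covers.

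In that linear setting your testing argument is a valid alternative to the paper's route, and a more self-contained one. It needs neither Lemma~\ref{lem:equivalence} nor any minimality or uniqueness, only that the quintuple solves \eqref{eq:model_disc}. It also gives a quantitatively sharper inequality: $\tau\|m^{1/2}\nabla\mu_h^n\|^2+\tau\|\kappa^{1/2}\nabla p_h^n\|^2=\frac1\tau\|\pf_h^n-\pf_h^{n-1}\|^2_{\mathcal{V}_{h,m}^{\mathrm{ch},'}}+\frac1\tau\|\theta_h^n-\theta_h^{n-1}\|^2_{\mathcal{V}_{h,\kappa}^{\mathrm{f},'}}$ is twice the dissipation the minimization route produces, on top of the numerical-dissipation terms from the convexity inequalities. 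The paper's route, in exchange, is shorter and ties the dissipation property directly to the variational structure it then exploits for the alternating minimization.
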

\begin{proof}
As $R = 0$ and $S_\mathrm{f} = 0$, we obtain that $\pf_h^{n-1} \in \mathcal{\bar V}_h^{\mathrm{ch},n}$ and $\theta_h^{n-1} \in \mathcal{\bar V}_h^{\mathrm{f},n}$ respectively. Hence, by the definition of the minimization problem \eqref{eq:minim} we have
\begin{align*}
 \calH_\tau^n\!\left(\pf_h^n, \bm u_h^n, \theta_h^n\right) \leq \calH_\tau^n\!\left(\pf_h^{n-1}, \bm u_h^{n-1}, \theta_h^{n-1}\right),
\end{align*}
which is equivalent to
\begin{align*}
 &\frac{\lVert \pf_h^n - \pf_h^{n-1}\rVert_{\mathcal{V}_{h,m}^{\mathrm{ch},'}}^2}{2\tau} + \energy_\mathrm{c}(\pf_h^n, \strain[u_h^n], \theta_h^n) + \frac{\lVert \theta_h^n - \theta_h^{n-1}\rVert_{\mathcal{V}_{h,\kappa}^{\mathrm{f},'}}^2}{2\tau} - \frac{\gamma}{\ell} \left(\Psi_e'\!\left(\pf_h^{n-1}\right), \pf_h^n\right) \\
 &\qquad \leq \energy_\mathrm{c}(\pf_h^{n-1}, \strain[u_h^{n-1}], \theta_h^{n-1}) - \frac{\gamma}{\ell} \left(\Psi_e'\!\left(\pf_h^{n-1}\right), \pf_h^{n-1}\right).
\end{align*}
By the convexity of $\Psi_e$ we further have
\begin{align*}
 \Psi_e'\!\left(\pf_h^{n-1}\right) \left(\pf_h^n - \pf_h^{n-1}\right) \leq \Psi_e\!\left(\pf_h^n\right) - \Psi_e\!\left(\pf_h^{n-1}\right)
\end{align*}
and since the free energy can be decomposed as
\begin{align*}
 \energy(\pf, \strain, \theta) = \energy_\mathrm{c}(\pf, \strain, \theta) - \frac{\gamma}{\ell} \int_\Omega \Psi_e(\pf) \;dx
\end{align*}
we obtain
\begin{align*}
 \frac{\lVert \pf_h^n - \pf_h^{n-1}\rVert_{\mathcal{V}_{h,m}^{\mathrm{ch},'}}^2}{2\tau} + \energy(\pf_h^n, \strain[u_h^n], \theta_h^n) + \frac{\lVert \theta_h^n - \theta_h^{n-1}\rVert_{\mathcal{V}_{h,\kappa}^{\mathrm{f},'}}^2}{2\tau} \leq \energy(\pf_h^{n-1}, \strain[u_h^{n-1}], \theta_h^{n-1})
\end{align*}
and thus
\begin{align*}
 \energy\!\left(\pf_h^n, \strain[u_h^n], \theta_h^n\right) \leq \energy\!\left(\pf_h^{n-1}, \strain[u_h^{n-1}], \theta_h^{n-1}\right)
\end{align*}
for all $\tau$ and $n$. Due to the equivalence of the minimization problem \eqref{eq:minim} and the discretization scheme \eqref{eq:model_disc}, cf. Lemma~\ref{lem:equivalence}, we conclude the proof.
\end{proof}

\subsubsection{Alternating minimization} \label{sec:alter_min}

The iterations of the alternating minimization scheme are initialized by using the solutions at the previous time step
\begin{align*}
 \pf_h^{n,0} = \pf_h^{n-1}, \quad \bm u_h^{n,0} = \bm u_h^{n-1}, \quad \theta_h^{n,0} = \theta_h^{n-1},
\end{align*}
and subsequently the potential $\calH_\tau^n$ is minimized
\begin{align*}
 \pf_h^{n,i} &= \argmin_{\pf_h \in \mathcal{\bar V}_h^{\mathrm{ch},n}} \calH_\tau^n\!\left(\pf_h, \bm u_h^{n,i-1}, \theta_h^{n,i-1}\right) \\
 \left(\bm u_h^{n,i}, \theta_h^{n,i}\right) &= \argmin_{\bm u_h \in \mathcal{V}_h^{\bm u}, \theta_h \in \mathcal{\bar V}_h^{\mathrm{f},n}} \calH_\tau^n\!\left(\pf_h^{n,i}, \bm u_h, \theta_h\right)
\end{align*}
with iteration index $i$. We can then formulate the corresponding iterative scheme in iteration $i$ based on the variational system of equations as: \\
Given $\pf_h^{n-1} \in \mathcal{V}_h^{\mathrm{ch}}$, $\bm u_h^{n,i-1} \in \mathcal{V}_h^{\bm u}$, and $\theta_h^{n-1} \in \mathcal{V}_h^{\mathrm{f}}$, find $\pf_h^{n,i}, \mu_h^{n,i} \in \mathcal{V}_h^{\mathrm{ch}}$, $\bm u_h^{n,i} \in \mathcal{V}_h^{\bm u}$, and $\theta_h^{n,i}, p_h^{n,i} \in \mathcal{V}_h^{\mathrm{f}}$ such that
\begin{subequations}
\label{eq:model_disc_alter}
\begin{align}
\left(\pf_h^{n,i} - \pf_h^{n-1},\eta_h^\pf\right) + \left( m \nabla \mu_h^{n,i}, \nabla \eta_h^\pf\right)- \tau \left(R,\eta_h^\pf\right) &= 0, \label{eq:ch1split_alter}\\
\left(\mu_h^{n,i},\eta_h^\mu\right) - \gamma\left(\ell\left(\nabla\pf_h^{n,i},\nabla\eta^\mu_h\right)+\frac{1}{\ell}\left(\Psi_c'\!\left(\pf_h^{n,i}\right)-\Psi_e'\!\left(\pf_h^{n-1}\right),\eta_h^\mu\right)\right)\nonumber\\ + \left(\mathcal{T}'\!(\pf_h^{n,i})\!:\!\mathbb{C}\!\left(\bm\varepsilon\!\left(\bm u_h^{n,i-1}\right)-\mathcal{T}(\pf_h^{n,i})\right),\eta_h^\mu\right) &= 0, \label{eq:ch2split_alter}\\
\left(\mathbb{C}\!\left(\bm\varepsilon\!\left(\bm u_h^{n,i}\right)-\mathcal{T}\!\left(\pf_h^{n,i}\right)\right),\bm\varepsilon\!\left(\bm \eta_h^{\bm u}\right)\right) - \left(\alpha M \left(\theta_h^{n,i} - \alpha \Div \bm u_h^{n,i}\right), \Div \bm \eta_h^{\bm u}\right)-\left({\bm f},\bm \eta_h^{\bm u}\right) &= 0, \label{eq:elasticitysplit_alter}\\
\left(\theta_h^{n,i} - \theta_h^{n-1}, \eta_h^\theta\right) + \tau \left(\kappa \nabla p_h^{n,i}, \nabla \eta_h^\theta\right) - \tau \left(S_\mathrm{f}, \eta_h^\theta\right) &= 0, \label{eq:flowsplit_alter}\\
\left(p_h^{n,i}, \eta_h^p\right) - \left(M \left(\theta_h^{n,i} - \alpha \Div \bm u_h^{n,i}\right), \eta_h^p\right) &= 0 \label{eq:pressuresplit_alter}
\end{align}
\end{subequations}
for all $\eta_h^\pf, \eta_h^\mu \in \mathcal{V}_h^\mathrm{ch},$ $\bm \eta_h^{\bm u} \in \mathcal{V}_h^{\bm u}$, and $\eta_h^\theta, \eta_h^p \in \mathcal{V}_h^\mathrm{f}$. Note that we can use the spaces $\mathcal{V}_h^\mathrm{ch}$ and $\mathcal{V}_h^\mathrm{f}$ instead of the restricted spaces $\mathcal{\bar V}_h^{\mathrm{ch},n}$ and $\mathcal{\bar V}_h^{\mathrm{f},n}$ here following from the same canonical extensions as in the proof of Lemma~\ref{lem:equivalence}.

The proof of convergence for the alternating minimization scheme \eqref{eq:model_disc_alter} is based on the abstract theory presented in \cite{Both2022}. Lemma~\ref{lem:aux_conv_result} in the appendix recapitulates the appropriate result in terms of the present article.

\begin{theorem}
 The alternating minimization scheme~\eqref{eq:model_disc_alter} converges linearly in the sense that
\begin{align}
 \calH_\tau^n\!\left(\pf_h^{n,i},\bm u_h^{n,i},\theta_h^{n,i}\right) - \calH_\tau^n\!\left(\pf_h^{n},\bm u_h^{n},\theta_h^{n}\right) \leq \left(1 - \frac{\beta_\mathrm{ch}}{L_\mathrm{ch}}\right) \left(1 - \beta_\mathrm{b}\right) \Big( \calH_\tau^n\!\left(\pf_h^{n,i-1},\bm u_h^{n,i-1},\theta_h^{n,i-1}\right) - \calH_\tau^n\!\left(\pf_h^{n},\bm u_h^{n},\theta_h^{n}\right) \Big),
 \label{eq:conv_result}
\end{align}
where $\beta_\mathrm{ch} = \beta_\mathrm{b} = 1 - \left(\frac{h^2}{\tau C_\mathrm{inv}^2 C_\mathbb{C} C_\mathcal{T}^2} + \frac{\gamma \ell}{C_\Omega^2 C_\mathbb{C} C_\mathcal{T}^2} + 1\right)^{-1}$, and $L_\mathrm{ch} = 1 + \frac{\gamma}{\ell} L_{\Psi_\mathrm{c}'} \left(\frac{1}{\tau} \frac{h^2}{C_\mathrm{inv}^2} + \gamma \ell \frac{1}{C_\Omega^2} + c_\mathbb{C} c_\mathcal{T}^2\right)^{-1}$.
\label{thm:conv_result}
\end{theorem}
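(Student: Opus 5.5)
We apply the abstract two-block alternating minimization result of \cite{Both2022}, recalled as Lemma~\ref{lem:aux_conv_result} in the appendix. Write $\calH_\tau^n(\pf_h,(\bm u_h,\theta_h)) = f(\pf_h,(\bm u_h,\theta_h)) + g_\mathrm{ch}(\pf_h)$, with block variables $\pf_h\in\mathcal{\bar V}_h^{\mathrm{ch},n}$ and $(\bm u_h,\theta_h)\in\mathcal{V}_h^{\bm u}\times\mathcal{\bar V}_h^{\mathrm{f},n}$. Here $f$ collects the quadratic and linear parts: the dual-norm terms, $\energy_\mathrm{quad}$, and the linear terms involving $\Psi_e'(\pf_h^{n-1})$ and $\bm f$. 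The term $g_\mathrm{ch}=\frac{\gamma}{\ell}\int_\Omega\Psi_c(\pf_h)\,dx$ is convex with $\frac{\gamma}{\ell}L_{\Psi_c'}$-Lipschitz gradient in $L^2$. Since the feasible sets are affine, we work in the tangent spaces $\mathcal{V}_{h,0}^{\mathrm{ch}}$ and $\mathcal{V}_h^{\bm u}\times\mathcal{V}_{h,0}^{\mathrm{f}}$.

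\textbf{Block norms.} On these tangent spaces we define norms adapted to the block Hessians. For the Cahn-Hilliard block,
\[
\|\eta\|_\mathrm{ch}^2=\tfrac1\tau\|\eta\|^2_{\mathcal{V}_{h,m}^{\mathrm{ch},'}}+\gamma\ell\|\nabla\eta\|^2+\big(\mathcal{T}(\eta),\mathbb{C}\mathcal{T}(\eta)\big),
\]
using linearity of $\mathcal{T}$, or at least its Lipschitz bounds from (A4). For the Biot block, $\|(\bm v,\vartheta)\|_\mathrm{b}^2$ is the corresponding quadratic form, namely the second variation of $f$ in $(\bm u_h,\theta_h)$: $\frac1\tau\|\vartheta\|^2_{\mathcal{V}_{h,\kappa}^{\mathrm{f},'}}+(\strain[v],\mathbb{C}\strain[v])+M\|\vartheta-\alpha\Div\bm v\|^2$. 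With these norms, $f(\cdot,\bm w)$ and $f(\pf,\cdot)$ are $1$-smooth and $1$-strongly convex in their respective blocks. Adding $g_\mathrm{ch}$ then gives smoothness $L_\mathrm{ch}$ in the first block, where $\frac{\gamma}{\ell}L_{\Psi_c'}\|\eta\|^2$ must be bounded by $(L_\mathrm{ch}-1)\|\eta\|_\mathrm{ch}^2$. For this we need the lower bound
\[
\|\eta\|_\mathrm{ch}^2\ge\Big(\tfrac{h^2}{\tau C_\mathrm{inv}^2}+\tfrac{\gamma\ell}{C_\Omega^2}+c_\mathbb{C}c_\mathcal{T}^2\Big)\|\eta\|^2 .
\]
It follows from Lemma~\eqref{eq:inv_eq_dual} for the first term, Poincar\'e–Wirtinger (constant $C_\Omega$) for the gradient term, and (A3)–(A4) for the last term. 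This yields exactly the stated $L_\mathrm{ch}$.

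\textbf{Joint strong convexity (main step).} The abstract result requires $\calH_\tau^n$ to be jointly $\beta$-strongly convex relative to $\|\cdot\|_\mathrm{ch}^2+\|\cdot\|_\mathrm{b}^2$. Equivalently, we need a coupling bound of the form
\[
\big|\partial_{\pf}\partial_{(\bm u,\theta)}f[\eta,(\bm v,\vartheta)]\big|\le\sqrt{1-\beta}\,\|\eta\|_\mathrm{ch}\|(\bm v,\vartheta)\|_\mathrm{b} .
\]
The only coupling term is $-(\mathbb{C}\mathcal{T}(\eta),\strain[v])$. By Cauchy–Schwarz in the $\mathbb{C}$-inner product, and (A3)–(A4), it is bounded by $C_\mathbb{C}^{1/2}C_\mathcal{T}\|\eta\|\,(\strain[v],\mathbb{C}\strain[v])^{1/2}\le C_\mathbb{C}^{1/2}C_\mathcal{T}\|\eta\|\,\|(\bm v,\vartheta)\|_\mathrm{b}$. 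We then need to control $\|\eta\|$ relative to $\|\eta\|_\mathrm{ch}$, noting that $\|\eta\|_\mathrm{ch}$ also contains the $\mathcal{T}$-term of size at most $C_\mathbb{C}C_\mathcal{T}^2\|\eta\|^2$. Writing $A=\frac{h^2}{\tau C_\mathrm{inv}^2}+\frac{\gamma\ell}{C_\Omega^2}$, the ratio of the coupling squared to the block-norm product is at most
\[
\frac{C_\mathbb{C}C_\mathcal{T}^2}{A+C_\mathbb{C}C_\mathcal{T}^2}=\Big(\tfrac{A}{C_\mathbb{C}C_\mathcal{T}^2}+1\Big)^{-1}.
\]
Setting $1-\beta$ equal to this gives $\beta_\mathrm{ch}=\beta_\mathrm{b}$ as stated. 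The delicate point is this last estimate. It requires treating the $\mathcal{T}$-contribution in $\|\eta\|_\mathrm{ch}$ via the upper bound $C_\mathbb{C}C_\mathcal{T}^2$, rather than discarding it. This is where I expect the bookkeeping to be hardest, especially if $\mathcal{T}$ is nonlinear. In that case I would use the Lipschitz property and treat the $\mathcal{T}$-term inside the smooth-plus-convex part.

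\textbf{Conclusion.} Having verified block smoothness $(L_\mathrm{ch},1)$ and strong convexity constants $(\beta_\mathrm{ch},\beta_\mathrm{b})$, Lemma~\ref{lem:aux_conv_result} yields the contraction \eqref{eq:conv_result}. Existence and uniqueness of the minimizer $(\pf_h^n,\bm u_h^n,\theta_h^n)$ follow from strong convexity, and Lemma~\ref{lem:equivalence} identifies this minimizer with the discrete solution.
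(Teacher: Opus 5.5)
Your overall route is the paper's: the same block norms $\|\cdot\|_\mathrm{ch}$ and $\|\cdot\|_\mathrm{b}$, the same $L_\mathrm{ch}$ (via \eqref{eq:inv_eq_dual}, Poincar\'e--Wirtinger and (A3)--(A4)), $L_\mathrm{b}=1$, and Lemma~\ref{lem:aux_conv_result}. The gap is in the bridge from your coupling estimate to that lemma.

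Write $\rho^2=1-\beta$. A bound $|(\mathbb{C}\mathcal{T}(\eta),{\bm\varepsilon}(\bm v))|\le\rho\,\|\eta\|_\mathrm{ch}\|(\bm v,\vartheta)\|_\mathrm{b}$ is \emph{not} equivalent to joint $\beta$-strong convexity relative to $\|\cdot\|_\mathrm{ch}^2+\|\cdot\|_\mathrm{b}^2$. The form $a^2-2\rho ab+b^2$ has smallest eigenvalue $1-\rho<1-\rho^2$. If you use the sum norm in Lemma~\ref{lem:aux_conv_result} (so $\beta_\mathrm{ch}=\beta_\mathrm{b}=1$ and $\sigma=1-\rho$), you only get the factor $\bigl(1-(1-\rho)/L_\mathrm{ch}\bigr)\rho$, which is strictly worse than \eqref{eq:conv_result}.

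What the lemma needs, and what the paper does, is to take as full norm the energy norm
\[
\|(\eta,\bm v,\vartheta)\|^2=\|\eta\|_\mathrm{ch}^2-2\bigl(\mathbb{C}\mathcal{T}(\eta),{\bm\varepsilon}(\bm v)\bigr)+\|(\bm v,\vartheta)\|_\mathrm{b}^2 .
\]
For this norm $\sigma=1$, since the quadratic part reproduces it exactly and $\Psi_c'$ is monotone. You then prove the one-sided relations \eqref{eq:rel_norm_ch} and \eqref{eq:rel_norm_b}. These follow from your coupling bound via $2\rho ab\le\rho^2a^2+b^2$ and $2\rho ab\le a^2+\rho^2b^2$, giving $\beta_\mathrm{ch}=\beta_\mathrm{b}=1-\rho^2$. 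In the paper they are the Young inequalities with $\lambda=1$ and $\lambda=1-\beta$, respectively.

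Second, the ratio estimate does not follow from the chain you actually wrote. Once you replace $(\mathbb{C}\mathcal{T}(\eta),\mathcal{T}(\eta))^{1/2}$ by $C_\mathbb{C}^{1/2}C_\mathcal{T}\|\eta\|$, reaching $C_\mathbb{C}C_\mathcal{T}^2/(A+C_\mathbb{C}C_\mathcal{T}^2)$ would require $\|\eta\|_\mathrm{ch}^2\ge(A+C_\mathbb{C}C_\mathcal{T}^2)\|\eta\|^2$. This fails in general: the $\mathcal{T}$-term is only bounded below by $c_\mathbb{C}c_\mathcal{T}^2\|\eta\|^2$, and $C_\mathbb{C}C_\mathcal{T}^2/(A+c_\mathbb{C}c_\mathcal{T}^2)$ may exceed $1$.

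The fix is to keep $X=(\mathbb{C}\mathcal{T}(\eta),\mathcal{T}(\eta))$ in the Cauchy--Schwarz bound. Then the squared ratio is at most $X/(X+Y)$, with $Y=\tfrac1\tau\|\eta\|^2_{\mathcal{V}_{h,m}^{\mathrm{ch},'}}+\gamma\ell\|\nabla\eta\|^2\ge A\|\eta\|^2$. Since $t\mapsto t/(t+Y)$ is increasing, inserting $X\le C_\mathbb{C}C_\mathcal{T}^2\|\eta\|^2$ in numerator and denominator simultaneously gives your constant. This is exactly what the paper's splitting with $c_1+c_2+c_3=1$ achieves: only the $c_1,c_2$ portions of $(\mathbb{C}\mathcal{T}(\pf_h),\mathcal{T}(\pf_h))$ are converted into $\|\pf_h\|^2$, while the $c_3$ portion is absorbed by the $\mathcal{T}$-term of $\|\pf_h\|_\mathrm{ch}$.

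Finally, your fallback for nonlinear $\mathcal{T}$ would not help, because $\energy_\mathrm{quad}$ is then in general not convex. The paper also tacitly uses linearity of $\mathcal{T}$ (e.g.\ $\mathcal{T}(\pf_h^1)-\mathcal{T}(\pf_h^2)=\mathcal{T}(\pf_h^1-\pf_h^2)$), so you should simply assume it.
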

\begin{proof}
We define the (semi-)norms
\begin{align*}
 \lVert (\pf_h, \bm u_h, \theta_h) \rVert^2 &:= \frac{\lVert \pf_h \rVert_{\mathcal{V}_{h,m}^{\mathrm{ch},'}}^2}{\tau} + \gamma \ell \lVert \nabla\pf_h \rVert_{\vecSpL}^2 + \left(\mathbb{C}\!\left(\bm\varepsilon\!\left(\bm u_h\right)-\mathcal{T}\!\left(\pf_h\right)\right),\bm\varepsilon\!\left(\bm u_h\right)-\mathcal{T}\!\left(\pf_h\right)\right) \\ 
 &\qquad+ \frac{\lVert \theta_h \rVert_{\mathcal{V}_{h,\kappa}^{\mathrm{f},'}}^2}{\tau} + \left(M \left(\theta_h - \alpha \Div \bm u_h\right), \theta_h - \alpha \Div \bm u_h\right), \\
 \lVert \pf_h \rVert_\mathrm{ch}^2 &:= \frac{\lVert \pf_h \rVert_{\mathcal{V}_{h,m}^{\mathrm{ch},'}}^2}{\tau} + \gamma \ell \lVert \nabla\pf_h \rVert_{\vecSpL}^2 + \left(\mathbb{C}\mathcal{T}\!\left(\pf_h\right), \mathcal{T}\!\left(\pf_h\right)\right), \\
 \lVert (\bm u_h, \theta_h) \rVert_\mathrm{b}^2 &:= \left(\mathbb{C}\bm\varepsilon\!\left(\bm u_h\right), \bm\varepsilon\!\left(\bm u_h\right)\right) + \frac{\lVert \theta_h \rVert_{\mathcal{V}_{h,\kappa}^{\mathrm{f},'}}^2}{\tau} + \left(M \left(\theta_h - \alpha \Div \bm u_h\right), \theta_h - \alpha \Div \bm u_h\right)
\end{align*}
for $\pf_h \in \mathcal{V}_{h,0}^\mathrm{ch}$, $\bm u_h \in \mathcal{V}_h^{\bm u}$ and $\theta_h \in \mathcal{V}_{h,0}^{\mathrm{f}}$.
Firstly, we show the relations \eqref{eq:rel_norm_ch} and \eqref{eq:rel_norm_b} between the norms. We have
\begin{align*}
 \lVert (\pf_h, \bm u_h, \theta_h) \rVert^2 
 &= \frac{\lVert \pf_h \rVert_{\mathcal{V}_{h,m}^{\mathrm{ch},'}}^2}{\tau} + \gamma \ell \lVert \nabla\pf_h \rVert_{\vecSpL}^2 + \left(\mathbb{C}\bm\varepsilon\!\left(\bm u_h\right), \bm\varepsilon\!\left(\bm u_h\right)\right) - 2 \left(\mathbb{C}\bm\varepsilon\!\left(\bm u_h\right), \mathcal{T}\!\left(\pf_h\right)\right) + \left(\mathbb{C}\mathcal{T}\!\left(\pf_h\right), \mathcal{T}\!\left(\pf_h\right)\right) \\
 &\qquad+ \frac{\lVert \theta_h \rVert_{\mathcal{V}_{h,\kappa}^{\mathrm{f},'}}^2}{\tau} + \left(M \left(\theta_h - \alpha \Div \bm u_h\right), \theta_h - \alpha \Div \bm u_h\right)
\end{align*}
and by the Cauchy-Schwarz-type inequality, Young's inequality and the boundedness of $\mathcal{T}(\cdot)$ we get
\begin{align*}
 2 \left(\mathbb{C}\bm\varepsilon\!\left(\bm u_h\right), \mathcal{T}\!\left(\pf_h\right)\right) &\leq \lambda \left(\mathbb{C}\bm\varepsilon\!\left(\bm u_h\right), \bm\varepsilon\!\left(\bm u_h\right)\right) + \frac{1}{\lambda} \left(c_1 + c_2 + c_3\right) \left(\mathbb{C}\mathcal{T}\!\left(\pf_h\right), \mathcal{T}\!\left(\pf_h\right)\right) \\
 &\leq \lambda \left(\mathbb{C}\bm\varepsilon\!\left(\bm u_h\right), \bm\varepsilon\!\left(\bm u_h\right)\right) + \frac{1}{\lambda} (c_1 + c_2) C_\mathbb{C} C_\mathcal{T}^2 \lVert \pf_h \rVert_{L^2(\Omega)}^2 + \frac{1}{\lambda} c_3 \left(\mathbb{C}\mathcal{T}\!\left(\pf_h\right), \mathcal{T}\!\left(\pf_h\right)\right)
\end{align*}
with coefficients $0 \leq c_k \leq 1$ for $k \in \{1,2,3\}$, $c_1 + c_2 + c_3 = 1$, and $\lambda > 0$ to be chosen later.
We employ \eqref{eq:inv_eq_dual} and the Poincar\'e-Wirtinger inequality with Poincar\'e constant $C_\Omega$ to obtain
\begin{align*}
 2 \left(\mathbb{C}\bm\varepsilon\!\left(\bm u_h\right), \mathcal{T}\!\left(\pf_h\right)\right) &\leq \lambda \left(\mathbb{C}\bm\varepsilon\!\left(\bm u_h\right), \bm\varepsilon\!\left(\bm u_h\right)\right) + \frac{1}{\lambda} c_1 \frac{C_\mathrm{inv}^2}{h^2} C_\mathbb{C} C_\mathcal{T}^2 \lVert \pf_h \rVert_{\mathcal{V}_{h,m}^{\mathrm{ch},'}}^2 \\
 &\qquad + \frac{1}{\lambda} c_2 C_\Omega^2 C_\mathbb{C} C_\mathcal{T}^2 \lVert \nabla \pf_h \rVert_{\vecSpL} + \frac{1}{\lambda} c_3 \left(\mathbb{C}\mathcal{T}\!\left(\pf_h\right), \mathcal{T}\!\left(\pf_h\right)\right)
\end{align*}
and thus
\begin{align}
 \lVert (\pf_h, \bm u_h, \theta_h) \rVert^2 &\geq
 \left(\frac{1}{\tau} - \frac{1}{\lambda} c_1 \frac{C_\mathrm{inv}^2}{h^2} C_\mathbb{C} C_\mathcal{T}^2\right) \lVert \pf_h \rVert_{\mathcal{V}_{h,m}^{\mathrm{ch},'}}^2 + \left(\gamma \ell - \frac{1}{\lambda} c_2 C_\Omega^2 C_\mathbb{C} C_\mathcal{T}^2\right) \lVert \nabla \pf_h \rVert_{\vecSpL} \nonumber\\
 &\quad + \left(1 - \frac{c_3}{\lambda}\right) \left(\mathbb{C}\mathcal{T}\!\left(\pf_h\right), \mathcal{T}\!\left(\pf_h\right)\right) + \left(1 - \lambda\right) \left(\mathbb{C}\bm\varepsilon\!\left(\bm u_h\right), \bm\varepsilon\!\left(\bm u_h\right)\right) \nonumber\\
 &\qquad + \frac{\lVert \theta_h \rVert_{\mathcal{V}_{h,\kappa}^{\mathrm{f},'}}^2}{\tau} + \left(M \left(\theta_h - \alpha \Div \bm u_h\right), \theta_h - \alpha \Div \bm u_h\right).
 \label{eq:bound_full_norm}
\end{align}
In order to obtain the bound \eqref{eq:rel_norm_ch} we drop the last two quadratic contributions in \eqref{eq:bound_full_norm} yielding
\begin{align*}
 \lVert (\pf_h, \bm u_h, \theta_h) \rVert^2 &\geq
 \left(\frac{1}{\tau} - \frac{1}{\lambda} c_1 \frac{C_\mathrm{inv}^2}{h^2} C_\mathbb{C} C_\mathcal{T}^2\right) \lVert \pf_h \rVert_{\mathcal{V}_{h,m}^{\mathrm{ch},'}}^2 + \left(\gamma \ell - \frac{1}{\lambda} c_2 C_\Omega^2 C_\mathbb{C} C_\mathcal{T}^2\right) \lVert \nabla \pf_h \rVert_{\vecSpL} \\
 &\quad + \left(1 - \frac{c_3}{\lambda}\right) \left(\mathbb{C}\mathcal{T}\!\left(\pf_h\right), \mathcal{T}\!\left(\pf_h\right)\right) + \left(1 - \lambda\right) \left(\mathbb{C}\bm\varepsilon\!\left(\bm u_h\right), \bm\varepsilon\!\left(\bm u_h\right)\right)
\end{align*}
and subsequently choose
\begin{align*}
 \lambda = 1, \quad c_1 = (1 - \beta_\mathrm{ch}) \frac{h^2}{\tau C_\mathrm{inv}^2 C_\mathbb{C} C_\mathcal{T}^2}, \quad c_2 = (1 - \beta_\mathrm{ch}) \frac{\gamma \ell}{C_\Omega^2 C_\mathbb{C} C_\mathcal{T}^2}, \quad c_3 = 1 - \beta_\mathrm{ch}
\end{align*}
along with
\begin{align*}
 \beta_\mathrm{ch} = 1 - \left(\frac{h^2}{\tau C_\mathrm{inv}^2 C_\mathbb{C} C_\mathcal{T}^2} + \frac{\gamma \ell}{C_\Omega^2 C_\mathbb{C} C_\mathcal{T}^2} + 1\right)^{-1}.
\end{align*}
Similarly, in \eqref{eq:bound_full_norm} we choose
\begin{align*}
 c_1 = \lambda \frac{h^2}{\tau C_\mathrm{inv}^2 C_\mathbb{C} C_\mathcal{T}^2}, \quad c_2 = \lambda \frac{\gamma \ell}{C_\Omega^2 C_\mathbb{C} C_\mathcal{T}^2}, \quad c_3 = \lambda, \quad \lambda = \left(\frac{h^2}{\tau C_\mathrm{inv}^2 C_\mathbb{C} C_\mathcal{T}^2} + \frac{\gamma \ell}{C_\Omega^2 C_\mathbb{C} C_\mathcal{T}^2} + 1\right)^{-1} 
\end{align*}
to obtain
\begin{align*}
 \lVert (\pf_h, \bm u_h, \theta_h) \rVert^2 &\geq
 \left(1 - \lambda\right) \left(\mathbb{C}\bm\varepsilon\!\left(\bm u_h\right), \bm\varepsilon\!\left(\bm u_h\right)\right) \\
 &\qquad + \frac{\lVert \theta_h \rVert_{\mathcal{V}_{h,\kappa}^{\mathrm{f},'}}^2}{\tau} + \left(M \left(\theta_h - \alpha \Div \bm u_h\right), \theta_h - \alpha \Div \bm u_h\right) \\
 &\geq (1 - \lambda) \lVert (\bm u_h, \theta_h) \rVert_\mathrm{b}^2
\end{align*}
and by choosing $\beta_\mathrm{b} = 1 - \lambda$ we get the desired bound \eqref{eq:rel_norm_b}.

Secondly, we prove the potential related bounds \eqref{eq:H_conv}, \eqref{eq:Lip_cont_ch} and \eqref{eq:Lip_cont_b}. We have
\begin{align*}
 &\Big(\delta_{(\pf,\bm u,\theta)} \calH_\tau^n\!\left(\pf_h^1, \bm u_h^1, \theta_h^1\right) - \delta_{(\pf,\bm u,\theta)} \calH_\tau^n\!\left(\pf_h^2, \bm u_h^2, \theta_h^2\right), \left(\pf_h^1 - \pf_h^2, \bm u_h^1 - \bm u_h^2, \theta_h^1 - \theta_h^2\right)\Big) \\
 &= \left(\frac{\pf_h^1 - \pf_h^2}{\tau}, \pf_h^1 - \pf_h^2\right)_{\mathcal{V}_{h,m}^{\mathrm{ch},'}} \\
 &\quad+ \Big(\delta_{(\pf,\strain,\theta)} \energy_\mathrm{quad}\!\left(\pf_h^1, \strain[u_h^1], \theta_h^1\right) - \delta_{(\pf,\strain,\theta)} \energy_\mathrm{quad}\!\left(\pf_h^2, \strain[u_h^2], \theta_h^2\right), \left(\pf_h^1 - \pf_h^2, \strain[u_h^1 - \bm u_h^2], \theta_h^1 - \theta_h^2\right)\Big) \\
 &\qquad+ \frac{\gamma}{\ell} \int_\Omega \left(\Psi_c'(\pf_h^1) - \Psi_c'(\pf_h^2)\right) \left(\pf_h^1 - \pf_h^2\right) dx + \left(\frac{\theta_h^1 - \theta_h^2}{\tau}, \theta_h^1 - \theta_h^2\right)_{\mathcal{V}_{h,\kappa}^{\mathrm{f},'}} \\
 &\geq \frac{\lVert \pf_h^1 - \pf_h^2 \rVert_{\mathcal{V}_{h,m}^{\mathrm{ch},'}}^2}{\tau} + 2 \energy_\mathrm{quad}\!\left(\pf_h^1 - \pf_h^2, \strain[u_h^1 - \bm u_h^2], \theta_h^1 - \theta_h^2\right) + \frac{\lVert \theta_h^1 - \theta_h^2 \rVert_{\mathcal{V}_{h,\kappa}^{\mathrm{f},'}}^2}{\tau} \\
 &= \lVert (\pf_h^1 - \pf_h^2, \bm u_h^1 - \bm u_h^2, \theta_h^1 - \theta_h^2) \rVert^2
\end{align*}
for all $\pf_h^1, \pf_h^2 \in \mathcal{\bar V}_h^{\mathrm{ch},n}$, $\bm u_h^1, \bm u_h^2 \in \mathcal{V}_h^{\bm u}$, and $\theta_h^1, \theta_h^2 \in \mathcal{\bar V}_h^{\mathrm{f},n}$, where we employ the convexity of $\Psi_c(\cdot)$ and the definition of $\energy_\mathrm{quad}$, yielding the desired strong convexity of $\calH_\tau^n$ \eqref{eq:H_conv} with constant $\sigma = 1$.

Moreover, we have
\begin{align*}
 &\Big(\delta_\pf \calH_\tau^n\!\left(\pf_h^1, \bm u_h, \theta_h\right) - \delta_\pf \calH_\tau^n\!\left(\pf_h^2, \bm u_h, \theta_h\right), \pf_h^1 - \pf_h^2\Big) \\
 &= \left(\frac{\pf_h^1 - \pf_h^2}{\tau}, \pf_h^1 - \pf_h^2\right)_{\mathcal{V}_{h,m}^{\mathrm{ch},'}} + \Big(\delta_\pf \energy_\mathrm{quad}\!\left(\pf_h^1, \bm u_h, \theta_h\right) - \delta_\pf \energy_\mathrm{quad}\!\left(\pf_h^2, \bm u_h, \theta_h\right), \pf_h^1 - \pf_h^2\Big) \\
 &\qquad+ \frac{\gamma}{\ell} \int_\Omega \left(\Psi_c'(\pf_h^1) - \Psi_c'(\pf_h^2)\right) \left(\pf_h^1 - \pf_h^2\right) dx \\
 &\leq \frac{\lVert \pf_h^1 - \pf_h^2 \rVert_{\mathcal{V}_{h,m}^{\mathrm{ch},'}}^2}{\tau} + 2 \energy_\mathrm{quad}\!\left(\pf_h^1 - \pf_h^2, \bm 0, 0\right) + \frac{\gamma}{\ell} L_{\Psi_\mathrm{c}'} \lVert \pf_h^1 - \pf_h^2 \rVert_{L^2(\Omega)}^2 \\
 &= \lVert \pf_h^1 - \pf_h^2 \rVert_\mathrm{ch}^2 + \frac{\gamma}{\ell} L_{\Psi_\mathrm{c}'} \lVert \pf_h^1 - \pf_h^2 \rVert_{L^2(\Omega)}^2
\end{align*}
for all $\pf_h^1, \pf_h^2 \in \mathcal{\bar V}_h^{\mathrm{ch},n}$, $\bm u_h \in \mathcal{V}_h^{\bm u}$, and $\theta_h \in \mathcal{\bar V}_h^{\mathrm{f},n}$, using the Lipschitz continuity of $\Psi_\mathrm{c}'(\cdot)$ and the definition of $\energy_\mathrm{quad}$. By \eqref{eq:inv_eq_dual}, the Poincar\'e-Wirtinger inequality and the boundedness of $\mathcal{T}(\cdot)$ we get
\begin{align*}
 \lVert \pf_h^1 -\pf_h^2 \rVert_\mathrm{ch}^2 &= \frac{\lVert \pf_h^1 - \pf_h^2 \rVert_{\mathcal{V}_{h,m}^{\mathrm{ch},'}}^2}{\tau} + \gamma \ell \lVert \nabla \pf_h^1 - \nabla \pf_h^2 \rVert_{\vecSpL}^2 + \left(\mathbb{C}\mathcal{T}\!\left(\pf_h^1 - \pf_h^2\right), \mathcal{T}\!\left(\pf_h^1 - \pf_h^2\right)\right) \\
 &\geq \left(\frac{1}{\tau} \frac{h^2}{C_\mathrm{inv}^2} + \gamma \ell \frac{1}{C_\Omega^2} + c_\mathbb{C} c_\mathcal{T}^2\right) \lVert \pf_h^1 - \pf_h^2 \rVert_{L^2(\Omega)}^2
\end{align*}
and hence
\begin{align*}
 \Big(\delta_\pf \calH_\tau^n\!\left(\pf_h^1, \bm u_h, \theta_h\right) - \delta_\pf \calH_\tau^n\!\left(\pf_h^2, \bm u_h, \theta_h\right), \pf_h^1 - \pf_h^2\Big) \leq L_\mathrm{ch} \lVert \pf_h^1 -\pf_h^2 \rVert_\mathrm{ch}^2
\end{align*}
with constant $L_\mathrm{ch} = 1 + \frac{\gamma}{\ell} L_{\Psi_\mathrm{c}'} \left(\frac{1}{\tau} \frac{h^2}{C_\mathrm{inv}^2} + \gamma \ell \frac{1}{C_\Omega^2} + c_\mathbb{C} c_\mathcal{T}^2\right)^{-1}$, i.e., \eqref{eq:Lip_cont_ch}. \\
Similarly, we have
\begin{align*}
 &\Big(\delta_{(\bm u,\theta)}\calH_\tau^n\!\left(\pf_h, \bm u_h^1, \theta_h^1\right) - \delta_{(\bm u,\theta)}\calH_\tau^n\!\left(\pf_h, \bm u_h^2, \theta_h^2\right), \left(\bm u_h^1 - \bm u_h^2, \theta_h^1 - \theta_h^2\right)\Big) \\
 &= \Big(\delta_{(\strain,\theta)} \energy_\mathrm{quad}\!\left(\pf_h, \strain[u_h^1], \theta_h^1\right) - \delta_{(\strain,\theta)} \energy_\mathrm{quad}\!\left(\pf_h, \strain[u_h^2], \theta_h^2\right), \left(\strain[u_h^1 - \bm u_h^2], \theta_h^1 - \theta_h^2\right)\Big) + \left(\frac{\theta_h^1 - \theta_h^2}{\tau}, \theta_h^1 - \theta_h^2\right)_{\mathcal{V}_{h,\kappa}^{\mathrm{f},'}} \\
 &= 2 \energy_\mathrm{quad}\!\left(0, \strain[u_h^1 - \bm u_h^2], \theta_h^1 - \theta_h^2\right) + \frac{\lVert \theta_h^1 - \theta_h^2 \rVert_{\mathcal{V}_{h,\kappa}^{\mathrm{f},'}}^2}{\tau} \\
 &= \lVert (\bm u_h^1 - \bm u_h^2, \theta_h^1 - \theta_h^2) \rVert_\mathrm{b}^2
\end{align*}
for all $\pf_h \in \mathcal{\bar V}_h^{\mathrm{ch},n}$, $\bm u_h^1, \bm u_h^2 \in \mathcal{V}_h^{\bm u}$, and $\theta_h^1, \theta_h^2 \in \mathcal{\bar V}_h^{\mathrm{f},n}$, where we use that $\mathcal{T}(0) = \bm 0$, yielding \eqref{eq:Lip_cont_b} with $L_\mathrm{b} = 1$.

Finally, employing Lemma~\ref{lem:aux_conv_result} we obtain the desired convergence result \eqref{eq:conv_result}.
\end{proof}

\subsection{State-dependent material parameters} \label{sec:state-dependent}
For state-dependent material parameters $M(\pf)$, $\alpha(\pf)$ and $\mathbb{C}(\pf)$ we introduce a semi-implicit discretization of \eqref{eq:model_var} that corresponds to a convex minimization problem, allowing one to similarly prove convergence of the corresponding alternating minimization scheme.

\subsubsection{Discretization}

The fully discrete problem corresponding to \eqref{eq:model_var} reads: \\
Given $\pf_h^{n-1} \in \mathcal{V}_h^{\mathrm{ch}}$ and $\theta_h^{n-1} \in \mathcal{V}_h^{\mathrm{f}}$, find $\pf_h^{n}, \mu_h^{n} \in \mathcal{V}_h^{\mathrm{ch}}$, $\bm u_h^{n} \in \mathcal{V}_h^{\bm u}$, and $\theta_h^{n}, p_h^n \in \mathcal{V}_h^{\mathrm{f}}$ such that
\begin{subequations}
\label{eq:model_disc_sd}
\begin{align}
\left(\pf_h^{n} - \pf_h^{n-1},\eta_h^\pf\right) + \tau \left(m \nabla \mu_h^{n}, \nabla \eta_h^\pf\right) - \tau \left(R,\eta_h^\pf\right) &= 0, \label{eq:ch1split_sd}\\
\left(\mu_h^{n},\eta_h^\mu\right) - \gamma\left(\ell\left(\nabla\pf_h^{n},\nabla\eta^\mu_h\right)+\frac{1}{\ell}\left(\Psi_c'\!\left(\pf_h^{n}\right)-\Psi_e'\!\left(\pf_h^{n-1}\right),\eta_h^\mu\right)\right) - \left(\mathcal{E}_{\mathrm{e},\pf}^{\mathrm{si}}(\pf_h^n, \strain[u_h^n]; \pf_h^{n-1}, \strain[u_h^{n-1}]),\eta_h^\mu\right)\nonumber\\ - \left(\mathcal{E}_{\mathrm{f},\pf}^{\mathrm{si}}(\pf_h^n, \strain[u_h^n], \theta_h^n; \pf_h^{n-1}, \strain[u_h^{n-1}], \theta_h^{n-1}),\eta_h^\mu\right) &= 0, \label{eq:ch2split_sd}\\
\left(\mathbb{C}(\pf_h^{n-1})\left(\bm\varepsilon\!\left(\bm u_h^{n}\right)-\mathcal{T}\!\left(\pf_h^{n}\right)\right),\bm\varepsilon\!\left(\bm \eta_h^{\bm u}\right)\right) - \left(\alpha(\pf_h^{n}) M(\pf_h^{n-1}) \left(\theta_h^n - \alpha(\pf_h^{n}) \Div \bm u_h^n\right), \Div \bm \eta_h^{\bm u}\right)-\left({\bm f},\bm \eta_h^{\bm u}\right) &= 0, \label{eq:elasticitysplit_sd}\\
\left(\theta_h^{n} - \theta_h^{n-1}, \eta_h^\theta\right) + \tau \left(\kappa \nabla p_h^n, \nabla \eta_h^\theta\right) - \tau \left(S_\mathrm{f}, \eta_h^\theta\right) &= 0, \label{eq:flowsplit_sd}\\
\left(p_h^n, \eta_h^p\right) - \left(M(\pf_h^{n-1}) \left(\theta_h^n - \alpha(\pf_h^{n}) \Div \bm u_h^n\right), \eta_h^p\right) &= 0 \label{eq:pressuresplit_sd}
\end{align}
\end{subequations}
for all $\eta_h^\pf, \eta_h^\mu \in \mathcal{V}_h^\mathrm{ch},$ $\bm \eta_h^{\bm u} \in \mathcal{V}_h^{\bm u}$, and $\eta_h^\theta, \eta_h^{p} \in \mathcal{V}_h^\mathrm{f}$, where we introduce
\begin{align*}
 \mathcal{E}_{\mathrm{e},\pf}^{\mathrm{si}}(\pf_h^n, \strain[u_h^n]; \pf_h^{n-1}, \strain[u_h^{n-1}]) = - \mathcal{T}'\!(\pf_h^n)\!:\!\mathbb{C}(\pf_h^{n-1})\big(\bm\varepsilon\!\left(\bm u_h^n\right)-\mathcal{T}(\pf_h^n)\big) + \frac{1}{2} \big(\strain[u_h^{n-1}] - \mathcal{T}(\pf_h^{n-1})\big)\!:\!\mathbb{C}'\!(\pf_h^{n-1})\big(\strain[u_h^{n-1}] - \mathcal{T}(\pf_h^{n-1})\big)
\end{align*}
and
\begin{align*}
 \mathcal{E}_{\mathrm{f},\pf}^{\mathrm{si}}(\pf_h^n, \strain[u_h^n], \theta_h^n; \pf_h^{n-1}, \strain[u_h^{n-1}], \theta_h^{n-1}) = \frac{M'(\pf_h^{n-1})}{2} \left(\theta_h^{n-1} - \alpha(\pf_h^{n-1}) \Div \bm u_h^{n-1}\right)^2 - \alpha'(\pf_h^n) M(\pf_h^{n-1}) \left(\theta_h^n - \alpha(\pf_h^n) \Div \bm u_h^n\right) \Div \bm u_h^n.
\end{align*}
\begin{remark}
The proposed semi-implicit time discretization is consistent as
\begin{align*}
 \delta_\pf \mathcal{E}_{\mathrm{e}}(\pf, \strain) = \mathcal{E}_{\mathrm{e},\pf}^{\mathrm{si}}(\pf, \strain; \pf, \strain)
\end{align*}
and
\begin{align*}
 \delta_\pf \mathcal{E}_{\mathrm{f}}(\pf, \strain, \theta) = \mathcal{E}_{\mathrm{f},\pf}^{\mathrm{si}}(\pf, \strain, \theta; \pf, \strain, \theta).
\end{align*}
\end{remark}

\begin{lemma}
The solution to the discrete problem~\eqref{eq:model_disc_sd} is equivalent to the solution of the following minimization problem: \\
Given $\pf_h^{n-1} \in \mathcal{V}_h^\mathrm{ch}$, $\bm u_h^{n-1} \in \mathcal{V}_h^{\bm u}$ and $\theta_h^{n-1} \in \mathcal{V}_h^{\mathrm{f}}$, solve
\begin{align}
 \left(\pf_h^n, \bm u_h^n, \theta_h^n\right) = \argmin_{\pf_h \in \mathcal{\bar V}_h^{\mathrm{ch},n}, \bm u_h \in \mathcal{V}_h^{\bm u}, \theta_h \in \mathcal{\bar V}_h^{\mathrm{f},n}} \tilde\calH_\tau^n\!\left(\pf_h, \bm u_h, \theta_h\right)
\label{eq:minim_sd}
\end{align}
with the potential
\begin{align*}
 \tilde\calH_\tau^n\!\left(\pf_h, \bm u_h, \theta_h\right) := &\frac{\lVert \pf_h - \pf_h^{n-1} - \tau R \rVert_{\mathcal{V}_{h,m}^{\mathrm{ch},'}}^2}{2\tau} + \tilde\energy_\mathrm{c}(\pf_h, \strain[u_h], \theta_h; \pf_h^{n-1}) + \frac{\lVert \theta_h - \theta_h^{n-1} - \tau S_\mathrm{f} \rVert_{\mathcal{V}_{h,\kappa}^{\mathrm{f},'}}^2}{2\tau} \\
 &+ \left(\tilde{\mathcal{E}}_e(\pf_h^{n-1}, \strain[u_h^{n-1}], \theta_h^{n-1}),\pf_h\right) - \left({\bm f}, \bm u_h\right)
\end{align*}
with convex energy
\begin{align*}
 \tilde\energy_\mathrm{c}(\pf_h, \strain[u_h], \theta_h; \pf_h^{n-1}) := \tilde\energy_\mathrm{quad}(\pf_h, \strain[u_h], \theta_h; \pf_h^{n-1}) + \frac{\gamma}{\ell} \int_\Omega \Psi_c(\pf_h) \;dx,
\end{align*}
where
\begin{align*}
 \tilde\energy_\mathrm{quad}(\pf_h, \strain[u_h], \theta_h; \pf_h^{n-1}) := &\int_\Omega \frac{\gamma \ell}{2} |\nabla \pf_h|^2 + \frac{1}{2} \big(\bm\varepsilon(\bm u_h) - \mathcal{T}(\pf_h)\big) \!:\!\mathbb{C}\!\left(\pf^{n-1}_h\right) \big(\bm\varepsilon(\bm u_h) - \mathcal{T}(\pf_h)\big) \\ 
 &+ \frac{M\!\left(\pf_h^{n-1}\right)}{2} (\theta_h - \alpha(\pf_h) \Div \bm u_h)^2 \;dx
\end{align*}
and with
\begin{align*}
 \tilde{\mathcal{E}}_e(\pf_h^{n-1}, \strain[u_h^{n-1}], \theta_h^{n-1}) := -& \frac{\gamma}{\ell}\Psi'_e\left(\pf_h^{n-1}\right) + \frac{1}{2}\left(\strain[u_h^{n-1}]-\mathcal{T}\!\left(\pf_h^{n-1}\right)\right):\mathbb{C}'\!\left(\pf_h^{n-1}\right) \left(\strain[u_h^{n-1}]-\mathcal{T}\!\left(\pf_h^{n-1}\right)\right) \\
 &+ \frac{M'\!\left(\pf_h^{n-1}\right)}{2}\left(\theta_h^{n-1}-\alpha\!\left(\pf_h^{n-1}\right)\nabla\cdot\bm u_h^{n-1}\right)^2.
\end{align*}
\label{lem:equivalence_sd}
\end{lemma}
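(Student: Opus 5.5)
The proof mirrors that of Lemma~\ref{lem:equivalence}: compute the variational derivatives of $\tilde\calH_\tau^n$, write down the first-order optimality conditions on the affine spaces $\mathcal{\bar V}_h^{\mathrm{ch},n}$, $\mathcal{V}_h^{\bm u}$, $\mathcal{\bar V}_h^{\mathrm{f},n}$, and then recover $\mu_h^n$ and $p_h^n$ through the dual inner products \eqref{eq:inner_dual_ch_f}. The only new ingredient is that the coefficients $\mathbb{C}(\pf_h^{n-1})$ and $M(\pf_h^{n-1})$ are frozen, while $\alpha(\pf_h)$ and $\mathcal{T}(\pf_h)$ remain implicit. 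The frozen coefficients are fixed data, so they are not differentiated with respect to $\pf_h$.

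First I compute the derivatives of $\tilde\energy_\mathrm{quad}$. With respect to $\pf$, differentiating the elastic term gives $-\mathcal{T}'(\pf_h):\mathbb{C}(\pf_h^{n-1})(\strain[u_h]-\mathcal{T}(\pf_h))$. Differentiating the hydraulic term gives $-\alpha'(\pf_h)M(\pf_h^{n-1})(\theta_h-\alpha(\pf_h)\Div\bm u_h)\Div\bm u_h$, and the gradient term gives $-\gamma\ell\Delta\pf_h$ in the weak sense. Adding $\frac{\gamma}{\ell}\Psi_c'(\pf_h)$ and the explicit linear term $\tilde{\mathcal{E}}_e(\pf_h^{n-1},\dots)$, the $\pf$-derivative of $\tilde\calH_\tau^n$ reproduces exactly the three explicit contributions in \eqref{eq:ch2split_sd}: $-\frac{\gamma}{\ell}\Psi_e'(\pf_h^{n-1})$, the $\mathbb{C}'$ term of $\mathcal{E}^{\mathrm{si}}_{\mathrm{e},\pf}$ and the $M'$ term of $\mathcal{E}^{\mathrm{si}}_{\mathrm{f},\pf}$. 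Together with the implicit parts this yields $-\mathcal{E}^{\mathrm{si}}_{\mathrm{e},\pf}-\mathcal{E}^{\mathrm{si}}_{\mathrm{f},\pf}$ up to sign conventions. The derivative with respect to $\strain$ is $\mathbb{C}(\pf_h^{n-1})(\strain[u_h]-\mathcal{T}(\pf_h))-\alpha(\pf_h)M(\pf_h^{n-1})(\theta_h-\alpha(\pf_h)\Div\bm u_h)\bm I$, which gives \eqref{eq:elasticitysplit_sd}. The derivative with respect to $\theta$ is $M(\pf_h^{n-1})(\theta_h-\alpha(\pf_h)\Div\bm u_h)$, which matches \eqref{eq:pressuresplit_sd}.

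Next I write the optimality conditions against test functions in $\mathcal{V}_{h,0}^{\mathrm{ch}}$, $\mathcal{V}_h^{\bm u}$ and $\mathcal{V}_{h,0}^{\mathrm{f}}$, exactly as in \eqref{eq:optim_ch}--\eqref{eq:optim_f}. Since $\frac{\pf_h^n-\pf_h^{n-1}}{\tau}-R\in\mathcal{V}_{h,0}^{\mathrm{ch}}$, the dual inner product equals $(-\mu_h^n,\cdot)$, where $\mu_h^n$ solves the analogue of \eqref{eq:optim_ch1}, which is \eqref{eq:ch1split_sd}. I fix its mean to be the mean of the remaining $\pf$-derivative, as in \eqref{eq:optim_mean_mu} and Remark~\ref{rem:inner_dual_ch_f}. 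This lets the reduced condition hold for all test functions in $\mathcal{V}_h^{\mathrm{ch}}$, giving \eqref{eq:ch2split_sd}. The same argument with $p_h^n$ gives \eqref{eq:flowsplit_sd} and \eqref{eq:pressuresplit_sd}. For the converse, a solution of \eqref{eq:model_disc_sd} has the correct means: test \eqref{eq:ch1split_sd} and \eqref{eq:flowsplit_sd} with constants. Its triple then satisfies the optimality conditions.

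The step I expect to be the main obstacle is upgrading "stationary point" to "minimizer", which requires convexity of $\tilde\calH_\tau^n$; the lemma calls $\tilde\energy_\mathrm{c}$ convex. The hydraulic term $\frac{M(\pf_h^{n-1})}{2}(\theta_h-\alpha(\pf_h)\Div\bm u_h)^2$ is not jointly convex for general nonlinear $\alpha$, and $\mathcal{T}$ is likewise only assumed Lipschitz. I would therefore state the equivalence at the level of first-order conditions, which is what is proved in Lemma~\ref{lem:equivalence}. Where convexity is genuinely needed, I would invoke it as a hypothesis: affine $\alpha$ and $\mathcal{T}$, or convexity of the composite map. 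Under that hypothesis every stationary point on the affine constraint set is a global minimizer, and the equivalence follows.
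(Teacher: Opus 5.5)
Your route is the paper's, whose proof here is only ``analogous to the proof of Lemma~\ref{lem:equivalence}'': differentiate $\tilde\calH_\tau^n$, write the optimality conditions on the affine sets, and recover $\mu_h^n$, $p_h^n$ from the dual inner products with the means fixed as in Remark~\ref{rem:inner_dual_ch_f}. Your derivatives are correct, and there is no sign ambiguity. The $\pf$-derivative of $\tilde\energy_\mathrm{c}+(\tilde{\mathcal{E}}_e,\pf_h)$ is $-\gamma\ell\Delta\pf_h+\frac{\gamma}{\ell}\bigl(\Psi_c'(\pf_h)-\Psi_e'(\pf_h^{n-1})\bigr)+\mathcal{E}^{\mathrm{si}}_{\mathrm{e},\pf}+\mathcal{E}^{\mathrm{si}}_{\mathrm{f},\pf}$, which is exactly what \eqref{eq:ch2split_sd} says $\mu_h^n$ equals. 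Testing \eqref{eq:ch1split_sd} and \eqref{eq:flowsplit_sd} with constants is a correct, explicit version of a step the paper skips. You are also right that ``solution of \eqref{eq:model_disc_sd} $\Rightarrow$ minimizer'' needs convexity, and strict convexity for the argmin to be a single point. The paper only asserts this by calling $\tilde\energy_\mathrm{c}$ convex. In Lemma~\ref{lem:equivalence} it holds because all coefficients are constant and $\mathcal{T}$ is tacitly linear.

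The error is in your diagnosis and fix for $\alpha$: the obstruction is $\alpha'\neq 0$, not nonlinearity of $\alpha$. Set $d=\Div\bm u$, $w=\theta-\alpha(\pf)\,d$, and freeze $M=M(\pf_h^{n-1})$. The hydraulic integrand $\frac{M}{2}w^2$ then has Hessian $M\,\nabla w\,\nabla w^{T}+M\,w\,\nabla^2 w$ in $(\pf,d,\theta)$. Because $\alpha(\pf)\,d$ is bilinear, $\nabla^2 w$ carries off-diagonal entries $-\alpha'$ in the $(\pf,d)$ block even for affine $\alpha$. On the plane $\nabla w\cdot x=0$ the form reduces to $-2Mw\alpha'x_\pf x_d$, which is indefinite whenever $w\alpha'\neq 0$. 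For example, $(\theta-\pf d)^2$ at $(0,0,1)$ has $(\pf,d)$-Hessian $\begin{pmatrix}0&-2\\-2&0\end{pmatrix}$. So affine $\alpha$ does not give joint convexity; constant $\alpha$ together with affine $\mathcal{T}$ does. Your hypothesis survives only because the global bounds in (A2) force an affine $\alpha$ to be constant, so it is right for the wrong reason.

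For genuinely $\pf$-dependent $\alpha$, the defensible content of the lemma is what you fall back on. Minimizers of \eqref{eq:minim_sd} solve \eqref{eq:model_disc_sd}, and solutions of \eqref{eq:model_disc_sd} are precisely the critical points of $\tilde\calH_\tau^n$ on the affine set. Promoting them to global minimizers would need the $\tau^{-1}$ dual-norm and elastic terms to absorb cross terms proportional to $\alpha'(\pf_h)\Div\bm u_h$ and $\alpha'(\pf_h)\bigl(\theta_h-\alpha(\pf_h)\Div\bm u_h\bigr)$. These are unbounded in the state, so such an argument could only be local, and neither you nor the paper provides it.
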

\begin{proof}
Analogous to the proof of Lemma~\ref{lem:equivalence}.
\end{proof}

\begin{remark}[Energy dissipation] \label{rem:disc_ener_diss_sd}
Concerning the evolution of the (state-dependent) free energy 
\begin{align*}
 \energy(\pf, \strain, \theta) = \int_\Omega \gamma \left(\frac{1}{\ell} \Psi(\pf) + \frac{\ell}{2} |\nabla \pf|^2\right) + \frac{1}{2} \big(\bm\varepsilon(\bm u) - \mathcal{T}(\pf)\big) \!:\!\mathbb{C}(\pf) \big(\bm\varepsilon(\bm u) - \mathcal{T}(\pf)\big) + \frac{M(\pf)}{2} (\theta - \alpha(\pf) \Div \bm u)^2 \;dx,
\end{align*}
we carefully follow the same steps as in the proof of Lemma~\ref{lem:disc_ener_diss}, in particular using that following from \eqref{eq:minim_sd} 
\begin{align*}
 \tilde\calH_\tau^n\!\left(\pf_h^n, \bm u_h^n, \theta_h^n\right) \leq \tilde\calH_\tau^n\!\left(\pf_h^{n-1}, \bm u_h^{n-1}, \theta_h^{n-1}\right),
\end{align*}
and after some reformulations we arrive at
\begin{align*}
 &\frac{\lVert \pf_h^n - \pf_h^{n-1}\rVert_{\mathcal{V}_{h,m}^{\mathrm{ch},'}}^2}{2\tau} + \energy(\pf_h^n, \strain[u_h^n], \theta_h^n) + \frac{\lVert \theta_h^n - \theta_h^{n-1}\rVert_{\mathcal{V}_{h,\kappa}^{\mathrm{f},'}}^2}{2\tau} \\
 &\quad+ \left(\frac{1}{2}\left(\strain[u_h^{n-1}]-\mathcal{T}\!\left(\pf_h^{n-1}\right)\right):\mathbb{C}'\!\left(\pf_h^{n-1}\right) \left(\strain[u_h^{n-1}]-\mathcal{T}\!\left(\pf_h^{n-1}\right)\right), \pf_h^n - \pf_h^{n-1}\right) \\
 &\qquad- \int_\Omega \frac{1}{2} \big(\bm\varepsilon(\bm u_h^n) - \mathcal{T}(\pf_h^n)\big) \!:\!\left(\mathbb{C}\!\left(\pf_h^n\right) - \mathbb{C}\!\left(\pf^{n-1}_h\right)\right) \big(\bm\varepsilon(\bm u_h^n) - \mathcal{T}(\pf_h^n)\big) \;dx \\
 &\quad+ \left(\frac{M'\!\left(\pf_h^{n-1}\right)}{2}\left(\theta_h^{n-1}-\alpha\!\left(\pf_h^{n-1}\right)\nabla\cdot\bm u_h^{n-1}\right)^2, \pf_h^n - \pf_h^{n-1}\right) \\
 &\qquad- \int_\Omega \frac{M\!\left(\pf_h^n\right) - M\!\left(\pf_h^{n-1}\right)}{2} (\theta_h^n - \alpha(\pf_h^n) \Div \bm u_h^n)^2 \;dx \\
 &\leq \energy\!\left(\pf_h^{n-1}, \strain[u_h^{n-1}], \theta_h^{n-1}\right).
\end{align*}
We note that even by reducing the time-step size $\tau$, we do not have control over the strain-related terms. This may lead to a non-dissipative behavior which we further investigate in our numerical experiments in Section~\ref{sec:experiments}.
\end{remark}

\subsubsection{Alternating minimization} \label{sec:alter_min_sd}

For the $i$-th iteration of the alternating minimization scheme we minimize the potential $\tilde\calH_\tau^n$ as
\begin{align*}
 \pf_h^{n,i} &= \argmin_{\pf_h \in \mathcal{\bar V}_h^{\mathrm{ch},n}} \tilde\calH_\tau^n\!\left(\pf_h, \bm u_h^{n,i-1}, \theta_h^{n,i-1}\right) \\
 \left(\bm u_h^{n,i}, \theta_h^{n,i}\right) &= \argmin_{\bm u_h \in \mathcal{V}_h^{\bm u}, \theta_h \in \mathcal{\bar V}_h^{\mathrm{f},n}} \tilde\calH_\tau^n\!\left(\pf_h^{n,i}, \bm u_h, \theta_h\right)
\end{align*}
and based on the variational system of equations we obtain: \\
Given $\pf_h^{n-1} \in \mathcal{V}_h^{\mathrm{ch}}$, $\bm u_h^{n-1}, \bm u_h^{n,i-1} \in \mathcal{V}_h^{\bm u}$, and $\theta_h^{n-1}, \theta_h^{n,i-1} \in \mathcal{V}_h^{\mathrm{f}}$, find $\pf_h^{n,i}, \mu_h^{n,i} \in \mathcal{V}_h^{\mathrm{ch}}$, $\bm u_h^{n,i} \in \mathcal{V}_h^{\bm u}$, and $\theta_h^{n,i}, p_h^{n,i} \in \mathcal{V}_h^{\mathrm{f}}$ such that
\begin{subequations}
\label{eq:model_disc_alter_sd}
\begin{align}
\left(\pf_h^{n,i} - \pf_h^{n-1},\eta_h^\pf\right) + \left( m \nabla \mu_h^{n,i}, \nabla \eta_h^\pf\right)- \tau \left(R,\eta_h^\pf\right) &= 0, \label{eq:ch1split_alter_sd}\\
\left(\mu_h^{n,i},\eta_h^\mu\right) - \gamma\left(\ell\left(\nabla\pf_h^{n,i},\nabla\eta^\mu_h\right)+\frac{1}{\ell}\left(\Psi_c'\!\left(\pf_h^{n,i}\right)-\Psi_e'\!\left(\pf_h^{n-1}\right),\eta_h^\mu\right)\right)\nonumber\\ - \left(\mathcal{E}_{\mathrm{e},\pf}^{\mathrm{si}}(\pf_h^{n,i}, \strain[u_h^{n,i-1}]; \pf_h^{n-1}, \strain[u_h^{n-1}]),\eta_h^\mu\right) - \left(\mathcal{E}_{\mathrm{f},\pf}^{\mathrm{si}}(\pf_h^{n,i}, \strain[u_h^{n,i-1}], \theta_h^{n,i-1}; \pf_h^{n-1}, \strain[u_h^{n-1}], \theta_h^{n-1}),\eta_h^\mu\right) &= 0, \label{eq:ch2split_alter_sd}\\
\left(\mathbb{C}(\pf_h^{n-1}) \left(\bm\varepsilon\!\left(\bm u_h^{n,i}\right)-\mathcal{T}\!\left(\pf_h^{n,i}\right)\right),\bm\varepsilon\!\left(\bm \eta_h^{\bm u}\right)\right) - \left(\alpha(\pf_h^{n,i}) M(\pf_h^{n-1}) \left(\theta_h^{n,i} - \alpha(\pf_h^{n,i}) \Div \bm u_h^{n,i}\right), \Div \bm \eta_h^{\bm u}\right)-\left({\bm f},\bm \eta_h^{\bm u}\right) &= 0, \label{eq:elasticitysplit_alter_sd}\\
\left(\theta_h^{n,i} - \theta_h^{n-1}, \eta_h^\theta\right) + \tau \left(\kappa \nabla p_h^{n,i}, \nabla \eta_h^\theta\right) - \tau \left(S_\mathrm{f}, \eta_h^\theta\right) &= 0, \label{eq:flowsplit_alter_sd}\\
\left(p_h^{n,i}, \eta_h^p\right) - \left(M(\pf_h^{n-1})\left(\theta_h^{n,i} - \alpha(\pf_h^{n,i}) \Div \bm u_h^{n,i}\right), \eta_h^p\right) &= 0 \label{eq:pressuresplit_alter_sd}
\end{align}
\end{subequations}
for all $\eta_h^\pf, \eta_h^\mu \in \mathcal{V}_h^\mathrm{ch},$ $\bm \eta_h^{\bm u} \in \mathcal{V}_h^{\bm u}$, and $\eta_h^\theta, \eta_h^p \in \mathcal{V}_h^\mathrm{f}$.

\begin{theorem}
 The alternating minimization scheme~\eqref{eq:model_disc_alter_sd} converges linearly in the sense that
\begin{align}
 \tilde\calH_\tau^n\!\left(\pf_h^{n,i},\bm u_h^{n,i},\theta_h^{n,i}\right) - \tilde\calH_\tau^n\!\left(\pf_h^{n},\bm u_h^{n},\theta_h^{n}\right) \leq \left(1 - \frac{\beta_\mathrm{ch}}{L_\mathrm{ch}}\right) \left(1 - \frac{\beta_\mathrm{b}}{L_\mathrm{b}}\right) \Big( \tilde\calH_\tau^n\!\left(\pf_h^{n,i-1},\bm u_h^{n,i-1},\theta_h^{n,i-1}\right) - \tilde\calH_\tau^n\!\left(\pf_h^{n},\bm u_h^{n},\theta_h^{n}\right) \Big),
 \label{eq:conv_result_sd}
\end{align}
where $\beta_\mathrm{ch} = 1 - \left(\frac{h^2}{\tau C_\mathrm{inv}^2 C_\mathbb{C} C_\mathcal{T}^2} + \frac{\gamma \ell}{C_\Omega^2 C_\mathbb{C} C_\mathcal{T}^2} + 1\right)^{-1}$, $\beta_\mathrm{b} = \min\left\{1 - \left(\frac{h^2}{\tau C_\mathrm{inv}^2 C_\mathbb{C} C_\mathcal{T}^2} + \frac{\gamma \ell}{C_\Omega^2 C_\mathbb{C} C_\mathcal{T}^2} + 1\right)^{-1}, 1 - \sqrt{\left(\frac{1}{C_\alpha^2} \left[\frac{h^2}{\tau C_M C_\mathrm{inv}^2} + 1\right]\right)^{-1}}\right\}$, $L_\mathrm{ch} = 1 + \frac{\gamma}{\ell} L_{\Psi_\mathrm{c}'} \left(\frac{1}{\tau} \frac{h^2}{C_\mathrm{inv}^2} + \gamma \ell \frac{1}{C_\Omega^2} + c_\mathbb{C} c_\mathcal{T}^2\right)^{-1}$, and $L_\mathrm{b} = 2$.
\label{thm:conv_result_sd}
\end{theorem}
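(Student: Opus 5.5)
We follow the proof of Theorem~\ref{thm:conv_result}, verifying the hypotheses of Lemma~\ref{lem:aux_conv_result}. These are the norm relations \eqref{eq:rel_norm_ch}, \eqref{eq:rel_norm_b}, strong convexity \eqref{eq:H_conv}, and the block-wise Lipschitz bounds \eqref{eq:Lip_cont_ch}, \eqref{eq:Lip_cont_b}. All of them are now stated for $\tilde\calH_\tau^n$, with frozen coefficients $\mathbb{C}(\pf_h^{n-1})$ and $M(\pf_h^{n-1})$. The explicit term $(\tilde{\mathcal{E}}_e(\cdot),\pf_h)$ is linear in $\pf_h$, so it drops out of all differences of derivatives. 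The norms are defined as before, with $\mathbb{C}$ replaced by $\mathbb{C}(\pf_h^{n-1})$ and $M$ by $M(\pf_h^{n-1})$.

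The key difficulty is that $\tilde\energy_\mathrm{quad}$ is no longer quadratic jointly in $(\pf_h,\bm u_h,\theta_h)$, because of the factor $\alpha(\pf_h)$ in the hydraulic term. We handle this in two ways.

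\textbf{Cahn--Hilliard block.} For fixed $(\bm u_h,\theta_h)$, the hydraulic term is not quadratic in $\pf_h$. We still need it to be convex for the minimization to be well posed and for \eqref{eq:H_conv} to hold. We therefore treat the fluid term as a convex contribution in the joint variables. Its effect on $\delta_\pf$ is bounded only through the elastic and interfacial parts exactly as in Theorem~\ref{thm:conv_result}. This reproduces the same $L_\mathrm{ch}$, using the Lipschitz continuity of $\Psi_c'$, \eqref{eq:inv_eq_dual} and Poincar\'e--Wirtinger. It also reproduces the same $\beta_\mathrm{ch}$, using the Young splitting with $c_1,c_2,c_3$ and $\lambda=1$, now with the bounds $C_\mathbb{C}$ and $c_\mathbb{C}$ for $\mathbb{C}(\pf_h^{n-1})$.

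\textbf{Biot block.} For fixed $\pf_h$, the map $(\bm u_h,\theta_h)\mapsto\tilde\calH_\tau^n$ is quadratic, since $\alpha(\pf_h)$ is frozen. However, the norm $\|\cdot\|_\mathrm{b}$ is defined relative to the full norm, which now carries $\alpha(\pf_h)$ evaluated at varying arguments. To get a uniform statement, we define $\|(\bm u_h,\theta_h)\|_\mathrm{b}^2$ as
\begin{align*}
(\mathbb{C}\bm\varepsilon(\bm u_h),\bm\varepsilon(\bm u_h))+\tau^{-1}\|\theta_h\|^2_{\mathcal{V}_{h,\kappa}^{\mathrm{f},'}}+(M\theta_h,\theta_h),
\end{align*}
or with $\alpha$ frozen at a reference value. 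We then bound the derivative difference of the Biot block from above by $2\|\cdot\|_\mathrm{b}^2$. This uses $(a-b)^2\le 2a^2+2b^2$ on $(\theta_h-\alpha\Div\bm u_h)^2$ together with $C_\alpha\le1$, and it gives $L_\mathrm{b}=2$.

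\textbf{Lower bound giving $\beta_\mathrm{b}$.} The lower bound needs an extra estimate: the term $(M\theta_h,\theta_h)$ must be absorbed. For this we split $M(\theta_h-\alpha\Div\bm u_h)^2\ge(1-\lambda')M\theta_h^2-(\lambda'^{-1}-1)MC_\alpha^2|\Div\bm u_h|^2$, with $|\Div\bm u_h|^2\le|\bm\varepsilon(\bm u_h)|^2$ up to dimension constants absorbed in $C_\mathbb{C}$. We also use \eqref{eq:inv_eq_dual} to trade $\|\theta_h\|^2$ against $\frac{C_\mathrm{inv}^2}{h^2}\|\theta_h\|^2_{\mathcal{V}_{h,\kappa}^{\mathrm{f},'}}$. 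Optimizing $\lambda'$ leads to the square-root expression
\begin{align*}
1-\sqrt{\Big(\tfrac{1}{C_\alpha^2}\big[\tfrac{h^2}{\tau C_M C_\mathrm{inv}^2}+1\big]\Big)^{-1}}.
\end{align*}
Combining this with the elastic splitting from Theorem~\ref{thm:conv_result}, $\beta_\mathrm{b}$ is the minimum of the two constants.

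\textbf{Strong convexity and conclusion.} Strong convexity \eqref{eq:H_conv} with $\sigma=1$ follows from the convexity of $\Psi_c$, the dual-norm terms, and convexity of the elastic and hydraulic parts. This last point is where the argument is least certain. For nonlinear $\alpha$, the function $(\pf,\bm u,\theta)\mapsto(\theta-\alpha(\pf)\Div\bm u)^2$ is not convex in general. We would first try to exploit that only the monotonicity inequality along the alternating iterates is needed. Failing that, we would restrict to $\alpha$ affine in $\pf$, or rely on the convexity claimed in Lemma~\ref{lem:equivalence_sd}. Lemma~\ref{lem:aux_conv_result} then yields \eqref{eq:conv_result_sd}.

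We expect the main obstacle to be the coupling term $\alpha(\pf_h)\Div\bm u_h$. It appears both in the strong convexity step and in the derivation of the explicit second constant in $\beta_\mathrm{b}$, so these two steps need the most care.
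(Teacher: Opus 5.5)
Your overall plan is the paper's: check the hypotheses of Lemma~\ref{lem:aux_conv_result} with $\mathbb{C}$ and $M$ frozen at $\pf_h^{n-1}$, and take $\beta_\mathrm{ch}$, $L_\mathrm{ch}$ over from Theorem~\ref{thm:conv_result}. The Biot block, however, does not go through as you set it up, because your $\lVert\cdot\rVert_\mathrm{b}$ lacks the separate divergence component that the paper's norm contains:
\[
\lVert(\bm u_h,\theta_h)\rVert_\mathrm{b}^2=\big(\mathbb{C}(\pf_h^{n-1})\strain[u_h],\strain[u_h]\big)+\tfrac{1}{\tau}\lVert\theta_h\rVert_{\mathcal{V}_{h,\kappa}^{\mathrm{f},'}}^2+\big(M(\pf_h^{n-1})\theta_h,\theta_h\big)+\big(M(\pf_h^{n-1})\Div\bm u_h,\Div\bm u_h\big).
\]
With either of your choices, the bound $(M(\theta-\alpha\Div\bm u),\theta-\alpha\Div\bm u)\le 2(M\theta,\theta)+2C_\alpha^2(M\Div\bm u,\Div\bm u)$ leaves a divergence term outside $\lVert\cdot\rVert_\mathrm{b}$. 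Absorbing it into the elastic part costs a factor of order $d\,C_\alpha^2C_M/c_\mathbb{C}$, so $L_\mathrm{b}=2$ does not follow.

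Your lower bound has the same defect. Absorbing $-((\lambda')^{-1}-1)C_\alpha^2M|\Div\bm u_h|^2$ into $(1-\lambda)(\mathbb{C}\strain[u_h],\strain[u_h])$ is a lower-bound step, so it needs $c_\mathbb{C}$ (not $C_\mathbb{C}$) and the dimension. The resulting constant then depends on $c_\mathbb{C}$, $d$, $C_M$ and $\lambda$, and no choice of $\lambda'$ yields the stated square root, which contains none of these.

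The paper instead keeps $\Div\bm u_h$ as its own component. It applies Young with weight $\zeta$ to $2(M\theta_h,\alpha(\pf_h)\Div\bm u_h)$ and splits the resulting $\frac1\zeta(M\theta_h,\theta_h)$ via $\tilde c_1+\tilde c_2=1$. The $\tilde c_1$-part is bounded by $C_M\frac{C_\mathrm{inv}^2}{h^2}\lVert\theta_h\rVert^2_{\mathcal{V}_{h,\kappa}^{\mathrm{f},'}}$ through \eqref{eq:inv_eq_dual}. With $\zeta=(1-\tilde\beta_\mathrm{b})/C_\alpha^2$, $\tilde c_1=(1-\tilde\beta_\mathrm{b})\zeta\frac{h^2}{\tau C_MC_\mathrm{inv}^2}$ and $\tilde c_2=(1-\tilde\beta_\mathrm{b})\zeta$, the coefficients of $\frac1\tau\lVert\theta_h\rVert^2_{\mathcal{V}_{h,\kappa}^{\mathrm{f},'}}$, $(M\theta_h,\theta_h)$ and $(M\Div\bm u_h,\Div\bm u_h)$ all equal $\tilde\beta_\mathrm{b}$. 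The constraint $\tilde c_1+\tilde c_2=1$ then becomes exactly $(1-\tilde\beta_\mathrm{b})^2C_\alpha^{-2}\big[\frac{h^2}{\tau C_MC_\mathrm{inv}^2}+1\big]=1$.

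Your unease about this term is nonetheless justified. The paper's coefficient $1-\zeta C_\alpha^2$ in front of $(M\Div\bm u_h,\Div\bm u_h)$ tacitly uses $(M\alpha\Div\bm u_h,\alpha\Div\bm u_h)\ge(M\Div\bm u_h,\Div\bm u_h)$, i.e.\ $\alpha\ge 1$. Correct bookkeeping only gives $(1-\zeta)(M\alpha\Div\bm u_h,\alpha\Div\bm u_h)$.

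The convexity issue you flag at the end is a real gap, and your fallbacks do not close it. Under (A2) an affine $\alpha$ must be constant, which is the trivial case. For non-constant $\alpha$, the Hessian of $(\theta_h-\alpha(\pf_h)\Div\bm u_h)^2$ contains an indefinite part proportional to $\theta_h-\alpha(\pf_h)\Div\bm u_h$, which the fixed convex quadratic terms cannot dominate uniformly.

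The same term also invalidates your Cahn--Hilliard step. Its contribution $-M(\pf_h^{n-1})\alpha'(\pf_h)(\theta_h-\alpha(\pf_h)\Div\bm u_h)\Div\bm u_h$ to $\delta_\pf\tilde\calH_\tau^n$, which you discard, enters the left-hand side of \eqref{eq:Lip_cont_ch}. It is not bounded by a multiple of $\lVert\pf_h^1-\pf_h^2\rVert_\mathrm{ch}^2$ uniformly in $(\bm u_h,\theta_h)$.

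The paper does not resolve this either: it states that \eqref{eq:H_conv} and \eqref{eq:Lip_cont_ch} follow ``similarly as in the proof of Theorem~\ref{thm:conv_result}'', i.e.\ it rests on the convexity asserted in Lemma~\ref{lem:equivalence_sd}. Relying on that lemma therefore reproduces the paper's argument, but it is an unproved assumption, not a step you have established.
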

\begin{proof}
The proof is similar to the one of Theorem~\ref{thm:conv_result}. We define the (semi-)norms
\begin{align*}
 \lVert (\pf_h, \bm u_h, \theta_h) \rVert^2 &:= \frac{\lVert \pf_h \rVert_{\mathcal{V}_{h,m}^{\mathrm{ch},'}}^2}{\tau} + \gamma \ell \lVert \nabla\pf_h \rVert_{\vecSpL}^2 + \left(\mathbb{C}(\pf_h^{n-1})\left(\bm\varepsilon\!\left(\bm u_h\right)-\mathcal{T}\!\left(\pf_h\right)\right),\bm\varepsilon\!\left(\bm u_h\right)-\mathcal{T}\!\left(\pf_h\right)\right) \\ 
 &\qquad+ \frac{\lVert \theta_h \rVert_{\mathcal{V}_{h,\kappa}^{\mathrm{f},'}}^2}{\tau} + \left(M(\pf_h^{n-1}) \left(\theta_h - \alpha(\pf_h) \Div \bm u_h\right), \theta_h - \alpha(\pf_h) \Div \bm u_h\right), \\
 \lVert \pf_h \rVert_\mathrm{ch}^2 &:= \frac{\lVert \pf_h \rVert_{\mathcal{V}_{h,m}^{\mathrm{ch},'}}^2}{\tau} + \gamma \ell \lVert \nabla\pf_h \rVert_{\vecSpL}^2 + \left(\mathbb{C}(\pf_h^{n-1})\mathcal{T}\!\left(\pf_h\right), \mathcal{T}\!\left(\pf_h\right)\right), \\
 \lVert (\bm u_h, \theta_h) \rVert_\mathrm{b}^2 &:= \left(\mathbb{C}(\pf_h^{n-1})\bm\varepsilon\!\left(\bm u_h\right), \bm\varepsilon\!\left(\bm u_h\right)\right) + \frac{\lVert \theta_h \rVert_{\mathcal{V}_{h,\kappa}^{\mathrm{f},'}}^2}{\tau} + \left(M(\pf_h^{n-1}) \theta_h, \theta_h\right) + \left(M(\pf_h^{n-1}) \Div \bm u_h, \Div \bm u_h\right)
\end{align*}
for $\pf_h \in \mathcal{V}_{h,0}^\mathrm{ch}$, $\bm u_h \in \mathcal{V}_h^{\bm u}$ and $\theta_h \in \mathcal{V}_{h,0}^{\mathrm{f}}$, and accordingly arrive at the bound
\begin{align*}
 \lVert (\pf_h, \bm u_h, \theta_h) \rVert^2 &\geq
 \left(\frac{1}{\tau} - \frac{1}{\lambda} c_1 \frac{C_\mathrm{inv}^2}{h^2} C_\mathbb{C} C_\mathcal{T}^2\right) \lVert \pf_h \rVert_{\mathcal{V}_{h,m}^{\mathrm{ch},'}}^2 + \left(\gamma \ell - \frac{1}{\lambda} c_2 C_\Omega^2 C_\mathbb{C} C_\mathcal{T}^2\right) \lVert \nabla \pf_h \rVert_{\vecSpL} \\
 &\quad + \left(1 - \frac{c_3}{\lambda}\right) \left(\mathbb{C}(\pf_h^{n-1})\mathcal{T}\!\left(\pf_h\right), \mathcal{T}\!\left(\pf_h\right)\right) + \left(1 - \lambda\right) \left(\mathbb{C}(\pf_h^{n-1})\bm\varepsilon\!\left(\bm u_h\right), \bm\varepsilon\!\left(\bm u_h\right)\right) \\
 &\qquad + \frac{\lVert \theta_h \rVert_{\mathcal{V}_{h,\kappa}^{\mathrm{f},'}}^2}{\tau} + \left(M(\pf_h^{n-1}) \left(\theta_h - \alpha(\pf_h) \Div \bm u_h\right), \theta_h - \alpha(\pf_h) \Div \bm u_h\right).
\end{align*}
The relation \eqref{eq:rel_norm_ch} then follows with the same constant as in Theorem~\ref{thm:conv_result}. \\
We further have
\begin{align*}
 &\left(M(\pf_h^{n-1}) \left(\theta_h - \alpha(\pf_h) \Div \bm u_h\right), \theta_h - \alpha(\pf_h) \Div \bm u_h\right) \\
 &\qquad= \left(M(\pf_h^{n-1}) \theta_h, \theta_h\right) - 2 \left(M(\pf_h^{n-1}) \theta_h, \alpha(\pf_h) \Div \bm u_h\right) + \left(M(\pf_h^{n-1}) \alpha(\pf_h) \Div \bm u_h, \alpha(\pf_h) \Div \bm u_h\right)
\end{align*}
and by Young's inequality, the boundedness of $\alpha(\cdot)$ and \eqref{eq:inv_eq_dual} we get
\begin{align*}
 2 \left(M(\pf_h^{n-1}) \theta_h, \alpha(\pf_h) \Div \bm u_h\right) &\leq \frac{1}{\zeta} (\tilde{c}_1 + \tilde{c}_2) \left(M(\pf_h^{n-1}) \theta_h, \theta_h\right) + \zeta \left(M(\pf_h^{n-1}) \alpha(\pf_h) \Div \bm u_h, \alpha(\pf_h) \Div \bm u_h\right) \\
 &\leq \frac{1}{\zeta} \tilde{c}_1 C_M \|\theta_h\|_{L^2{\Omega}}^2 + \frac{1}{\zeta} \tilde{c}_2 \left(M(\pf_h^{n-1}) \theta_h, \theta_h\right) + \zeta C_\alpha^2 \left(M(\pf_h^{n-1}) \Div \bm u_h, \Div \bm u_h\right) \\
 &\leq \frac{1}{\zeta} \tilde{c}_1 C_M \frac{C_\mathrm{inv}^2}{h^2} \lVert \theta_h \rVert_{\mathcal{V}_{h,\kappa}^{\mathrm{f},'}}^2 + \frac{1}{\zeta} \tilde{c}_2 \left(M(\pf_h^{n-1}) \theta_h, \theta_h\right) + \zeta C_\alpha^2 \left(M(\pf_h^{n-1}) \Div \bm u_h, \Div \bm u_h\right)
\end{align*}
with coefficients $0 \leq \tilde{c}_k \leq 1$ for $k \in \{1,2\}$, $\tilde{c}_1 + \tilde{c}_2= 1$, and $\zeta > 0$ to be chosen later, yielding
\begin{align*}
 \lVert (\pf_h, \bm u_h, \theta_h) \rVert^2 &\geq
 \left(\frac{1}{\tau} - \frac{1}{\lambda} c_1 \frac{C_\mathrm{inv}^2}{h^2} C_\mathbb{C} C_\mathcal{T}^2\right) \lVert \pf_h \rVert_{\mathcal{V}_{h,m}^{\mathrm{ch},'}}^2 + \left(\gamma \ell - \frac{1}{\lambda} c_2 C_\Omega^2 C_\mathbb{C} C_\mathcal{T}^2\right) \lVert \nabla \pf_h \rVert_{\vecSpL} \\
 &\quad + \left(1 - \frac{c_3}{\lambda}\right) \left(\mathbb{C}(\pf_h^{n-1})\mathcal{T}\!\left(\pf_h\right), \mathcal{T}\!\left(\pf_h\right)\right) + \left(1 - \lambda\right) \left(\mathbb{C}(\pf_h^{n-1})\bm\varepsilon\!\left(\bm u_h\right), \bm\varepsilon\!\left(\bm u_h\right)\right) \\
 &\qquad + \left(\frac{1}{\tau} - \frac{1}{\zeta} \tilde{c}_1 C_M \frac{C_\mathrm{inv}^2}{h^2}\right) \lVert \theta_h \rVert_{\mathcal{V}_{h,\kappa}^{\mathrm{f},'}}^2 \\
 &\qquad \quad+ \left(1 - \frac{\tilde{c}_2}{\zeta}\right) \left(M(\pf_h^{n-1}) \theta_h, \theta_h\right) + \left(1 - \zeta C_\alpha^2\right) \left(M(\pf_h^{n-1}) \Div \bm u_h, \Div \bm u_h\right).
\end{align*}
Choosing
\begin{align*}
 c_1 = \lambda \frac{h^2}{\tau C_\mathrm{inv}^2 C_\mathbb{C} C_\mathcal{T}^2}, \quad c_2 = \lambda \frac{\gamma \ell}{C_\Omega^2 C_\mathbb{C} C_\mathcal{T}^2}, \quad c_3 = \lambda, \quad \lambda = \left(\frac{h^2}{\tau C_\mathrm{inv}^2 C_\mathbb{C} C_\mathcal{T}^2} + \frac{\gamma \ell}{C_\Omega^2 C_\mathbb{C} C_\mathcal{T}^2} + 1\right)^{-1} 
\end{align*}
as in the proof of Theorem~\ref{thm:conv_result}, we obtain
\begin{align*}
 \lVert (\pf_h, \bm u_h, \theta_h) \rVert^2 &\geq \left(1 - \lambda\right) \left(\mathbb{C}(\pf_h^{n-1})\bm\varepsilon\!\left(\bm u_h\right), \bm\varepsilon\!\left(\bm u_h\right)\right) + \left(\frac{1}{\tau} - \frac{1}{\zeta} \tilde{c}_1 C_M \frac{C_\mathrm{inv}^2}{h^2}\right) \lVert \theta_h \rVert_{\mathcal{V}_{h,\kappa}^{\mathrm{f},'}}^2 \\
 &\qquad \quad+ \left(1 - \frac{\tilde{c}_2}{\zeta}\right) \left(M(\pf_h^{n-1}) \theta_h, \theta_h\right) + \left(1 - \zeta C_\alpha^2\right) \left(M(\pf_h^{n-1}) \Div \bm u_h, \Div \bm u_h\right).
\end{align*}
By introducing
\begin{align*}
 \tilde{c}_1 = (1 - \tilde \beta_\mathrm{b}) \zeta \frac{h^2}{\tau C_M C_\mathrm{inv}^2}, \quad \tilde{c}_2 = (1 - \tilde \beta_\mathrm{b}) \zeta
\end{align*}
and
\begin{align*}
 \zeta = \frac{1}{C_\alpha^2} (1 - \tilde \beta_\mathrm{b}) > 0 \quad \Rightarrow \quad \tilde \beta_\mathrm{b} < 1
\end{align*}
together with the relation $\tilde c_1 + \tilde c_2 = 1$, we derive
\begin{align*}
 (1 - \tilde \beta_\mathrm{b})^2 \frac{1}{C_\alpha^2} \left[\frac{h^2}{\tau C_M C_\mathrm{inv}^2} + 1\right] = 1 \quad \Leftrightarrow \quad \tilde \beta_\mathrm{b} = 1 - \sqrt{\left(\frac{1}{C_\alpha^2} \left[\frac{h^2}{\tau C_M C_\mathrm{inv}^2} + 1\right]\right)^{-1}}
\end{align*}
with $\tilde \beta_\mathrm{b} > 0$ as $C_\alpha \leq 1$, and hence
\begin{align*}
 \lVert (\pf_h, \bm u_h, \theta_h) \rVert^2 &\geq \left(1 - \lambda\right) \left(\mathbb{C}(\pf_h^{n-1})\bm\varepsilon\!\left(\bm u_h\right), \bm\varepsilon\!\left(\bm u_h\right)\right) + \tilde \beta_\mathrm{b} \left(\frac{1}{\tau} \lVert \theta_h \rVert_{\mathcal{V}_{h,\kappa}^{\mathrm{f},'}}^2 + \left(M(\pf_h^{n-1}) \theta_h, \theta_h\right) +  \left(M(\pf_h^{n-1}) \Div \bm u_h, \Div \bm u_h\right)\right) \\
 &\geq \beta_\mathrm{b} \lVert (\bm u_h, \theta_h) \rVert_\mathrm{b}^2
\end{align*}
with constant $\beta_\mathrm{b} = \min\{1 - \lambda, \tilde \beta_\mathrm{b}\}$, i.e., \eqref{eq:rel_norm_b}.

The potential related bounds \eqref{eq:H_conv} and \eqref{eq:Lip_cont_ch} follow similarly as in the proof of Theorem~\ref{thm:conv_result} with corresponding constants. \\
Lastly, we employ the definition of $\tilde\energy_\mathrm{quad}$ along with the boundedness of $\alpha(\cdot)$ and Young's inequality to obtain
\begin{align*}
 &\Big(\delta_{(\bm u,\theta)}\tilde\calH\!\left(\pf_h, \bm u_h^1, \theta_h^1\right) - \delta_{(\bm u,\theta)}\tilde\calH\!\left(\pf_h, \bm u_h^2, \theta_h^2\right), \left(\bm u_h^1 - \bm u_h^2, \theta_h^1 - \theta_h^2\right)\Big) \\
 &= \Big(\delta_{(\strain,\theta)} \tilde\energy_\mathrm{quad}(\pf_h^1, \strain[u_h^1], \theta_h^1; \pf_h^{n-1}) - \delta_{(\strain,\theta)} \tilde\energy_\mathrm{quad}(\pf_h^2, \strain[u_h^2], \theta_h^2; \pf_h^{n-1}), \left(\strain[u_h^1 - \bm u_h^2], \theta_h^1 - \theta_h^2\right)\Big) + \left(\frac{\theta_h^1 - \theta_h^2}{\tau}, \theta_h^1 - \theta_h^2\right)_{\mathcal{V}_{h,\kappa}^{\mathrm{f},'}} \\
 &= 2 \tilde\energy_\mathrm{quad}(0, \strain[u_h^1 - \bm u_h^2], \theta_h^1 - \theta_h^2; \pf_h^{n-1}) + \frac{\lVert \theta_h^1 - \theta_h^2 \rVert_{\mathcal{V}_{h,\kappa}^{\mathrm{f},'}}^2}{\tau} \\
 &= \left(\mathbb{C}(\pf_h^{n-1})\bm\varepsilon\!\left(\bm u_h^1 - \bm u_h^2\right), \bm\varepsilon\!\left(\bm u_h^1 - \bm u_h^2\right)\right) + \frac{\lVert \theta_h^1 - \theta_h^2 \rVert_{\mathcal{V}_{h,\kappa}^{\mathrm{f},'}}^2}{\tau} \\
 &\qquad+ \left(M(\pf_h^{n-1}) \left([\theta_h^1 - \theta_h^2] - \alpha(0) \Div [\bm u_h^1 - \bm u_h^2]\right), [\theta_h^1-\theta_h^2] - \alpha(0) \Div [\bm u_h^1 - \bm u_h^2]\right) \\
 &\leq \left(\mathbb{C}(\pf_h^{n-1})\bm\varepsilon\!\left(\bm u_h^1 - \bm u_h^2\right), \bm\varepsilon\!\left(\bm u_h^1 - \bm u_h^2\right)\right) + \frac{\lVert \theta_h^1 - \theta_h^2 \rVert_{\mathcal{V}_{h,\kappa}^{\mathrm{f},'}}^2}{\tau} \\
 &\qquad+ 2 \left(M(\pf_h^{n-1}) (\theta_h^1 - \theta_h^2), \theta_h^1-\theta_h^2\right) + 2 C_\alpha^2 \left(M(\pf_h^{n-1}) \Div (\bm u_h^1 - \bm u_h^2), \Div (\bm u_h^1 - \bm u_h^2)\right) \\
 &\leq L_\mathrm{b} \lVert (\bm u_h^1 - \bm u_h^2, \theta_h^1 - \theta_h^2) \rVert_\mathrm{b}^2
\end{align*}
with constant $L_\mathrm{b} = \max\{2, 2C_\alpha^2\} = 2$ for all $\pf_h \in \mathcal{\bar V}_h^{\mathrm{ch},n}$, $\bm u_h^1, \bm u_h^2 \in \mathcal{V}_h^{\bm u}$, and $\theta_h^1, \theta_h^2 \in \mathcal{\bar V}_h^{\mathrm{f},n}$, and thus the remaining bound \eqref{eq:Lip_cont_b}.

The desired convergence result \eqref{eq:conv_result_sd} then follows from Lemma~\ref{lem:aux_conv_result}.
\end{proof}

\section{Numerical experiments} \label{sec:experiments}

The numerical experiments are tailored to assess the robustness and efficacy of the proposed semi-implicit time discretization \eqref{eq:model_disc_sd} and the subsequently introduced alternating minimization method \eqref{eq:model_disc_alter_sd}. 
To do so, we employ a comparative study for three different solution schemes; (fully) monolithic \eqref{eq:model_disc_sd}, alternating minimization between the Cahn-Hilliard and the monolithic solution of the Biot subsystem \eqref{eq:model_disc_alter_sd}, and a three way split, where the Cahn-Hilliard \eqref{eq:ch1split_sd}-\eqref{eq:ch2split_sd}, the elasticity \eqref{eq:elasticitysplit_sd} and the flow \eqref{eq:flowsplit_sd}-\eqref{eq:pressuresplit_sd} subsystems are all solved sequentially. We then apply these based on a ''fully'' implicit time discretization and the proposed semi-implicit one, leading to a total of six different solution strategies. We note that the ''fully'' implicit time discretization is in fact not fully implicit as the classical Eyre split \cite{Eyre1998} is used to split the double-well potential.

For the spatial discretization, we triangulate the domain $\Omega$ and employ first order conforming finite elements for all variables $\pf$, $\mu$, $\theta$, $p$, and $\bm u$, the latter naturally being vectorial.

We introduce the interpolation function
\begin{align}
 \pi(\pf) =
 \begin{cases}
  0 &\textrm{for } \pf < -1, \\
  \frac{1}{4} \left(-\pf^3 + 3\pf + 2\right) &\textrm{for } \pf \in \left[-1,1\right], \\
  1 &\textrm{for } \pf > 1,
 \end{cases}
\label{eq:interpolation_function}
\end{align}
in order to write the nonlinear state-dependent material parameters as
\begin{align*}
 \zeta(\pf) = \zeta_{-1} + \pi(\pf)\!\left(\zeta_1-\zeta_{-1}\right)
\end{align*}
with $\zeta \in \left\{M, \alpha, \mathbb{C}\right\}$.
For the eigenstrain we choose $\mathcal{T}(\pf) = \xi \pf \bm I$ \cite{Areias2016} accounting for swelling effects with constant swelling parameter $\xi > 0$.
The specific material, discretization and linearization parameters are presented in Table~\ref{tab:simulation_parameters}, with the stiffness tensors given in Voigt notation as 
\begin{align}
 \mathbb{C}_{-1} = 
 \begin{pmatrix}
  100 & 20  & 0\\
  20  & 100 & 0\\
  0   & 0   & 100
 \end{pmatrix}, \quad \mathrm{and}\quad 
 \mathbb{C}_{1} = 
 \begin{pmatrix}
  1   & 0.1 & 0\\
  0.1 & 1   & 0\\
  0   & 0   & 1
 \end{pmatrix}.
\label{eq:stiffness_tensors}
\end{align}

We use an absolute incremental stopping criterion of the form 
\begin{equation*}
    \left\|\pf_h^{n,i}-\pf_h^{n,i-1}\right\|_{L^2(\Omega)}^2+\left\|\bm u_h^{n,i}-\bm u_h^{n,i-1}\right\|_{L^2(\Omega)}^2 +\left\|p_h^{n,i}-p_h^{n,i-1}\right\|_{L^2(\Omega)}^2<\mathrm{tol}.
\end{equation*}

\begin{table}[h]
\centering
\begin{tabular}{l|c|c|c}
Parameter name                    & Symbol                         & Value         &  Unit \\
\hline
Surface tension                   & $\gamma$                       & $1$           & $\left[\frac{F}{L}\right]$\\
Regularization parameter          & $\ell$                         & $0.025$       & $[L]$ \\
Mobility                          & $m$                            & $1.0$         & $\left[\frac{L^4}{FT}\right]$  \\
Permeability                      & $\kappa$                       & $0.25$         & $\left[\frac{L^4}{FT}\right]$ \\
Swelling parameter                & $\xi$                          & $0.5$         & --\\
Stiffness tensors                 & 
$\mathbb{C}_{-1}$, $\mathbb{C}_1$ & -             & $\left[\frac{F}{L^2}\right]$\\
Compressibilities                 & 
$M_{-1}$, $M_1$                   & $1.0$, $0.1$  & $\left[\frac{F}{L^2}\right]$\\
Biot-Willis coupling coefficients & 
$\alpha_{-1}$, $\alpha_1$         & $1.0$, $0.1$  & -- \\
Time-step size                    & $\tau$                         & $1$e$-3$      & $[T]$\\
Mesh diameter                     & $h$                            & $\sqrt{2}/64$ & $[L]$\\
Max nonlinear iteration           & max\textunderscore iter        & $100$         & -- \\
Tolerance                         & tol                            & $1$e$-6$      & -- \\
\end{tabular}
\caption{Default simulation parameters. The stiffness tensors are given in \eqref{eq:stiffness_tensors} and the interpolation function in \eqref{eq:interpolation_function}. Here, $L$, $T$ and $F$ denote the units of length, time and force respectively.}
\label{tab:simulation_parameters}
\end{table}

For the modified double-well potential and the related convex-concave split, cf.~Remark~\ref{rem:double-well}, we choose $\beta = \frac{3}{2}$.

Note that the Cahn-Hilliard subsystem for the alternating minimization scheme \eqref{eq:model_disc_alter_sd} is still nonlinear due to the potential term $\Psi_c'\!\left(\pf_h^{n,i}\right)$. Here, we employ Newton's method to solve it, which is known to converge in this setting \cite{GuillenGonzalez2014}.

For the implementation we used FEniCSx \cite{Baratta2023, Scroggs2022, Basix2022, Alnaes2014} version 0.9, the code is publicly available \cite{Storvik2026}, and all computations are carried out on a MacBook Pro with the Apple M3 chip.

\subsection{Model problem}
The initial phase field takes value $-1$ in the left half of the domain and value $1$ in the right half. The other fields are initialized with value zero in the entire domain. We apply homogeneous Neumann boundary conditions to the Cahn-Hilliard and flow subproblems, and homogeneous Dirichlet conditions to the elasticity subproblem. The source terms $R$, $S_f$ and $\bm f$ are all set to zero. In Figure~\ref{fig:solution_model_problem} we show simulation results for the evolution of the phase field $\pf$ given this initial setup and the parameters from Table~\ref{tab:simulation_parameters}.

We note that only the outermost iterations are counted in the presentation, i.e.:
\begin{itemize}
    \item For the monolithic solvers we count the number of Newton iterations.
    \item For the split between the Cahn-Hilliard and the Biot subsystem we count the number of coupling iterations, i.e., solving each of the subsystems once counts as one iteration in total. The Newton iterations used to solve the Cahn-Hilliard subsystem are not counted.
    \item For the three-way, fixed-stress type split solving each of the three subsystems once is counted as one iteration in total. Again, the Newton iterations used to solve the Cahn-Hilliard system are not counted.  
\end{itemize}

\begin{figure}
    \centering
    \begin{subfigure}{0.24\linewidth}
            \includegraphics[width=\linewidth]{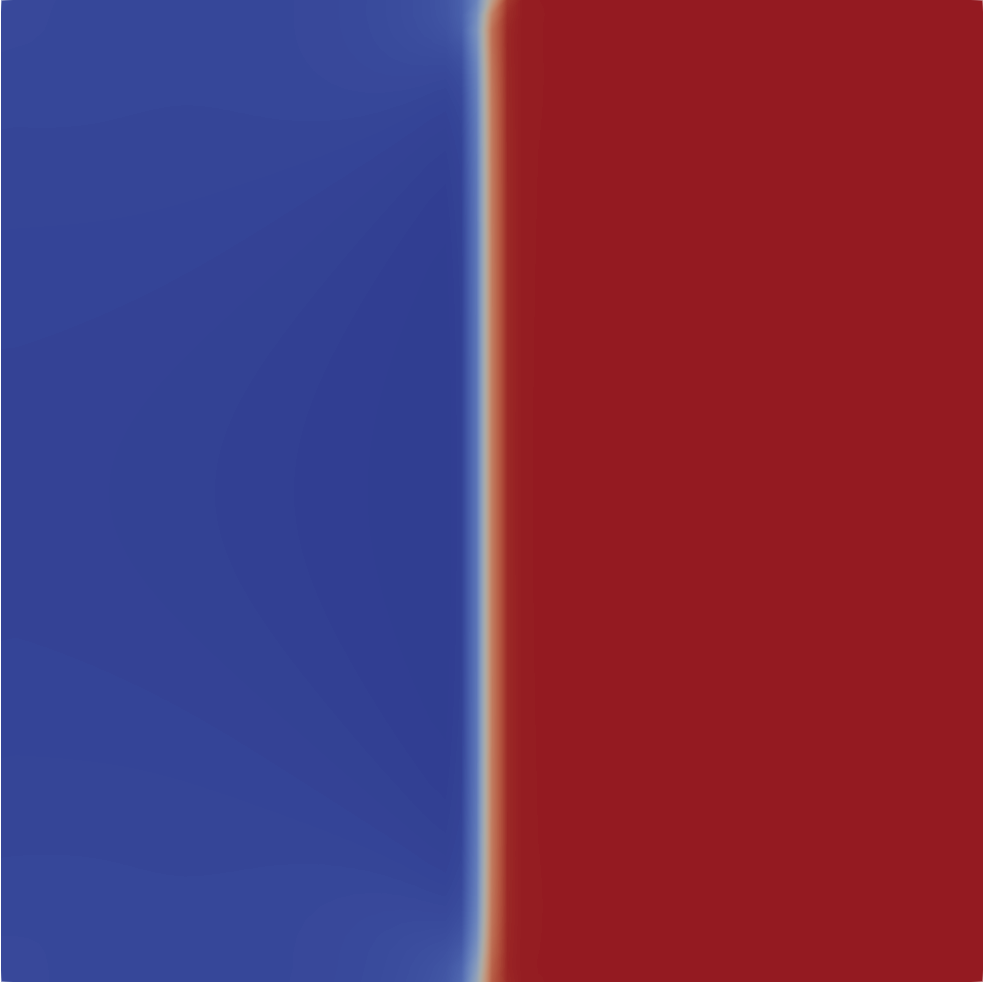}
            \caption{$t=0$}
    \end{subfigure}
    \begin{subfigure}{0.24\linewidth}
            \includegraphics[width=\linewidth]{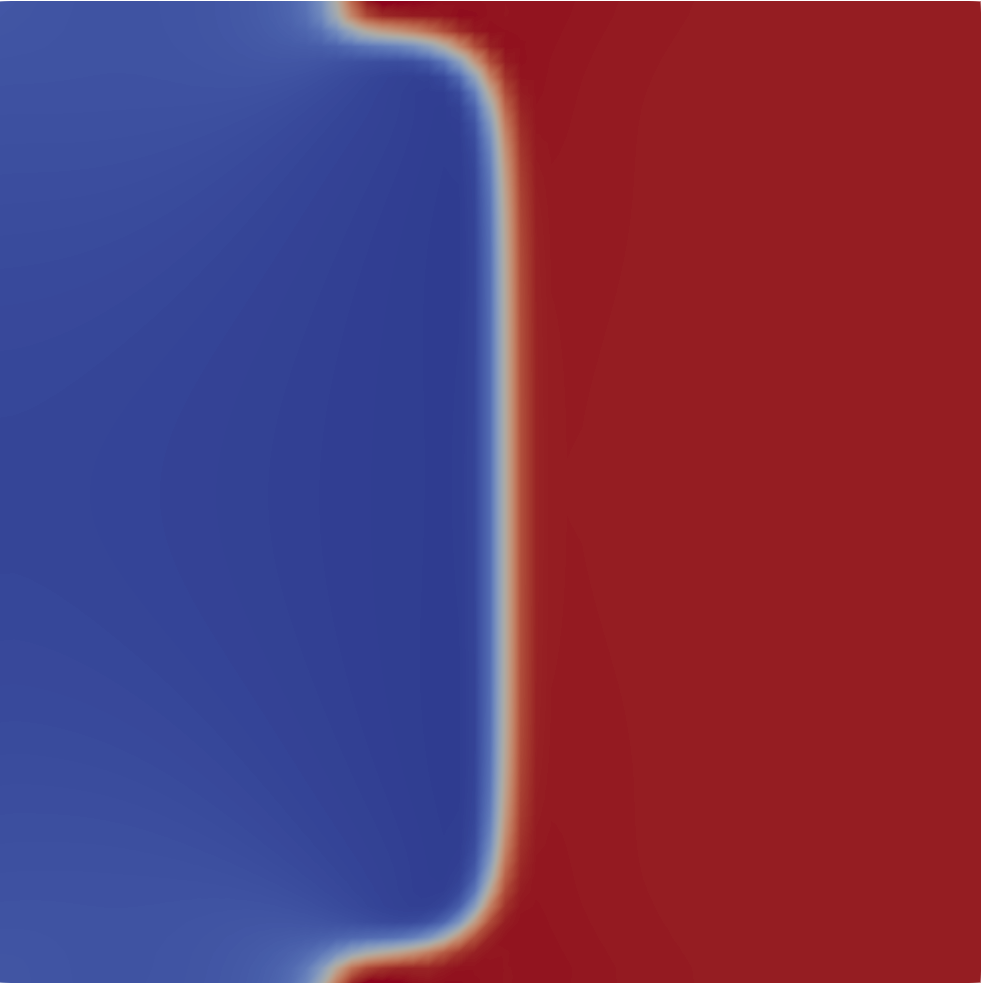}
            \caption{$t=0.01$s}
    \end{subfigure}
    \begin{subfigure}{0.24\linewidth}
            \includegraphics[width=\linewidth]{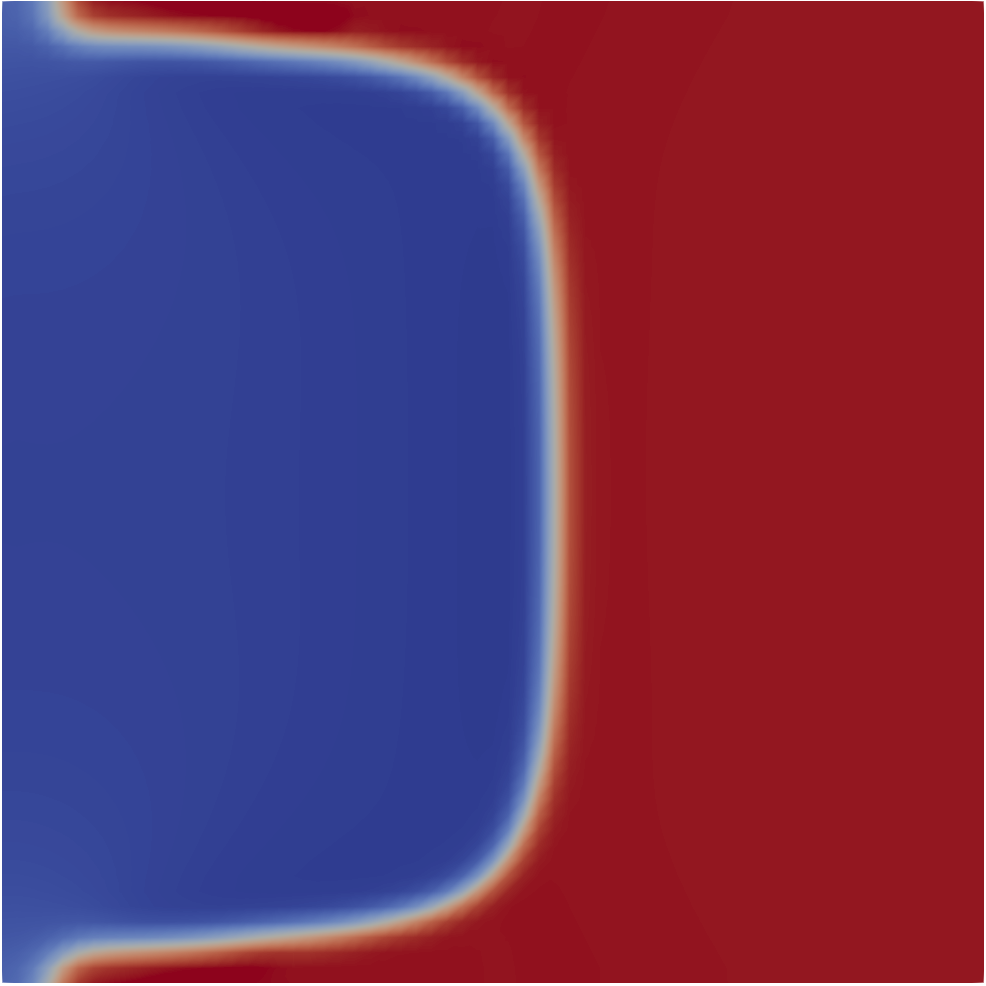}
            \caption{$t=0.05$s}
    \end{subfigure}
    \begin{subfigure}{0.24\linewidth}
            \includegraphics[width=\linewidth]{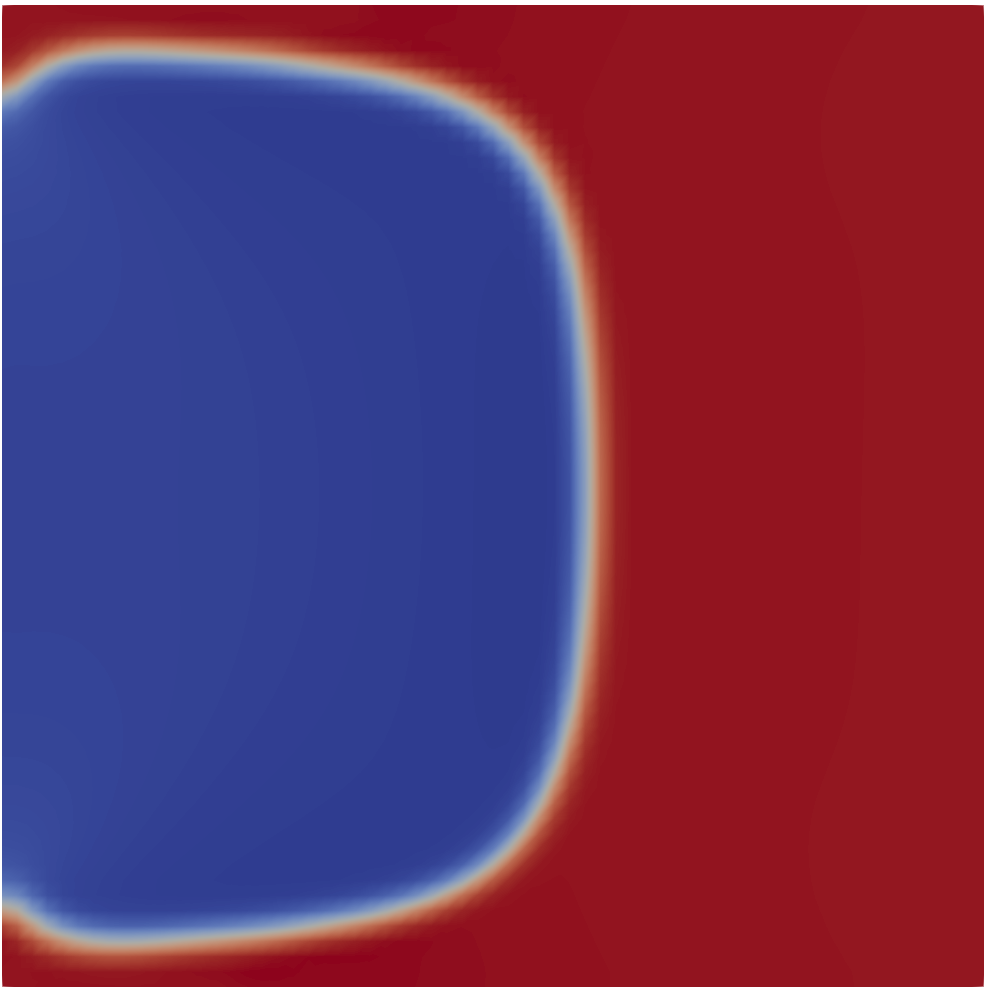}
            \caption{$t=0.1$s}
    \end{subfigure}
    \begin{subfigure}{\linewidth}
                \includegraphics[width=\linewidth]{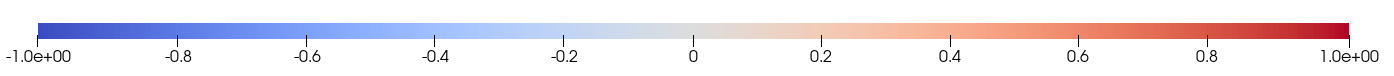}
    \end{subfigure}
    \caption{Evolution of the phase field $\varphi$ for the model problem. Presented above are four snap-shots at different times.}
    \label{fig:solution_model_problem}
\end{figure}

\subsubsection{Dependence on surface tension $\gamma$}
In Figure~\ref{fig:gamma} we present the results of a study with varying values of the surface tension parameter $\gamma$. The remaining material and simulation parameters are found in Table~\ref{tab:simulation_parameters}, specifically $\xi = 0.5$. 
The study has several interesting implications. Firstly, increasing the value of $\gamma$ results in relatively simpler simulations in terms of the convergence speed (fewer iterations and lower computational time), which is reasonable as it effectively reduces the coupling strength between the Cahn-Hilliard and Biot subsystems. Secondly, the semi-implicit time discretization is more suited for linearization methods, at least the decoupling ones, than the implicit discretization. This is also the point of the temporal discretization as the discrete nonlinear systems are designed to be convex in this case. Thirdly, we see that although the numbers of iterations are far lower for the monolithic solvers, the decoupling solvers are more competitive in terms of computational time. We expect this to be due to the ease of solving each of the nonlinear and linear systems related to the separate subproblems, rather than the full one. Note that we exclusively used the default (non-)linear solver configurations from PETSc \cite{petsc, Dalcin2011} through DOLFINx \cite{Baratta2023} and expect that more efficient solver setups along with tailor-made preconditioners could be established for the monolithic problem, as well as for each subproblem. 
\pgfplotstableread{
0 0
1 0
80 0
}\datatableEmpty

\pgfplotstableread{
0.25	342
0.5	310
1	264
2	238
4	205
}\datatableGammaIterMonoSemi

\pgfplotstableread{
0.25 253
0.5	204
1	203
2	279
4	281
}\datatableGammaIterMono

\pgfplotstableread{
0.25 1033
0.5	746
1	571
2	468
4	401
}\datatableGammaIterSplitSemi

\pgfplotstableread{
0.25	1505
0.5	1275
1	994
2	926
4	687
}\datatableGammaIterSplit

\pgfplotstableread{
0.25 1026
0.5	740
1	564
2	461
4	401
}\datatableGammaIterFSSemi

\pgfplotstableread{
0.25 1471
0.5	1263
1	985
2	917
4	677
}\datatableGammaIterFS

\begin{figure}
\centering
\hspace{-0.78cm}
\begin{subfigure}{0.9\linewidth}
  \begin{tikzpicture}
   \pgfplotsset{ybar stacked, ymin=200.000, ymax=1500.000, width=\textwidth, enlargelimits=0.1,
   grid=both, grid style={line width=.2pt, draw=gray!20}, 
   xmin=0, xmax = 4,
 }
 
 \begin{axis}
 [
 ylabel={Total \# iterations}, 
 xticklabels={0.25, 0.5, 1, 2, 4},
 xlabel = {Surface tension - $\gamma$},
 ytick={200, 600, 1000, 1400}, 
 height=5cm,
 xtick=data,
 bar width=8pt,
 bar shift=-25pt, 
 height=5cm,
 scaled y ticks = false,
 ]  
 
 \pgfplotsinvokeforeach {1}{
     \addplot[color=neworange1, fill=neworange1] table [x expr=\coordindex, y index=#1] {\datatableGammaIterMonoSemi};}
     
\end{axis} 

 \begin{axis}
 [
 bar width=8pt,
 bar shift=-15pt, 
 xtick=\empty,
 ytick=\empty, 
 height=5cm,
 ]  
 
 \pgfplotsinvokeforeach {1}{
     \addplot[color=newblue1, fill=newblue1] table [x expr=\coordindex, y index=#1] {\datatableGammaIterMono};}
\end{axis} 

 \begin{axis}
 [
 bar width=8pt,
 bar shift=-5pt, 
 xtick=\empty,
 ytick=\empty, 
 height=5cm,
 ]  
 
 \pgfplotsinvokeforeach {1}{
     \addplot[color=newgreen1, fill=newgreen1] table [x expr=\coordindex, y index=#1] {\datatableGammaIterSplitSemi};}
\end{axis} 

 \begin{axis}
 [
 bar width=8pt,
 bar shift=5pt, 
 xtick=\empty,
 ytick=\empty, 
 height=5cm,
 ]  
 
 \pgfplotsinvokeforeach {1}{
     \addplot[color=newpurple1, fill=newpurple1] table [x expr=\coordindex, y index=#1] {\datatableGammaIterSplit};}
\end{axis} 

 \begin{axis}
 [
 bar width=8pt,
 bar shift=15pt, 
 xtick=\empty,
 ytick=\empty, 
 height=5cm,
 ]  
 
 \pgfplotsinvokeforeach {1}{
     \addplot[color=newred1, fill=newred1] table [x expr=\coordindex, y index=#1] {\datatableGammaIterFSSemi};}
     
\end{axis} 

 \begin{axis}
 [
 bar width=8pt,
 bar shift=25pt, 
 xtick=\empty,
 ytick=\empty, 
 height=5cm,
 ]  
 
 \pgfplotsinvokeforeach {1}{
     \addplot[color=neworange2, fill=neworange2] table [x expr=\coordindex, y index=#1] {\datatableGammaIterFS};}
     
\end{axis} 

 \begin{axis}
 [
 bar width=0pt,
 bar shift=10pt, 
 legend columns=2,
 legend style={at={(0.8,0.99)}, anchor=north}, 
 xtick=\empty,
 legend cell align=left,
 legend style={
     /tikz/column 2/.style={
         column sep=-1.3pt,
         row sep=-20pt 
     },
     /tikz/column 6/.style={ 
         column sep=-1.3pt,
         row sep=-20pt 
     }},
 ytick=\empty,
 height=5cm,
 ]  
 \pgfplotsinvokeforeach {1}{
     \addplot[color=neworange1, fill=neworange1] table [x expr=\coordindex, y index=#1] {\datatableEmpty};}
 \pgfplotsinvokeforeach {1}{
     \addplot[color=newblue1, fill=newblue1] table [x expr=\coordindex, y index=#1] {\datatableEmpty};}
 \pgfplotsinvokeforeach {1}{
     \addplot[color=newgreen1, fill=newgreen1] table [x expr=\coordindex, y index=#1] {\datatableEmpty};}
 \pgfplotsinvokeforeach {1}{
     \addplot[color=newpurple1, fill=newpurple1] table [x expr=\coordindex, y index=#1] {\datatableEmpty};}
 \pgfplotsinvokeforeach {1}{
     \addplot[color=newred1, fill=newred1] table [x expr=\coordindex, y index=#1] {\datatableEmpty};}
 \pgfplotsinvokeforeach {1}{
     \addplot[color=neworange2, fill=neworange2] table [x expr=\coordindex, y index=#1] {\datatableEmpty};}
     
 \legend{
 Semi-impl. Mono, 
 Mono, 
 Semi-impl. Split,
 Split,
 Semi-impl. FS,
 FS,
 }
 \end{axis}

 \end{tikzpicture}
 \caption{Total number of iterations for simulations with different surface tension parameters.}
 \end{subfigure}
 \begin{subfigure}{0.9\linewidth}
     \pgfplotstableread{
0 0
1 0
80 0
}\datatableEmpty

\pgfplotstableread{
0.25	140.6552622
0.5	125.0226722
1	106.5341723
2	93.66514444
4	80.19540572
}\datatableGammaTimeMonoSemi

\pgfplotstableread{
0.25	101.8442953
0.5		82.05285931
1		90.22534823
2		110.4022477
4		108.905005
}\datatableGammaTimeMono

\pgfplotstableread{
0.25	105.1253779
0.5		73.37491894
1		54.56369686
2		43.3175683
4	35.78557754
}\datatableGammaTimeSplitSemi

\pgfplotstableread{
0.25	152.7891393
0.5	119.4746678
1	92.82543612
2	81.59262156
4	58.12738442
}\datatableGammaTimeSplit

\pgfplotstableread{
0.25	122.1322029
0.5	83.54054856
1	60.85557985
2	49.22734928
4	39.25029492
}\datatableGammaTimeFSSemi

\pgfplotstableread{
0.25	164.4478703
0.5	133.1574991
1	100.1371615
2	90.08925366
4	65.02649927
}\datatableGammaTimeFS

  \begin{tikzpicture}
   \pgfplotsset{ybar stacked, ymin=15.000, ymax=200.000, width=\textwidth, enlargelimits=0.1,
   grid=both, grid style={line width=.2pt, draw=gray!20}, 
   xmin=0, xmax = 4,
 }

 \begin{axis}
 [
 ylabel={Total simation time}, 
 xticklabels={0.25, 0.5, 1, 2, 4},
 xlabel = {Surface tension - $\gamma$},
 ytick={20, 60, 100, 140, 180}, 
 height=5cm,
 xtick=data,
 bar width=8pt,
 bar shift=-25pt, 
 height=5cm,
 scaled y ticks = false,
 ]  
 
 \pgfplotsinvokeforeach {1}{
     \addplot[color=neworange1, fill=neworange1] table [x expr=\coordindex, y index=#1] {\datatableGammaTimeMonoSemi};}
     
\end{axis} 

 \begin{axis}
 [
 bar width=8pt,
 bar shift=-15pt, 
 xtick=\empty,
 ytick=\empty, 
 height=5cm,
 ]  
 
 \pgfplotsinvokeforeach {1}{
     \addplot[color=newblue1, fill=newblue1] table [x expr=\coordindex, y index=#1] {\datatableGammaTimeMono};}
\end{axis} 

 \begin{axis}
 [
 bar width=8pt,
 bar shift=-5pt, 
 xtick=\empty,
 ytick=\empty, 
 height=5cm,
 ]  
 
 \pgfplotsinvokeforeach {1}{
     \addplot[color=newgreen1, fill=newgreen1] table [x expr=\coordindex, y index=#1] {\datatableGammaTimeSplitSemi};}
\end{axis} 

 \begin{axis}
 [
 bar width=8pt,
 bar shift=5pt, 
 xtick=\empty,
 ytick=\empty, 
 height=5cm,
 ]  
 
 \pgfplotsinvokeforeach {1}{
     \addplot[color=newpurple1, fill=newpurple1] table [x expr=\coordindex, y index=#1] {\datatableGammaTimeSplit};}
\end{axis} 

 \begin{axis}
 [
 bar width=8pt,
 bar shift=15pt, 
 xtick=\empty,
 ytick=\empty, 
 height=5cm,
 ]  
 
 \pgfplotsinvokeforeach {1}{
     \addplot[color=newred1, fill=newred1] table [x expr=\coordindex, y index=#1] {\datatableGammaTimeFSSemi};}
     
\end{axis} 

 \begin{axis}
 [
 bar width=8pt,
 bar shift=25pt, 
 xtick=\empty,
 ytick=\empty, 
 height=5cm,
 ]  
 
 \pgfplotsinvokeforeach {1}{
     \addplot[color=neworange2, fill=neworange2] table [x expr=\coordindex, y index=#1] {\datatableGammaTimeFS};}
     
\end{axis} 

 \begin{axis}
 [
 bar width=0pt,
 bar shift=10pt, 
 legend columns=2,
 legend style={at={(0.8,0.98)}, anchor=north}, 
 xtick=\empty,
 legend cell align=left,
 legend style={
     /tikz/column 2/.style={
         column sep=-1.3pt,
         row sep=-20pt 
     },
     /tikz/column 6/.style={ 
         column sep=-1.3pt,
         row sep=-20pt 
     }},
 ytick=\empty,
 height=5cm,
 ]  
 \pgfplotsinvokeforeach {1}{
     \addplot[color=neworange1, fill=neworange1] table [x expr=\coordindex, y index=#1] {\datatableEmpty};}
 \pgfplotsinvokeforeach {1}{
     \addplot[color=newblue1, fill=newblue1] table [x expr=\coordindex, y index=#1] {\datatableEmpty};}
 \pgfplotsinvokeforeach {1}{
     \addplot[color=newgreen1, fill=newgreen1] table [x expr=\coordindex, y index=#1] {\datatableEmpty};}
 \pgfplotsinvokeforeach {1}{
     \addplot[color=newpurple1, fill=newpurple1] table [x expr=\coordindex, y index=#1] {\datatableEmpty};}
 \pgfplotsinvokeforeach {1}{
     \addplot[color=newred1, fill=newred1] table [x expr=\coordindex, y index=#1] {\datatableEmpty};}
 \pgfplotsinvokeforeach {1}{
     \addplot[color=neworange2, fill=neworange2] table [x expr=\coordindex, y index=#1] {\datatableEmpty};}
     
 \legend{
 Semi-impl. Mono, 
 Mono, 
 Semi-impl.  Split,
Split,
 Semi-impl. FS,
 FS,
 }
 \end{axis}

 \end{tikzpicture}

 \caption{Total computational time for simulations with different surface tension parameters.}
  \label{fig:gammatime}
 \end{subfigure}
 \caption{''Semi-impl. Mono'' refers to a monolithic Newton solver applied to the discrete system \eqref{eq:ch1split_sd}--\eqref{eq:pressuresplit_sd}, ''Semi-impl. Split'' refers to the scheme \eqref{eq:ch1split_alter_sd}--\eqref{eq:pressuresplit_alter_sd}, and ''Semi-impl. FS'' refers to a three-way decoupling solver, sequentially solving the Cahn-Hilliard, the elasticity and the flow subsystems, for the semi-implicit time discretization \eqref{eq:ch1split_sd}--\eqref{eq:pressuresplit_sd}. On the other hand, ''Mono'', ''Split'' and ''FS'', refer to the same solution strategies but applied to the discrete system arising from discretizing with the implicit Euler method and the Eyre convex-concave split \cite{Eyre1998} of the double-well potential.}
 \label{fig:gamma}
\end{figure}

\subsubsection{Dependence on swelling parameter $\xi$}
In Figure~\ref{fig:xi} we present an equivalent study, but with varying values of the swelling parameter $\xi$. The remaining material and simulation parameters are found in Table~\ref{tab:simulation_parameters}, specifically $\gamma = 1$. Here, we note that increasing the swelling parameter increases the computational challenge for the (non-)linear solvers. We expect that this is due to the same reasoning as for the surface tension parameter; namely that the coupling strength increases with increasing swelling parameter. Notice, in particular, that for the largest value of the swelling parameter, we failed to get convergence for the implicit monolithic Newton solver, which gives evidence of the robustness of both the semi-implicit time discretization and the decoupling approaches. In general, the results are similar to those for the study with varying surface tension parameters.

\pgfplotstableread{
0 0
1 0
80 0
}\datatableEmpty

\pgfplotstableread{
0.0625 202
0.125 202
0.25 202
0.5 264
}\datatableXiIterMonoSemi

\pgfplotstableread{
0.0625 203
0.125 203
0.25 284
0.5 0
}\datatableXiIterMono

\pgfplotstableread{
0.0625 211
0.125 301
0.25 401
0.5 571
}\datatableXiIterSplitSemi

\pgfplotstableread{
0.0625 257
0.125 332
0.25 680
0.5 994
}\datatableXiIterSplit

\pgfplotstableread{
0.0625 211
0.125 301
0.25 401
0.5 564
}\datatableXiIterFSSemi

\pgfplotstableread{
0.0625 254
0.125 330
0.25 677
0.5 985
}\datatableXiIterFS

\begin{figure}
\centering
\hspace{-0.79cm}
\begin{subfigure}{0.9\linewidth}
  \begin{tikzpicture}
   \pgfplotsset{ybar stacked, ymin=200.000, ymax=1100.000, width=\textwidth, enlargelimits=0.1,
   grid=both, grid style={line width=.2pt, draw=gray!20}, 
   xmin=0, xmax = 3,
 }
 
 \begin{axis}
 [
 ylabel={Total \# iterations}, 
 xticklabels={0.0625, 0.125, 0.25, 0.5},
 xlabel = {Swelling parameter - $\xi$},
 ytick={200, 600, 1000}, 
 height=5cm,
 xtick=data,
 bar width=8pt,
 bar shift=-25pt, 
 height=5cm,
 scaled y ticks = false,
 ]  
 
 \pgfplotsinvokeforeach {1}{
     \addplot[color=neworange1, fill=neworange1] table [x expr=\coordindex, y index=#1] {\datatableXiIterMonoSemi};}
     
\end{axis} 

 \begin{axis}
 [
 bar width=8pt,
 bar shift=-15pt, 
 xtick=\empty,
 ytick=\empty, 
 height=5cm,
 ]  
 
 \pgfplotsinvokeforeach {1}{
     \addplot[color=newblue1, fill=newblue1] table [x expr=\coordindex, y index=#1] {\datatableXiIterMono};}
\end{axis} 

 \begin{axis}
 [
 bar width=8pt,
 bar shift=-5pt, 
 xtick=\empty,
 ytick=\empty, 
 height=5cm,
 ]  
 
 \pgfplotsinvokeforeach {1}{
     \addplot[color=newgreen1, fill=newgreen1] table [x expr=\coordindex, y index=#1] {\datatableXiIterSplitSemi};}
\end{axis} 

 \begin{axis}
 [
 bar width=8pt,
 bar shift=5pt, 
 xtick=\empty,
 ytick=\empty, 
 height=5cm,
 ]  
 
 \pgfplotsinvokeforeach {1}{
     \addplot[color=newpurple1, fill=newpurple1] table [x expr=\coordindex, y index=#1] {\datatableXiIterSplit};}
\end{axis} 

 \begin{axis}
 [
 bar width=8pt,
 bar shift=15pt, 
 xtick=\empty,
 ytick=\empty, 
 height=5cm,
 ]  
 
 \pgfplotsinvokeforeach {1}{
     \addplot[color=newred1, fill=newred1] table [x expr=\coordindex, y index=#1] {\datatableXiIterFSSemi};}
     
\end{axis} 

 \begin{axis}
 [
 bar width=8pt,
 bar shift=25pt, 
 xtick=\empty,
 ytick=\empty, 
 height=5cm,
 ]  
 
 \pgfplotsinvokeforeach {1}{
     \addplot[color=neworange2, fill=neworange2] table [x expr=\coordindex, y index=#1] {\datatableXiIterFS};}
     
\end{axis} 

 \begin{axis}
 [
 bar width=0pt,
 bar shift=10pt, 
 legend columns=2,
 legend style={at={(0.2,0.95)}, anchor=north}, 
 xtick=\empty,
 legend cell align=left,
 legend style={
     /tikz/column 2/.style={
         column sep=-1.3pt,
         row sep=-20pt 
     },
     /tikz/column 6/.style={ 
         column sep=-1.3pt,
         row sep=-20pt 
     }},
 ytick=\empty,
 height=5cm,
 ]  
 \pgfplotsinvokeforeach {1}{
     \addplot[color=neworange1, fill=neworange1] table [x expr=\coordindex, y index=#1] {\datatableEmpty};}
 \pgfplotsinvokeforeach {1}{
     \addplot[color=newblue1, fill=newblue1] table [x expr=\coordindex, y index=#1] {\datatableEmpty};}
 \pgfplotsinvokeforeach {1}{
     \addplot[color=newgreen1, fill=newgreen1] table [x expr=\coordindex, y index=#1] {\datatableEmpty};}
 \pgfplotsinvokeforeach {1}{
     \addplot[color=newpurple1, fill=newpurple1] table [x expr=\coordindex, y index=#1] {\datatableEmpty};}
 \pgfplotsinvokeforeach {1}{
     \addplot[color=newred1, fill=newred1] table [x expr=\coordindex, y index=#1] {\datatableEmpty};}
 \pgfplotsinvokeforeach {1}{
     \addplot[color=neworange2, fill=neworange2] table [x expr=\coordindex, y index=#1] {\datatableEmpty};}
     
 \legend{
 Semi-impl. Mono, 
 Mono, 
 Semi-impl.  Split,
  Split,
 Semi-impl. FS,
 FS,
 }
 \end{axis}

 \end{tikzpicture}
 
 \caption{Total number of iterations for simulations with different swelling parameters.}
 \label{fig:xiiter}
 \end{subfigure}
 \begin{subfigure}{0.9\linewidth}
     \pgfplotstableread{
0 0
1 0
80 0
}\datatableEmpty

\pgfplotstableread{
0.0625 80.10626078
0.125 82.33554316
0.25 81.32613254
0.5 109.4604158
}\datatableXiTimeMonoSemi

\pgfplotstableread{
0.0625 81.35493159
0.125 82.04005098
0.25 113.3371298
0.5 0
}\datatableXiTimeMono

\pgfplotstableread{
0.0625 18.38070607
0.125 25.24320793
0.25 33.69879818
0.5 53.71665263
}\datatableXiTimeSplitSemi

\pgfplotstableread{
0.0625 20.01111579
0.125 27.31061411
0.25 55.76160026
0.5 88.40253735
}\datatableXiTimeSplit

\pgfplotstableread{
0.0625 20.82348657
0.125 27.03825688
0.25 38.61681962
0.5 60.45952535
}\datatableXiTimeFSSemi

\pgfplotstableread{
0.0625 21.01718736
0.125 27.59564543
0.25 62.48066998
0.5 99.8401587
}\datatableXiTimeFS

  \begin{tikzpicture}
   \pgfplotsset{ybar stacked, ymin=15.000, ymax=150.000, width=\textwidth, enlargelimits=0.1,
   grid=both, grid style={line width=.2pt, draw=gray!20}, 
   xmin=0, xmax = 3,
 }
 
 \begin{axis}
 [
 ylabel={Total simation time}, 
 xticklabels={0.0625, 0.125, 0.25, 0.5},
 xlabel = {Swelling parameter - $\xi$},
 ytick={20, 60, 100, 140}, 
 height=5cm,
 xtick=data,
 bar width=8pt,
 bar shift=-25pt, 
 height=5cm,
 scaled y ticks = false,
 ]  
 
 \pgfplotsinvokeforeach {1}{
     \addplot[color=neworange1, fill=neworange1] table [x expr=\coordindex, y index=#1] {\datatableXiTimeMonoSemi};}
     
\end{axis} 

 \begin{axis}
 [
 bar width=8pt,
 bar shift=-15pt, 
 xtick=\empty,
 ytick=\empty, 
 height=5cm,
 ]  
 
 \pgfplotsinvokeforeach {1}{
     \addplot[color=newblue1, fill=newblue1] table [x expr=\coordindex, y index=#1] {\datatableXiTimeMono};}
\end{axis} 

 \begin{axis}
 [
 bar width=8pt,
 bar shift=-5pt, 
 xtick=\empty,
 ytick=\empty, 
 height=5cm,
 ]  
 
 \pgfplotsinvokeforeach {1}{
     \addplot[color=newgreen1, fill=newgreen1] table [x expr=\coordindex, y index=#1] {\datatableXiTimeSplitSemi};}
\end{axis} 

 \begin{axis}
 [
 bar width=8pt,
 bar shift=5pt, 
 xtick=\empty,
 ytick=\empty, 
 height=5cm,
 ]  
 
 \pgfplotsinvokeforeach {1}{
     \addplot[color=newpurple1, fill=newpurple1] table [x expr=\coordindex, y index=#1] {\datatableXiTimeSplit};}
\end{axis} 

 \begin{axis}
 [
 bar width=8pt,
 bar shift=15pt, 
 xtick=\empty,
 ytick=\empty, 
 height=5cm,
 ]  
 
 \pgfplotsinvokeforeach {1}{
     \addplot[color=newred1, fill=newred1] table [x expr=\coordindex, y index=#1] {\datatableXiTimeFSSemi};}
     
\end{axis} 

 \begin{axis}
 [
 bar width=8pt,
 bar shift=25pt, 
 xtick=\empty,
 ytick=\empty, 
 height=5cm,
 ]  
 
 \pgfplotsinvokeforeach {1}{
     \addplot[color=neworange2, fill=neworange2] table [x expr=\coordindex, y index=#1] {\datatableXiTimeFS};}
     
\end{axis} 

 \begin{axis}
 [
 bar width=0pt,
 bar shift=10pt, 
 legend columns=2,
 legend style={at={(0.2,0.95)}, anchor=north}, 
 xtick=\empty,
 legend cell align=left,
 legend style={
     /tikz/column 2/.style={
         column sep=-1.3pt,
         row sep=-20pt 
     },
     /tikz/column 6/.style={ 
         column sep=-1.3pt,
         row sep=-20pt 
     }},
 ytick=\empty,
 height=5cm,
 ]  
 \pgfplotsinvokeforeach {1}{
     \addplot[color=neworange1, fill=neworange1] table [x expr=\coordindex, y index=#1] {\datatableEmpty};}
 \pgfplotsinvokeforeach {1}{
     \addplot[color=newblue1, fill=newblue1] table [x expr=\coordindex, y index=#1] {\datatableEmpty};}
 \pgfplotsinvokeforeach {1}{
     \addplot[color=newgreen1, fill=newgreen1] table [x expr=\coordindex, y index=#1] {\datatableEmpty};}
 \pgfplotsinvokeforeach {1}{
     \addplot[color=newpurple1, fill=newpurple1] table [x expr=\coordindex, y index=#1] {\datatableEmpty};}
 \pgfplotsinvokeforeach {1}{
     \addplot[color=newred1, fill=newred1] table [x expr=\coordindex, y index=#1] {\datatableEmpty};}
 \pgfplotsinvokeforeach {1}{
     \addplot[color=neworange2, fill=neworange2] table [x expr=\coordindex, y index=#1] {\datatableEmpty};}
     
 \legend{
 Semi-impl. Mono, 
 Mono, 
 Semi-impl.  Split,
  Split,
 Semi-impl. FS,
 FS,
 }
 \end{axis}

 \end{tikzpicture}
 
 \caption{Total computational time for simulations with different swelling parameters.}
 \label{fig:xitime}
 \end{subfigure}
 \caption{''Semi-impl. Mono'' refers to a monolithic Newton solver applied to the discrete system \eqref{eq:ch1split_sd}--\eqref{eq:pressuresplit_sd}, ''Semi-impl. Split'' refers to the scheme \eqref{eq:ch1split_alter_sd}--\eqref{eq:pressuresplit_alter_sd}, and ''Semi-impl. FS'' refers to a three-way decoupling solver, sequentially solving the Cahn-Hilliard, the elasticity and the flow subsystems, for the semi-implicit time discretization \eqref{eq:ch1split_sd}--\eqref{eq:pressuresplit_sd}. On the other hand, ''Mono'', ''Split'' and ''FS'', refer to the same solution strategies but applied to the discrete system arising from discretizing with the implicit Euler method and the Eyre convex-concave split \cite{Eyre1998} of the double-well potential.}
 \label{fig:xi}
\end{figure}

\subsubsection{Concerning the evolution of energy}
In Figure~\ref{fig:energy_plot} the evolution of the free energy is plotted over time for both of the previous experiments employing the semi-implicit time discretization \eqref{eq:model_disc_sd}. In general, we observe that the trend is a decreasing free energy. However, there are some small fluctuations happening for the cases with $\gamma = 0.25$ and $\xi = 0.5$, which correspond to the highest coupling strength respectively. 
This is in line with Remark~\ref{rem:disc_ener_diss_sd}. One means of remedy to obtain time-step control over the strain-related terms is to add regularization to the model by incorporating viscoelastic effects as done for the analysis in \cite{Fritz2024, Garcke2025, Riethmueller2025, Abels2025} and the numerical scheme in \cite{Brunk2025}. However, this relaxation alters the model formulation and it is beyond the scope of this paper to investigate it further.

\begin{figure}
    \centering
    \begin{subfigure}{0.45\linewidth}
        \includegraphics[width=\linewidth]{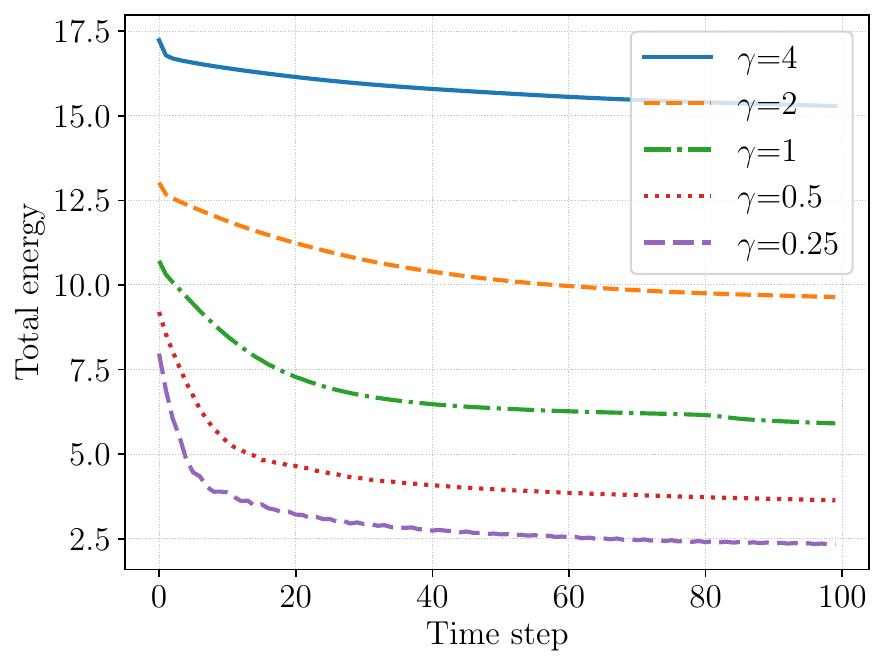}
    \end{subfigure}
    \begin{subfigure}{0.437\linewidth}
        \includegraphics[width = \linewidth]{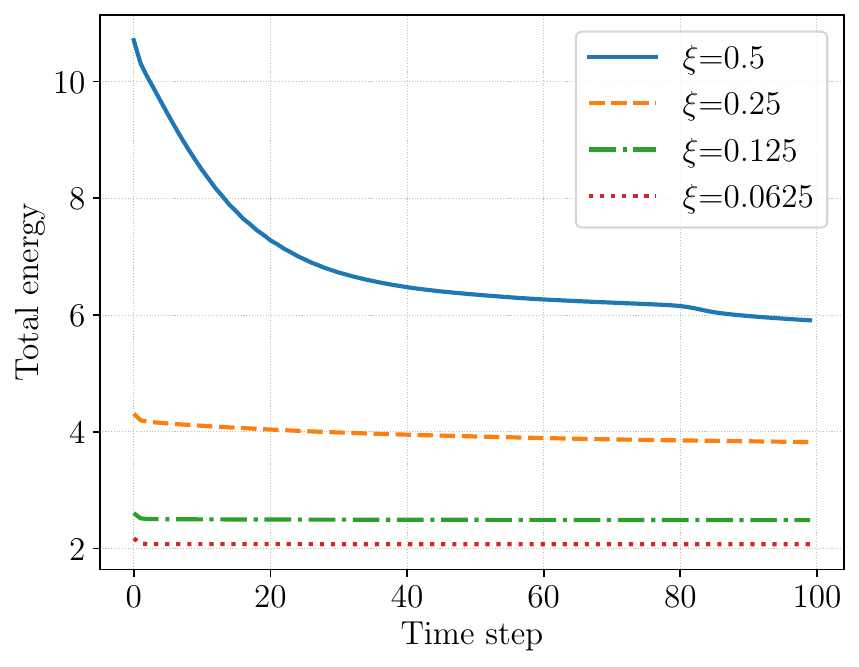}
    \end{subfigure}
    \caption{Total energy for each time step for varying surface tension (left) and swelling (right) parameters. Depicted are the energies calculated for the semi-implicit time discretization \eqref{eq:model_disc_sd}, using the alternating minimization method \eqref{eq:model_disc_alter_sd} to solve the systems in each time step.}
    \label{fig:energy_plot}
\end{figure}

\section{Conclusions} \label{sec:conclusions}
In this paper, we presented a new semi-implicit time discretization for the Cahn-Hilliard-Biot equations that is tailor-made to give nonlinear discrete systems suitable for general linearization methods. Moreover, we proved that alternating minimization, with respect to the Cahn-Hilliard and the Biot subsystems, converges when applied to solve these discrete systems, with quite general spatial discretizations. We round off the paper with a short numerical study of the introduced time discretization and iterative decoupling methods, showing the promise of the proposed solution strategy.

Possible further directions for research include the development of tailor-made preconditioned iterative solvers replacing the default linear solvers (from PETSc) used so far and the implementation of Anderson acceleration \cite{Anderson1965, Storvik2023} to accelerate the fixed-point-like iteration schemes. 

\medskip
\paragraph{Acknowledgement}
Funded in part by Deutsche Forschungsgemeinschaft (DFG, German Research Foundation) under Germany's Excellence Strategy - EXC 2075 – 390740016. CR acknowledges the support by the Stuttgart Center for Simulation Science (SC SimTech).

\appendix
\counterwithin{lemma}{section}
\renewcommand{\thelemma}{\arabic{lemma}\thesection}

\section{Abstract theory on the convergence of alternating minimization}
We refer to \cite{Both2022, Both2019} for abstract theory on the convergence of alternating minimization schemes and its application to thermo-poro-visco-elastic processes in porous media. The following lemma recapitulates the appropriate result from \cite{Both2022} in terms of the present article.

\begin{lemma}
Assume that there exist (semi-)norms $\lVert (\cdot,\cdot,\cdot) \rVert: \mathcal{V}_{h,0}^\mathrm{ch} \times \mathcal{V}_h^{\bm u} \times \mathcal{V}_{h,0}^{\mathrm{f}} \rightarrow \mathbb{R}_0^+$, $\lVert \cdot \rVert_\mathrm{ch}: \mathcal{V}_{h,0}^\mathrm{ch} \rightarrow \mathbb{R}_0^+$, and $\lVert (\cdot,\cdot) \rVert_\mathrm{b}: \mathcal{V}_h^{\bm u} \times \mathcal{V}_{h,0}^{\mathrm{f}} \rightarrow \mathbb{R}_0^+$, related by the inequalities
\begin{align}
 \lVert (\pf_h, \bm u_h, \theta_h) \rVert^2 \geq \beta_\mathrm{ch} \lVert \pf_h \rVert_\mathrm{ch}^2
 \label{eq:rel_norm_ch}
\end{align}
and
\begin{align}
 \lVert (\pf_h, \bm u_h, \theta_h) \rVert^2 \geq \beta_\mathrm{b} \lVert (\bm u_h, \theta_h) \rVert_\mathrm{b}^2
 \label{eq:rel_norm_b}
\end{align}
for all $\pf_h \in \mathcal{V}_{h,0}^\mathrm{ch}$, $\bm u_h \in \mathcal{V}_h^{\bm u}$ and $\theta_h \in \mathcal{V}_{h,0}^{\mathrm{f}}$, and some constants $\beta_\mathrm{ch}, \beta_\mathrm{b} \geq 0$. Furthermore, let the potential $\calH: \mathcal{\bar V}_h^{\mathrm{ch},n} \times \mathcal{V}_h^{\bm u} \times \mathcal{\bar V}_h^{\mathrm{f},n} \rightarrow \mathbb{R}$ be given. \\
If
\begin{itemize}
 \item $\calH$ is (quasi-)strongly convex with respect to the norm $\lVert (\cdot,\cdot,\cdot) \rVert$ with convexity constant $\sigma \geq 0$, i.e.,
 \begin{align}
  \Big(\delta_{(\pf,\bm u,\theta)}\calH\!\left(\pf_h^1, \bm u_h^1, \theta_h^1\right) - \delta_{(\pf,\bm u,\theta)}\calH\!\left(\pf_h^2, \bm u_h^2, \theta_h^2\right), \left(\pf_h^1 - \pf_h^2, \bm u_h^1 - \bm u_h^2, \theta_h^1 - \theta_h^2\right)\Big) \geq \sigma \big\lVert \left(\pf_h^1 - \pf_h^2, \bm u_h^1 - \bm u_h^2, \theta_h^1 - \theta_h^2\right) \big\rVert^2
  \label{eq:H_conv}
 \end{align}
 for all $\pf_h^1, \pf_h^2 \in \mathcal{\bar V}_h^{\mathrm{ch},n}$, $\bm u_h^1, \bm u_h^2 \in \mathcal{V}_h^{\bm u}$, and $\theta_h^1, \theta_h^2 \in \mathcal{\bar V}_h^{\mathrm{f},n}$,
\end{itemize}
and
\begin{itemize}
 \item the variational derivatives of $\calH$ are Lipschitz continuous in the norms $\lVert \cdot \rVert_\mathrm{ch}$ and $\lVert (\cdot,\cdot) \rVert_\mathrm{b}$ respectively, i.e., there exist constants $L_\mathrm{ch}, L_\mathrm{b} > 0$ such that
 \begin{align}
  \Big(\delta_\pf \calH\!\left(\pf_h^1, \bm u_h, \theta_h\right) - \delta_\pf \calH\!\left(\pf_h^2, \bm u_h, \theta_h\right), \pf_h^1 - \pf_h^2\Big) \leq L_\mathrm{ch} \big\lVert \pf_h^1 - \pf_h^2 \big\rVert_\mathrm{ch}^2
  \label{eq:Lip_cont_ch}
 \end{align}
 for all $\pf_h^1, \pf_h^2 \in \mathcal{\bar V}_h^{\mathrm{ch},n}$, $\bm u_h \in \mathcal{V}_h^{\bm u}$ and $\theta_h \in \mathcal{\bar V}_h^{\mathrm{f},n}$,
 and
 \begin{align}
  \Big(\delta_{(\bm u,\theta)}\calH\!\left(\pf_h, \bm u_h^1, \theta_h^1\right) - \delta_{(\bm u,\theta)}\calH\!\left(\pf_h, \bm u_h^2, \theta_h^2\right), \left(\bm u_h^1 - \bm u_h^2, \theta_h^1 - \theta_h^2\right)\Big) \leq L_\mathrm{b} \big\lVert \left(\bm u_h^1 - \bm u_h^2, \theta_h^1 - \theta_h^2\right) \big\rVert_b^2
  \label{eq:Lip_cont_b}
 \end{align}
 for all $\pf_h \in \mathcal{\bar V}_h^{\mathrm{ch},n}$, $\bm u_h^1, \bm u_h^2 \in \mathcal{V}_h^{\bm u}$, and $\theta_h^1, \theta_h^2 \in \mathcal{\bar V}_h^{\mathrm{f},n}$,
\end{itemize}
then the alternating minimization scheme converges in the sense that
\begin{align*}
 \calH\!\left(\pf_h^{n,i},\bm u_h^{n,i},\theta_h^{n,i}\right) - \calH\!\left(\pf_h^{n},\bm u_h^{n},\theta_h^{n}\right) \leq \left(1 - \frac{\sigma \beta_\mathrm{ch}}{L_\mathrm{ch}}\right) \left(1 - \frac{\sigma\beta_\mathrm{b}}{L_\mathrm{b}}\right) \Big( \calH\!\left(\pf_h^{n,i-1},\bm u_h^{n,i-1},\theta_h^{n,i-1}\right) - \calH\!\left(\pf_h^{n},\bm u_h^{n},\theta_h^{n}\right) \Big),
\end{align*}
where $\left(\pf_h^n,\bm u_h^n,\theta_h^n\right) \in \mathcal{\bar V}_h^{\mathrm{ch},n} \times \mathcal{V}_h^{\bm u} \times \mathcal{\bar V}_h^{\mathrm{f},n}$ is the minimizer of $\calH$.
\label{lem:aux_conv_result}
\end{lemma}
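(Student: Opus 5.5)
The plan is to follow the block-coordinate descent argument of \cite{Both2022}. One sweep of the scheme splits into two half-steps: first the Cahn-Hilliard block $\pf_h$ is minimized, then the Biot block $(\bm u_h,\theta_h)$. I would show that each half-step contracts the energy gap $\calH - \calH^\ast$, where $\calH^\ast:=\calH(\pf_h^n,\bm u_h^n,\theta_h^n)$, by the factor $(1-\sigma\beta_\mathrm{ch}/L_\mathrm{ch})$ and $(1-\sigma\beta_\mathrm{b}/L_\mathrm{b})$ respectively; multiplying the two factors gives the claim. Throughout, all admissible directions lie in the linear spaces $\mathcal{V}_{h,0}^\mathrm{ch}\times\mathcal{V}_h^{\bm u}\times\mathcal{V}_{h,0}^\mathrm{f}$, since differences of elements of the affine spaces $\mathcal{\bar V}_h^{\mathrm{ch},n}$ and $\mathcal{\bar V}_h^{\mathrm{f},n}$ lie there. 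This is why the (semi-)norms only need to be defined on those spaces.

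\emph{Step 1: two scalar inequalities from the hypotheses.} Integrating along segments, $\calH(y)-\calH(x)-\langle\delta\calH(x),y-x\rangle=\int_0^1\langle\delta\calH(x+s(y-x))-\delta\calH(x),y-x\rangle\,ds$.
\begin{itemize}
 \item Using \eqref{eq:Lip_cont_ch} when $x$ and $y$ differ only in $\pf_h$, and \eqref{eq:Lip_cont_b} when they differ only in $(\bm u_h,\theta_h)$, this gives the block descent bounds
 \begin{align*}
 \calH(y)\le \calH(x)+\langle\delta\calH(x),y-x\rangle+\tfrac{L_\mathrm{ch}}{2}\lVert y-x\rVert_\mathrm{ch}^2,
 \end{align*}
 and the analogous bound with $L_\mathrm{b}$ and $\lVert\cdot\rVert_\mathrm{b}$.
 \item Using \eqref{eq:H_conv} instead gives the lower bound $\calH(y)\ge\calH(x)+\langle\delta\calH(x),y-x\rangle+\tfrac{\sigma}{2}\lVert y-x\rVert^2$ for arbitrary $x,y$.
\end{itemize}

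\emph{Step 2: the Biot half-step.} Let $x=(\pf_h^{n,i},\bm u_h^{n,i-1},\theta_h^{n,i-1})$. Since $\pf_h^{n,i}$ minimizes in its block, $\langle\delta_\pf\calH(x),\cdot\rangle=0$ on $\mathcal{V}_{h,0}^\mathrm{ch}$.
\begin{itemize}
 \item Take $y$ to be the minimizer, and write $g_\mathrm{b}$ for the dual norm of $\delta_{(\bm u,\theta)}\calH(x)$ with respect to $\lVert\cdot\rVert_\mathrm{b}$. The strong convexity bound, together with \eqref{eq:rel_norm_b} and Young's inequality, gives
 \begin{align*}
 \calH^\ast\ge\calH(x)+\langle\delta_{(\bm u,\theta)}\calH(x),(\bm u^\ast,\theta^\ast)-(\bm u,\theta)\rangle+\tfrac{\sigma\beta_\mathrm{b}}{2}\lVert\cdot\rVert_\mathrm{b}^2\ge\calH(x)-\tfrac{1}{2\sigma\beta_\mathrm{b}}g_\mathrm{b}^2 .
 \end{align*}
 \item The exact Biot minimization does at least as well as the best step in the Biot block. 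By the $L_\mathrm{b}$-descent bound, that step yields $\calH(x^{n,i})\le\calH(x)-\tfrac{1}{2L_\mathrm{b}}g_\mathrm{b}^2$.
 \item Eliminating $g_\mathrm{b}^2$ between the two inequalities gives $\calH(x^{n,i})-\calH^\ast\le(1-\sigma\beta_\mathrm{b}/L_\mathrm{b})(\calH(x)-\calH^\ast)$.
\end{itemize}

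\emph{Step 3: the Cahn-Hilliard half-step.} This is the same argument with the roles of the blocks swapped, using \eqref{eq:rel_norm_ch} and $L_\mathrm{ch}$, starting from $x^{n,i-1}$. It requires $\delta_{(\bm u,\theta)}\calH(x^{n,i-1})=0$, which holds for $i\ge2$ because $x^{n,i-1}$ is the output of a Biot minimization.

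\emph{Main obstacle.} I expect the main obstacle to be the first sweep $i=1$, where the initialization $x^{n,0}=x^{n-1}$ need not be Biot-optimal. I would first try to handle this by interpreting the scheme, as in \cite{Both2022}, as starting with a Biot step: then every Cahn-Hilliard step begins at a Biot-stationary point. Alternatively, one can state the bound for $i\ge2$, which is what is needed for the linear rate.

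A secondary care point is that $\lVert\cdot\rVert$, $\lVert\cdot\rVert_\mathrm{ch}$ and $\lVert\cdot\rVert_\mathrm{b}$ are only semi-norms, so the dual quantities $g_\mathrm{b}$ (and its Cahn-Hilliard analogue) must be handled carefully. In the Young step I would avoid dual norms altogether and minimize explicitly over a scalar step $t\,d$ with $d=(\bm u^\ast,\theta^\ast)-(\bm u,\theta)$. This gives the contraction directly from the two scalar inequalities of Step 1, without inverting any norm.
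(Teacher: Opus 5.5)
Your argument is the standard Beck-type block-descent proof. It obtains a descent lemma by integrating the one-sided bounds \eqref{eq:Lip_cont_ch}, \eqref{eq:Lip_cont_b} along segments, and a gradient-domination bound from \eqref{eq:H_conv}, stationarity in the block just minimized, and \eqref{eq:rel_norm_ch}, \eqref{eq:rel_norm_b}. This is the result the paper invokes: it gives no proof of its own, only the reference to \cite{Both2022}. Your details are sound, including the scalar-step treatment of the semi-norms. With $A=-\langle\delta_{(\bm u,\theta)}\calH(x),d\rangle$ and $B=\lVert d\rVert_\mathrm{b}^2$, the contraction reduces to $(A-\sigma\beta_\mathrm{b}B)^2\ge 0$. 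If $B=0$, then $A=0$, since otherwise $\calH$ would be unbounded below along the line.

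The obstacle you flag at $i=1$ is real, and it is a defect of the statement rather than of your proof: if $x^{n,0}$ is not Biot-stationary, the claimed bound can fail. Take two scalar blocks, trivially embedded:
\begin{itemize}
\item $\calH(a,b)=\tfrac12a^2+\tfrac12b^2-\rho ab$ with $0<|\rho|<1$;
\item $\lVert(a,b)\rVert^2=a^2+b^2-2\rho ab$, $\lVert a\rVert_\mathrm{ch}=|a|$, $\lVert b\rVert_\mathrm{b}=|b|$.
\end{itemize}
All hypotheses hold with $\sigma=L_\mathrm{ch}=L_\mathrm{b}=1$ and $\beta_\mathrm{ch}=\beta_\mathrm{b}=1-\rho^2$, so the lemma claims the factor $\rho^4$. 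The scheme is $a\leftarrow\rho b$, $b\leftarrow\rho a$. From $x^{n,0}=(\rho b_0,b_0)$, which is Cahn-Hilliard-stationary but not Biot-stationary, one gets $x^{n,1}=(\rho b_0,\rho^2b_0)$. Then $\calH$ goes from $\tfrac12(1-\rho^2)b_0^2$ to $\tfrac12(1-\rho^2)\rho^2b_0^2$, a factor $\rho^2>\rho^4$. Every later sweep contracts by exactly $\rho^4$, so the product rate is sharp for $i\ge2$.

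Hence for $i=1$ only the single factor $1-\sigma\beta_\mathrm{b}/L_\mathrm{b}$ from your Step~2 is available. The correct statement is your second option, the bound for $i\ge2$. Your first option, starting with a Biot step, proves a result for a different scheme, since it changes the initialization $x^{n,0}=(\pf_h^{n-1},\bm u_h^{n-1},\theta_h^{n-1})$. That point is generally not Biot-stationary for $\calH_\tau^n$. For constant coefficients and $S_\mathrm{f}=0$, $\delta_\theta\calH_\tau^n$ there acts as $(p_h^{n-1},\cdot)$ on $\mathcal{V}_{h,0}^{\mathrm{f}}$, which vanishes only if $p_h^{n-1}$ is constant. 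Theorems~\ref{thm:conv_result} and~\ref{thm:conv_result_sd} therefore inherit the restriction to $i\ge2$.
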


\bibliographystyle{unsrt}
\bibliography{bibliography}

\end{document}